\documentclass{article}
\usepackage{graphicx} 
\usepackage{geometry}
\geometry{margin=1in}

\usepackage{ulem}
\usepackage{amsmath}
\usepackage{amssymb}
\usepackage{amsfonts}
\usepackage{multirow}
\usepackage{amsthm}
\usepackage{xcolor}
\usepackage{natbib}
\usepackage{comment}
\usepackage{enumitem}
\usepackage{hyperref}
\usepackage{algorithm}
\usepackage{algpseudocode}
\usepackage{rotating}
\usepackage{setspace}
\setcounter{page}{1}
\newtheorem{theorem}{Theorem}
\newtheorem{lemma}{Lemma}
\newtheorem{corollary}{Corollary}
\newtheorem{proposition}{Proposition}
\theoremstyle{definition}
\newtheorem{definition}{Definition}
\newtheorem{assumption}{A}
\newtheorem{example}{Example}
\newtheorem{remark}{Remark}

 {\begin{list}{}%
         {\setlength{\leftmargin}{#1}}%
         \item[]%
 }
 {\end{list}}


\newcommand{\iidsim}{\overset{iid}{\sim}} 

\newcommand{\di}{{\rm d}}

\newcommand{\seq}[1]{({#1})_{n \geq 0}}
\graphicspath{{figures/}}

\title{Bayesian Predictive Inference Beyond Martingales}
 \author{Marco Battiston$^{1}$ and Lorenzo Cappello$^{2,3}$\\
 		$^1$School of Mathematical Sciences, Lancaster University\\
	$^2$ Department of Economics and Business, Universitat Pompeu Fabra\\
$^3$ Data Science Center, Barcelona School of Economics}

\begin{document}

\def\spacingset#1{\renewcommand{\baselinestretch}%
	{#1}\small\normalsize} \spacingset{1}

	\maketitle
	
	\begin{abstract}
There is a growing interest in the so-called Bayesian Predictive Inference approach, which allows to perform Bayesian inference without specifying the likelihood and prior of the model, or the need of any MCMC. Instead, only a sequence of predictive distributions for the observations is required, and inference on the unknown estimand can be performed, cheaply in parallel, using bootstrap-type schemes. 
Understanding which classes of predictive distributions can be used within this framework, is still a key open question. We relax commonly used probabilistic assumptions on the observations, namely exchangeability and conditional identical distribution, and on their predictive distributions, being measure-valued martingales, by introducing the new class of Almost Conditional Identically Distributed (a.c.i.d.) random variables. This class assumes that the predictive distributions are measure-valued almost supermartingales, and is parametrized by a sequence of parameters $(\xi_{n})_{n\geq 0}$, which regulate the decay of conditional dependence among future observations. Under mild summability assumptions on $(\xi_{n})_{n\geq 0}$, the resulting sequence of observations is shown to be asymptotically exchangeable, hence amenable to Bayesian Predictive Inference 
techniques. A.c.i.d. random variables arise naturally in recursive algorithms, and include classic approaches in Statistics and Learning Theory, such as kernel estimators, and more novel ones, such as the parametric Bayesian bootstraps. 
	\end{abstract}

\begin{keywords}
Almost Supermartingales, Conditional identical distribution, Kernel Methods, Martingale Posterior, Parametric Bootstrap, Predictive Resampling.
\end{keywords}

\section{Introduction} \label{sec:intro}

\subsection{Motivation}

\indent Prediction lies at the foundation of Bayesian Statistics. Traditionally, much research has focused on the relationship between a specified statistical model and prior, and the resulting predictive distribution of future observations given current information; see \cite{fortini2024exchangeability} for a recent review. However, when the primary goal is inference, a (Bayesian) statistician usually specifies a statistical model and a prior, and then, given a sample, computes the corresponding posterior distribution. Predictive distributions do not play a role in this process; possibly except for the necessary checks of model fit \citep[e.g. predictive checks,][]{gelman1996posterior}. A recent stream of literature is taking a new stance, suggesting that it is possible to do Bayesian inference by simply specifying a \textit{predictive rule} - how to update the prediction as new information becomes available - fully bypassing the need for models and priors \citep{fortini2020quasi,berti2021class,fong2021martingale,holmes2023statistical,fortini2023prediction}. The peculiarity of this approach is that prediction is done with inference in mind rather than forecasting. This emerging area, often referred to as \textit{Bayesian Predictive inference} or \textit{modeling}, is the focus of the paper. 


Several reasons support the popularity of this growing research area. First, it bypasses the need to specify a statistical model and subjective probabilities on latent parameters, which may lack interpretability or be high-dimensional. Instead, it relies on a probabilistic learning rule on the observables - an assumption that can be formally assessed, and 
aligns with de Finetti's view that probabilistic statements should be made on observable events. 
Second, it is computationally appealing: one can avoid MCMC methods, and rely on numerical methods that speed up inference \citep{fong2021martingale,fortini2020quasi}. 
Lastly, it has the potential to bridge the two cultures —statistical inference and prediction \citep{breiman2001statistical} — by enabling the use of machine learning algorithms for inference. However, the current framework requires stringent assumptions on the predictive rule, and many popular algorithms violate these. This paper proposes a relaxation of these assumptions to enlarge the class of algorithms admissible in predictive inference, thus paving the way for this bridge.

Let $\theta$ be an estimand and $x_{1:n}$ a sample. The key observation underlying predictive inference is that the uncertainty described by a posterior distribution $\mathbb{P}(\theta|x_{1:n})$ 
can be equivalently described through the predictive distribution of the yet-to-be-seen observations $\mathbb{P}(X_{n+1:\infty}|x_{1:n})$, or equivalently a sequence of one-step-ahead distributions $(\mathbb{P}(X_{n+m}|x_{1:n+m-1}))_{m \geq 1}$. The scheme to move from prediction to inference is called \textit{Predictive Resampling}, which is a generalization of the Bayesian boostrap \citep{rubin1981bayesian}, where one samples many synthetic data from $\mathbb{P}(X_{n+1:n+M}|x_{1:n})$ for some large $M$, and then for each synthetic sample computes an estimate $\hat{\theta}=f(x_{1:n+M})$, where $f$ is an appropriately chosen estimator which crucially uses both the original data and the synthetic one. The resulting empirical distribution of $\hat{\theta}$ is an approximation to the so-called \textit{Martingale Posterior} \citep{fong2021martingale}; see Subsection~\ref{sec:pred_resampl}. Crucially, under suitable conditions, this posterior is fully equivalent to the one obtained with the usual application of Bayes rule  starting with a likelihood and a prior. 

What conditions the predictive rule needs to satisfy to get a valid martingale posterior is one of the key research questions. Given a filtration $\mathcal{G}=(\mathcal{G}_n)_{n \geq 1}$ (for example, the natural filtration generated by $(X_{n})_{n \geq 1}$), \cite{fong2021martingale} assume that the predictive distributions are measure-valued martingales, i.e., $(\mathbb{P}(X_{n+1} \in A |\mathcal{G}_n))_{n \geq 0}$ is a martingale for all Borel set $A$. 
The same condition appears also in \cite{fortini2020quasi}. The martingale assumption has many desirable implications.  It represents a departure from the assumption of exchangeability on $(X_n)_{n\geq 1}$ to the more general class of \textit{Conditionally Identically Distributed} (c.i.d.) sequences \citep{kal88,berti2004limit}. This enlarges the class of predictive distributions and models that can be considered, moving away from exchangeable Bayesian predictive schemes \citep[e.g.][]{fortini2020quasi}. Such a departure comes at no substantial cost: while the convergence of predictive distribution for exchangeable sequences is given by de Finetti's representation theorem, for c.i.d. sequences the random sequence  $(\mathbb{P}(X_{n+1} \in \cdot |\mathcal{G}_n))_{n \geq 0}$ converges weakly almost surely (a.s.) by martingale arguments \citep[see][]{ald85}. Crucially,  if $(X_n)_{n\geq 1}$ is c.i.d., it is also \textit{Asymptotically Exchangeable}, meaning that it behaves in the limit as an exchangeable sequence \citep{ald85}. This property justifies the existence of a statistical model and a prior distribution via the representation theorem, and makes the martingale posterior a valid approximation for $\mathbb{P}(\theta|x_{1:n})$. 
What is appealing is that there is no need to specify a model and a prior or even assume exchangeability.

The martingale assumption has other desirable features; see \cite{fong2021martingale}. 
Nevertheless, what matters is the almost sure weak convergence of $(\mathbb{P}(X_{n+1} \in \cdot |\mathcal{G}_n))_{n \geq 0}$ and the resulting asymptotic exchangeability of $(X_n)_{n\geq 1}$.  
It is becoming clear that the martingale assumption could be limiting, and the literature includes now many examples that violate this assumption; for example, the parametric Bayesian bootstrap \citep{holmes2023statistical}, an online logistic regression \citep{fortini2024exchangeability}, and predictive schemes driven by sample moments \citep{garelli2024asymptotics}. Several algorithms in learning theory and machine learning (\textit{e.g.} kernel estimators), if considered as predictive rules, do not satisfy the martingale property either. 



\subsection{Preview of contributions}

In this paper, 
we relax the martingale assumption on predictive distributions, by assuming instead that  the sequence of one-step-ahead predictive distributions $(\mathbb{P}(X_{n+1} \in A |\mathcal{G}_n))_{n \geq 0}$ forms an almost supermartingale, \citep{robbins1971convergence} (see Supplementary Material, Subsection 1.1, 
for background), for every set $A$. 
Informally, it means that the sequence is not a martingale but converges to one asymptotically. 
The most immediate consequence of the departure from martingales is that the resulting sequence $(X_n)_{n \geq 1}$ is not necessarily c.i.d..

In Section~\ref{sec:acid}, we define \textit{Almost Conditionally Identically Distributed} (a.c.i.d.) sequences, which generalize both c.i.d. and exchangeable sequences, and discuss a few characterizations. 
It is then shown in Theorem~\ref{thm:asym.exch} that, under mild conditions,  an a.c.i.d. sequence is asymptotically exchangeable. This is done by proving that any sequence of random measures that is an almost supermartingale converges weakly a.s. This result generalizes similar results for martingales \citep[e.g.][]{ald85}, and may be of general interest beyond the scope of this paper.  An implication is that any a.c.i.d. sequence satisfying these mild conditions can be used in predictive inference.

Section~\ref{sec:examples} is devoted to illustrations showcasing the benefits of doing Bayesian predictive inference with a.c.i.d. sequences. 
We consider predictive rules built using old ideas in Statistics and Machine Learning, and new ones. 
We consider the former class actually more interesting to illustrate how Bayesian predictive inference can incorporate ideas from fields centered around prediction into a formal inference framework. For example, we show that a sequence of predictive distributions defined by kernel estimates generates an a.c.i.d. sequence. 
A similar result holds for an online version of the Random Forest, exploiting the link between kernel methods and random forest \citep{breiman2000some}.
Lastly, another example is the Bayesian parametric bootstrap \citep{holmes2023statistical,fong2024asymptotics}, which satisfies the property under certain regularity condition. 


Although the illustrations in this paper primarily serve to demonstrate examples of a.c.i.d. schemes, many of them are already of independent interest as predictive methods. For instance, kernel-based predictive schemes offer notable advantages: they do not require evaluation on a numerical grid and can be less sensitive to the ordering of data, in contrast to copula-based methods \citep{hahn18,fong2021martingale} and predictive recursion approaches \citep{new98,fortini2020quasi}. In addition, framing kernels as a.c.i.d. procedures may be useful to interpret these algorithms in a proper statistical framework, as done by \cite{fortini2020quasi} for Newton's predictive recursion. Informally, our result suggests that there is a statistical model underlying kernel methods, thus addressing a question posed by \cite{west1991kernel}. In Section~\ref{sec:simulations}, we explore some numerical properties of these schemes, while leaving a more comprehensive investigation of kernel methods for predictive inference to future work. Section~\ref{sec:conclusion} provides concluding remarks.
 

\noindent \textbf{Notation.} In the rest of the paper, all random variables (r.v.s) are assumed to be defined on a common probability space $(\Omega,\mathcal{A},\mathbb{P})$, and $\mathcal{G}=(\mathcal{G}_{n})_{n\geq 0}$ denotes a filtration on it, with $\mathcal{G}_{0}=\{\emptyset,E\}$. $(X_{n})_{n\geq 1}$ denotes a sequence of random variables taking values on a Polish space $(E,\mathcal{E})$, and implicitly always assumed to be $\mathcal{G}$-adapted. 
For most of examples, $(E,\mathcal{E})$ can be taken as $(\mathbb{R}^{p},\mathcal{B}(\mathbb{R}^{p}))$. $X_{1:n}$ is used as shorthand notation for the vector $(X_{1},\ldots,X_{n})$. 
$\stackrel{\text{w}}{\to}$, $\stackrel{\text{d}}{\to}$ and $\stackrel{\text{a.s.}}{\to}$ stand for weak convergence of probability measures, convergence in distribution and almost sure convergence of r.v.s, respectively. $\text{TV}(\alpha_{1},\alpha_{2})= \sup_{A\in \mathcal{E}}|\alpha_{1}(A)-\alpha_{2}(A)|$ denotes the total variation distance between the probability measures $\alpha_{1}$ and $\alpha_{2}$.  
Finally, $\alpha_{n}(\omega,A) := \mathbb{P}\left( X_{n+1}\in A | \mathcal{G}_{n} \right)(\omega)$ denotes the one-step ahead predictive distribution, given $\mathcal{G}_{n}$. In $\alpha_{n}(\omega,A)$, we will often not highlight the dependence on $\omega$, and simply write $\alpha_{n}(A)$, and write $(\alpha_n)_{n \geq 0}$ for the random sequence $(\alpha_n(\cdot,\cdot))_{n \geq 0}$.

\section{Preliminaries}

\subsection{Conditionally Identically distributed Sequences}

Informally, a sequence of r.v.s is c.i.d if, at any point in time, given the current history of the process $\mathcal{G}_{n}$, future random variables are identically distributed. This form of probabilistic dependence has been thoroughly studied in \cite{kal88}
and \cite{berti2004limit}, and can be formally defined as follows, 

 \begin{definition}[C.i.d., Law] \label{def:cid.law}
    A sequence of r.v.s. $(X_{n})_{n\geq 1}$ is  \textit{Conditionally Identically Distributed (c.i.d.)} with respect to $\mathcal{G}$ if for all $k>n\geq 0$ and for all $A \in \mathcal{E}$
\begin{equation} \label{eq:cid}
\mathbb{P}\left( X_{k} \in A  |\mathcal{G}_{n}\right) = \mathbb{P} \left( X_{n+1} \in A | \mathcal{G}_{n}\right)  \ \ \ \ \ \ \ \  \text{a.s.} 
\end{equation}
\end{definition}

In Definition \ref{def:cid.law}, it is easy to check that it is actually enough to require that $\mathbb{P}\left( X_{n+2} \in A  |\mathcal{G}_{n}\right) = \mathbb{P} \left( X_{n+1} \in A | \mathcal{G}_{n}\right)$ for all $n\geq 0$ and $A\in \mathcal{E}$, i.e., $X_{n+1}$ and $X_{n+2}$ are identically distributed, conditionally to the current history $\mathcal{G}_{n}$. Moreover, using usual approximations with simple functions, it is possible to restate Definition~\ref{def:cid.law} with condition~\eqref{eq:cid} holding for expectations over all integrable functions $f:E\to \mathbb{R}$, i.e., $\mathbb{E}\left( f(X_{k})|\mathcal{G}_{n}\right) = \mathbb{E} \left( f(X_{n+1}) | \mathcal{G}_{n}\right)$, instead of just for indicator functions, $f(x)=\mathbb{I}(x\in A)$. Another, possibly less obvious, equivalent definition of c.i.d. r.v.s is through the martingale property of their predictive distributions.
\begin{definition}[C.i.d., Predictive]\label{def:cid.pred}
 A sequence of r.v.s. $(X_{n})_{n\geq 1}$ is \textit{c.i.d.} with respect to $\mathcal{G}$ if the sequence
   $ \left( \mathbb{P}\left(X_{n+1} \in A |\mathcal{G}_{n} \right) \right)_{n\geq 0}$
forms a $\mathcal{G}$-martingale, for each $A \in \mathcal{E}$.
\end{definition}

Informally, we will often write a sequence of predictive distributions is c.i.d. to mean that the corresponding sequence $(X_{n})_{n\geq 1}$ is c.i.d..

\subsection{Predictive Resampling}\label{sec:pred_resampl}

Predictive resampling generalizes the Bayesian bootstrap \citep{rubin1981bayesian} to arbitrary predictive schemes. As in the Introduction, let $\theta$ denote an estimand of interest, which may be either finite- or infinite-dimensional, and let $f$ be an estimator such that, given a sample $x_{1:n}$, an estimate for $\theta$ is given by $\hat{\theta}_n = f(x_{1:n})$. Both $\theta$ and $f$ are arbitrary. \cite{fong2021martingale} define the finite martingale posterior as follows:

\begin{definition} (Finite Martingale Posterior)
The finite martingale posterior is defined as
\begin{equation}\label{eq:martingale_posterior} \mathbb{P}(\hat{\theta}_{n+M} \in \cdot \mid \mathcal{G}_n) = \int \mathbb{I}(f(x_{1:n+M}) \in \cdot ) \mathbb{P} (\di x_{n+1:n+M} \mid \mathcal{G}_n). \end{equation} \end{definition}

The martingale posterior is obtained as the limit $M\to \infty$. Under the assumption that $(X_{n})_{n \geq 1}$ is exchangeable and $(\mathbb{P}(X_{n+1} \in \cdot \mid \mathcal{G}_{n}))_{n \geq 0}$ the corresponding sequence of predictive distributions, one can show that the martingale posterior coincides with the standard Bayesian posterior. This argument traces back to \cite{doob1949application}. If $(X_{n})_{n \geq 1}$ is c.i.d., such equivalence may not hold in finite samples but only asymptotically. Informally, this happens because $(X_{n})_{n \geq 1}$ behaves in the limit as an exchangeable sequence. We will  return to this point in the next section when we define asymptotic exchangeability.

In Bayesian predictive inference, rather than specifying a model and prior, one directly defines the sequence $(\mathbb{P}(X_{n+1} \in \cdot \mid \mathcal{G}_{n}))_{n \geq 0}$ and uses it to approximate the martingale posterior. 
Predictive resampling is a Monte Carlo method to do that. Specifically, one draws $B$ samples of size $M$, $X^{(i)}_{n+1:n+M} \sim \mathbb{P}(X_{n+1:n+M} \in \cdot \mid \mathcal{G}_n)$, for $i=1,\ldots,B$, and computes $(\hat{\theta}^{(i)}_{n+M})_{1:B}$, where $\hat{\theta}^{(i)}_{n+M}=f(x_{1:n+m}^{(i)})$. The empirical distribution of $(\hat{\theta}^{(i)}_{n+M})_{1:B}$ then serves as an approximation of $\mathbb{P}(\hat{\theta}_{n+M} \in \cdot \mid \mathcal{G}_n)$. Sampling $X_{n+1:n+M}$ can be performed sequentially, one step at a time, using the one-step-ahead predictive. If $(\mathbb{P}(X_{n+1} \in \cdot \mid \mathcal{G}_{n}))_{n \geq 0}$ is the empirical distribution, this scheme coincides with the Bayesian bootstrap. The key novelty is that we can move beyond the discreteness of the empirical distribution and employ continuous updates; for instance, using the Gaussian copula algorithm of \cite{hahn18}. The same numerical scheme can be used to scrutinize the underlying model and prior; see the application to Newton’s algorithm in \cite{fortini2020quasi} for details.

\section{Almost Conditionally Identically Distributed Random Variables } \label{sec:acid}

We relax Definition \ref{def:cid.pred} of c.i.d. r.v.s by allowing the sequence of predictive distributions 
to be an almost supermartingale \citep{robbins1971convergence}, and define Almost Conditionally Identically Distributed (a.c.i.d.) r.v.s. 


\begin{definition}[A.c.i.d., Predictive] \label{def:acid.pred}
    A sequence of r.v.s. $(X_{n})_{n\geq 1}$ is  $(\xi_{n})_{n\geq 0}$-a.c.i.d. with respect to $\mathcal{G}$  if the sequence
  $  \left( \mathbb{P}\left(X_{n+1} \in A |\mathcal{G}_{n} \right) \right)_{n\geq 0}$
    forms an almost $\mathcal{G}$-supermartingale  for all $A\in \mathcal{E}$, i.e., for all $n \geq 0$ and $A\in \mathcal{E}$, 
\begin{align} \label{eq:acid.pred}
    \mathbb{E}\left( \mathbb{P}(X_{n+2} \in A |\mathcal{G}_{n+1}) | \mathcal{G}_{n} \right) & \leq \mathbb{P}(X_{n+1} \in A |\mathcal{G}_{n}) +\xi_{n}  \ \ \ \ \ \ \ \  \text{a.s.,}
\end{align}
 where $(\xi_{n})_{n\geq 0}$ is a sequence of $\mathcal{G}$-adapted non-negative r.v.s.
\end{definition}

When $(\xi_{n})_{n\geq 0}$ and/or $\mathcal{G}$ are clear from the context, we will simply write that the sequence $(X_{n})_{n\geq 1}$ is a.c.i.d., instead of $(\xi_{n})_{n\geq 0}$-a.c.i.d. with respect to $\mathcal{G}$. Informally, we will often write a sequence of predictive distributions is a.c.i.d. to mean that the corresponding $(X_{n})_{n\geq 1}$ is a.c.i.d..

Notice that, because $\mathcal{G}_{n}\subset \mathcal{G}_{n+1}$, formula \eqref{eq:acid.pred} reduces to $\mathbb{P}(X_{n+2} \in A |\mathcal{G}_{n})  \leq \mathbb{P}(X_{n+1} \in A |\mathcal{G}_{n}) +\xi_{n}$. Since condition \eqref{eq:acid.pred} must hold for all $A\in \mathcal{E}$, by applying it to $A^{c}$, we obtain that it must also hold that $\mathbb{P}(X_{n+2} \in A |\mathcal{G}_{n})  \geq \mathbb{P}(X_{n+1} \in A |\mathcal{G}_{n}) - \xi_{n}$ for all $A\in \mathcal{E}$. 
Finally, using condition \eqref{eq:acid.pred} on both $A$ and $A^{c}$, it follows that, for $A\in \mathcal{E}$,  $| \mathbb{P}(X_{n+2} \in A |\mathcal{G}_{n})  - \mathbb{P}(X_{n+1} \in A |\mathcal{G}_{n}) | \leq \xi_{n}$, which then provides a natural generalization of Definition \ref{def:cid.law} to the a.c.i.d. case, in terms of total variation distance between $\mathbb{P}(X_{n+2} \in \cdot |\mathcal{G}_{n})$ and $\mathbb{P}(X_{n+1} \in \cdot |\mathcal{G}_{n})$.

\begin{definition}[A.c.i.d., Law] \label{def:acid.law}
    A sequence of r.v.s. $(X_{n})_{n\geq 1}$ is  $(\xi_{n})_{n\geq 0}$-a.c.i.d.  with respect to $\mathcal{G}$ if, for all $n\geq 0$, a.s.
    \begin{align*}
     \text{TV}\left(\mathbb{P}(X_{n+2} \in \cdot |\mathcal{G}_{n}),  \mathbb{P}(X_{n+1} \in \cdot |\mathcal{G}_{n}) \right) 
     \leq \xi_{n}  \ \ \ \ \ \ \ \ 
 \text{a.s.}
    \end{align*}
\end{definition}

The same argument that led to Definition~\ref{def:acid.law} justifies working with almost supermartingales:  requiring the predictive distributions to form either a supermartingale or a submartingale cannot generalize the definition of c.i.d. r.v.s, because in that case they would be both, and thus a martingale.

As already mentioned in the Introduction, the main requirement for a sequence of predictive distributions to apply the posterior resampling approach of \cite{fong2021martingale} is \textit{asymptotic exchangeability}. Theorem~\ref{thm:asym.exch} provides sufficient conditions on $(\xi_{n})_{n\geq 0}$ that ensure an a.c.i.d. sequence is asymptotically exchangeable. This is achieved by showing that any sequence of random measures forming an almost supermartingale converges weakly a.s., extending analogous results for martingales \citep[e.g. see][]{ald85,berti2004limit}. 
Given a.s. weak convergence, asymptotic exchangeability follows from Lemma 8.2 of \cite{ald85}.
The proof is in Section 2.1 of the Supplementary Material.

\begin{theorem} \label{thm:asym.exch}
    Let $(X)_{n\geq 1}$ be a $(\xi_{n})_{n\geq 0}$-a.c.i.d. sequence  and $\seq{\alpha_n(\cdot,\cdot)}$ the corresponding sequence of 1-step ahead predictive distributions. If  the sequence $(\xi_{n})_{n\geq 0}$  satisfies\begin{equation} \label{eq:xi.summ}
        \sum_{n=0}^\infty  \xi_n  < \infty \ \ \ \ \ \text{a.s.},
    \end{equation} then there exists a random probability measure $\alpha:\Omega \times \mathcal{A} \to [0,1]$ such that  \begin{equation*}
      \mathbb{P}(\omega\in \Omega: \alpha_n(\omega,\cdot)\stackrel{\text{w}}{\to}  \alpha(\omega,\cdot))=1.
      \end{equation*}
Moreover, $(X_{n})_{n\geq 1}$ is asymptotically exchangeable, i.e., as $n\to\infty$,
\begin{equation*}
    (X_{n},X_{n+1},\ldots) \stackrel{\text{d}}{\to} (Z_{1},Z_{2},\ldots)
\end{equation*}
where $Z=(Z_{1},Z_{2},\ldots)$ is an exchangeable sequence directed by $\alpha$.
\end{theorem}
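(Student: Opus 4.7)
The strategy is to split the statement into two pieces: (i) almost-sure weak convergence of the random predictive measures $(\alpha_n)$ to a random probability measure $\alpha$, and (ii) asymptotic exchangeability of $(X_n)_{n\geq 1}$. Part (ii) will follow from (i) by Lemma~8.2 of \cite{ald85}, which requires only a.s.\ weak convergence of the one-step-ahead predictives together with $\mathcal{G}$-adaptedness of $(X_n)$, and so transfers verbatim from the c.i.d.\ to the a.c.i.d.\ case. The real work lies in (i), which I would tackle by combining the almost-supermartingale convergence theorem of \cite{robbins1971convergence}, applied pointwise to real-valued functionals of $\alpha_n$, with a standard countable convergence-determining argument from the theory of random measures.

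For the pointwise step, fix $A\in\mathcal{E}$. The tower property together with Definition~\ref{def:acid.pred} and $\mathcal{G}_n\subseteq \mathcal{G}_{n+1}$ yields
\begin{equation*}
\mathbb{E}(\alpha_{n+1}(A)\mid \mathcal{G}_n) = \mathbb{E}\!\left(\mathbb{P}(X_{n+2}\in A\mid \mathcal{G}_{n+1})\mid \mathcal{G}_n\right) \leq \alpha_n(A) + \xi_n \quad \text{a.s.}
\end{equation*}
Since $0\leq \alpha_n(A)\leq 1$ and $\sum_n \xi_n<\infty$ a.s.\ by \eqref{eq:xi.summ}, the Robbins--Siegmund theorem gives $\alpha_n(A)\to L(A,\omega)$ a.s.\ for some $L(A,\cdot)\in[0,1]$. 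Applying the same bound to $A^{c}$ recovers the total variation inequality of Definition~\ref{def:acid.law}, from which, by approximating bounded measurable functions via simple functions, one obtains for each bounded measurable $f:E\to\mathbb{R}$ the estimate $|\mathbb{E}(\alpha_{n+1}(f)\mid \mathcal{G}_n)-\alpha_n(f)|\leq 2\|f\|_\infty\xi_n$, and thus (again by Robbins--Siegmund applied to $\alpha_n(f)+\|f\|_\infty$) a.s.\ convergence $\alpha_n(f)\to L(f,\omega)$.

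To promote these pointwise convergences to weak convergence of the random measures $\alpha_n$, choose a countable convergence-determining family $\{f_k\}_{k\geq 1}\subset C_b(E)$, which exists since $E$ is Polish (e.g.\ bounded Lipschitz functions built from a countable dense set). The intersection over $k$ of the full-probability events on which $\alpha_n(f_k)\to L(f_k)$ is a single set $\Omega_0$ of probability one; on $\Omega_0$, any weak accumulation point of $(\alpha_n(\omega,\cdot))$ is uniquely pinned down by the values $\{L(f_k,\omega)\}_k$. The main obstacle I anticipate is \emph{tightness}: since $E$ need not be compact, escape of mass must be ruled out to ensure that the limiting functional $f\mapsto L(f,\omega)$ corresponds to a bona fide probability measure $\alpha(\omega,\cdot)$. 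A natural route is to include in $\{f_k\}$ continuous approximations $g_m\uparrow 1$ supported on an exhausting family of compacts $K_m\uparrow E$, and then control $|L(g_m,\omega)-\alpha_0(g_m)|$ via a Doob-type decomposition of $\alpha_n(g_m)-\alpha_0(g_m)$ into a bounded martingale part plus a drift controlled pathwise by $2\sum_n \xi_n<\infty$; tightness of the initial law $\alpha_0$ on the Polish space $E$ then closes the argument. Once a.s.\ weak convergence $\alpha_n\stackrel{\text{w}}{\to}\alpha$ is secured, Aldous's Lemma~8.2 delivers the asymptotic exchangeability of $(X_n)_{n\geq 1}$ directed by $\alpha$ and completes the proof.
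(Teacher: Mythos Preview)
Your overall architecture matches the paper's: Robbins--Siegmund for pointwise convergence on a countable determining family, a separate a.s.\ tightness argument, and then Aldous's Lemma~8.2 for asymptotic exchangeability. The gap is in the tightness step. You propose to bound $|L(g_m,\omega)-\alpha_0(g_m)|$ via a Doob decomposition whose drift is controlled pathwise by $2\sum_n\xi_n(\omega)$; but this bound is a fixed random constant, independent of $m$, so from $\alpha_0(g_m)\to 1$ you cannot conclude $L(g_m,\omega)\to 1$ and hence cannot rule out loss of mass. The martingale part $M_\infty^{(m)}$ suffers from the same defect---nothing in your sketch forces it to vanish as $m\to\infty$. (As a minor aside, a general Polish space need not be $\sigma$-compact, so an exhausting sequence $K_m\uparrow E$ may not exist; one only has $\alpha_0(K_m^c)\to 0$.)

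The paper repairs this by anchoring at a moving time. For each $m\in\mathbb{N}$ it uses tightness of the \emph{mean measure} $\bar\alpha_m(\cdot)=\mathbb{E}[\alpha_m(\cdot)]$ (not just of $\alpha_0$) to pick compacts $K_{m,j}$ with $\bar\alpha_m(K_{m,j}^c)\leq\epsilon\,2^{-2j}$, and then applies the Robbins--Siegmund \emph{maximal inequality} (not merely the convergence theorem) to the almost-supermartingale $(\alpha_k(K_{m,j}^c))_{k\geq m}$ started at time $m$, obtaining
\[
\mathbb{P}\Big(\sup_{k\geq m}\alpha_k(K_{m,j}^c)\geq 2^{-j}\Big)\leq 2^{j}\Big(\bar\alpha_m(K_{m,j}^c)+\sum_{k\geq m}\mathbb{E}[\xi_k]\Big).
\]
Now the drift contribution is the \emph{tail} $\sum_{k\geq m}\mathbb{E}[\xi_k]$, which vanishes as $m\to\infty$, and a Borel--Cantelli-type argument over $j$ and $m$ delivers a.s.\ tightness. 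The two ingredients your sketch is missing are therefore (i) the maximal inequality, and (ii) the shift of the starting time so that only the tail of the drift appears rather than the full sum.
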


In most applications, the sequence $(\xi_{n})_{n\geq 0}$ is deterministic, in which case condition \eqref{eq:xi.summ} reduces to the summability condition $\sum_{n=0}^{\infty} \xi_{n} < \infty$. Informally, Theorem~\ref{thm:asym.exch} states that the distribution of $(X_n)_{n\geq N}$  can be approximated for large $N$ by that of an exchangeable sequence whose predictive distributions converge to the same random limit $\alpha(\cdot)$. 
From a Bayesian perspective, the random measure $\alpha(\cdot)$ represents the unknown statistical model, and its law corresponds to the prior distribution. Given a sample, whose information is summarized by the sigma-algebra $\mathcal{G}_n$, $\mathbb{P}(\alpha(\cdot)\in \cdot | \mathcal{G}_n)$ is the posterior distribution, and  $\mathbb{E}[\alpha(\cdot) | \mathcal{G}_n]$ the predictive distribution. 
Theorem~\ref{thm:asym.exch} does not provide a characterization of the asymptotic model; e.g., whether it is absolutely continuous. Such characterizations are available for martingale random measures through the tools developed in \cite{berti2013exchangeable}, but remain unavailable for almost supermartingale random measures. We return to this point in the discussion.

If the sequence $(\alpha_n)_{n \geq 0}$ forms a martingale (e.g., Definition~\ref{def:cid.pred}), standard properties of martingales imply $\alpha_n(\cdot)=\mathbb{E}[\alpha(\cdot) | \mathcal{G}_n]$. That is, the predictive distribution is an unbiased estimator of the limiting measure; in this context, the statistical model. Notably, this result depends solely on the martingale property and does not require exchangeability; see the notion of predictive coherence in \cite{fong2021martingale}. However, under the a.c.i.d. condition (Definition~\ref{def:acid.pred}), the sequence $(\alpha_n)_{n \geq 0}$ fails to be a martingale, and in general, $\alpha_n(\cdot) \neq \mathbb{E}[\alpha(\cdot)| \mathcal{G}_n]$. This naturally raises the question of whether this bias vanishes or can be reduced. The following corollary to Theorem~\ref{thm:asym.exch} provides a partial answer.

\begin{corollary}\label{corollary1}
Let $(X_{n})_{n\geq 1
}$ be an (a.c.i.d.) asymptotically exchangeable sequence with directing measure $\alpha(\cdot)$. Then, for $M\to \infty$.
\begin{equation} \label{eq:conv.ind}
    \mathbb{P}\left(X_{n+M}\in \cdot  |\mathcal{G}_{n}\right) \stackrel{\text{w}}{\to} \mathbb{E}\left(\alpha(\cdot) | \mathcal{G}_{n} \right) \ \ \ \ \ \text{a.s.}.
\end{equation}
\end{corollary}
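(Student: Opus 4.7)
The plan is to reduce the claim about $\mathbb{P}(X_{n+M}\in\cdot\mid\mathcal{G}_n)$ to the a.s.\ weak convergence $\alpha_n\stackrel{\text{w}}{\to}\alpha$ already provided by Theorem~\ref{thm:asym.exch}, by pushing the index $n+M-1$ inside a conditional expectation via the tower property and then swapping limit and conditional expectation through a conditional dominated convergence argument.

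To make this concrete, I would fix $n\geq 0$ and a bounded continuous test function $f\in C_b(E)$. Since $\mathcal{G}_n\subset\mathcal{G}_{n+M-1}$, the tower property yields
\begin{equation*}
\int f\,d\mathbb{P}(X_{n+M}\in\cdot\mid\mathcal{G}_n)\;=\;\mathbb{E}\bigl[f(X_{n+M})\mid\mathcal{G}_n\bigr]\;=\;\mathbb{E}\Bigl[\int f\,d\alpha_{n+M-1}\,\Big|\,\mathcal{G}_n\Bigr].
\end{equation*}
By Theorem~\ref{thm:asym.exch}, $\alpha_k\stackrel{\text{w}}{\to}\alpha$ a.s., hence $\int f\,d\alpha_k\to\int f\,d\alpha$ a.s., and these integrals are uniformly bounded by $\|f\|_\infty$. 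Applying the a.s.\ form of conditional dominated convergence (Hunt's lemma, which follows from conditional monotone convergence applied to $2\|f\|_\infty-\sup_{k\geq M}|\int f\,d\alpha_k-\int f\,d\alpha|$) then yields
\begin{equation*}
\mathbb{E}\Bigl[\int f\,d\alpha_{n+M-1}\,\Big|\,\mathcal{G}_n\Bigr]\;\stackrel{\text{a.s.}}{\longrightarrow}\;\mathbb{E}\Bigl[\int f\,d\alpha\,\Big|\,\mathcal{G}_n\Bigr]\;=\;\int f\,d\mathbb{E}[\alpha(\cdot)\mid\mathcal{G}_n],
\end{equation*}
where the last equality uses the existence of regular conditional distributions on the Polish space $E$, so that the setwise conditional expectation $\mathbb{E}[\alpha(\cdot)\mid\mathcal{G}_n]$ is a bona fide random probability measure.

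The final step, and the one I would expect to be the main technical subtlety, is upgrading this ``one $f$ at a time'' a.s.\ convergence to a single a.s.\ event on which $\mathbb{P}(X_{n+M}\in\cdot\mid\mathcal{G}_n)$ converges weakly to $\mathbb{E}[\alpha(\cdot)\mid\mathcal{G}_n]$. Because $E$ is Polish, $C_b(E)$ admits a countable convergence-determining family $\{f_j\}_{j\geq 1}$, and intersecting the countably many null sets on which the integrals against $f_j$ fail to converge produces the required probability-one event, which by definition of weak convergence gives the a.s.\ statement of Corollary~\ref{corollary1}. The main obstacle is thus ensuring that the conditional dominated convergence holds in its a.s.\ form rather than merely in $L^1$, but the uniform bound $\|f\|_\infty$ makes Hunt's lemma directly applicable.
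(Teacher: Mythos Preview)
Your proposal is correct and follows essentially the same route as the paper: tower property to rewrite $\mathbb{E}[f(X_{n+M})\mid\mathcal{G}_n]$ as $\mathbb{E}[\int f\,d\alpha_{n+M-1}\mid\mathcal{G}_n]$, then pass the limit inside via conditional bounded/dominated convergence using the a.s.\ weak convergence $\alpha_k\stackrel{\text{w}}{\to}\alpha$ from Theorem~\ref{thm:asym.exch}. You are in fact more careful than the paper on two points it glosses over---justifying the a.s.\ (not merely $L^1$) form of conditional dominated convergence via the monotone $\sup$ trick, and collapsing the $f$-dependent null sets through a countable convergence-determining class---so your argument is a strict tightening of the paper's.
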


In words, Corollary~\ref{corollary1} states that, if we are predicting future observations using the sequence of predictive distributions of an asymptotically exchangeable sequence, the $M$-step ahead predictive distribution will become closer (in a distance metrizing weak convergence) to corresponding predictive distribution obtained using the Bayesian posterior distribution as $M$ increases. 

\section{Examples of  a.c.i.d. sequences} \label{sec:examples}

Throughout the section, $\mathcal{G}$ will be the natural filtration
, $(E,\mathcal{E})=(\mathbb{R}^{p},\mathcal{B}(\mathbb{R}^{p}))$, and $(\eta_{n})_{n \geq 1}$ an appropriately chosen deterministic sequence such that $\eta_n \to 0$. When possible, the predictive distributions $(\alpha_n)_{n \geq 0}$ will be specified recursively. This is not a requirement, but is convenient for implementing predictive resampling.  Examples in Sections~\ref{sec:gauss_pred} and \ref{sec:param_boot} are considered by \cite{holmes2023statistical} and studied mathematically by \cite{garelli2024asymptotics} and \cite{fong2024asymptotics}. Here, the goal is to illustrate the benefit of working with our new technical condition. Examples in Sections~\ref{sec:kernel} and \ref{sec:kernel_regr} are new, and will define usable new predictive resampling schemes. 

\subsection{Gaussian sample mean driven sequence}\label{sec:gauss_pred}

Let $\text{N}(\mu,\Sigma)$ denote a $p$-variate Gaussian distribution with mean $\mu$ and covariance $\Sigma$, and $\phi(x;\mu,\Sigma)$ the corresponding density function. We can specify the generative process of a sequence $(X_n)_{n\geq 1}$ as follows: 
choose $\hat{\theta}_{0} \in \mathbb{R}^p$ and $\Sigma$ a valid variance-covariance matrix, and 
sample inductively 
    $X_{n+1}|\mathcal{G}_{n} \sim \text{N}(\hat{\theta}_{n},\Sigma),$
where 
$\hat{\theta}_{n}$ is updated recursively as $\hat{\theta}_{n} = (1-\eta_{n}) \hat{\theta}_{n-1} + \eta_{n} X_{n}$. 
 $(\text{N}(\hat{\theta}_{n},\Sigma))_{n \geq 0}$ is a sequence of  parametric predictive random measures, where the randomness is driven by $\hat{\theta}_n$. When $\eta_n=\frac{1}{n}$, $\hat{\theta}_n$ corresponds to the sample mean (the MLE). 
 
It is easy to show that $(X_n)_{n \geq 1}$ is in general not c.i.d., instead it is an a.c.i.d. sequence. Indeed, since $\alpha_{n}(A) = \int_{A} \phi(x;\hat{\theta}_{n}, \Sigma) \di x$, it holds that
\begin{align*}
   \mathbb{E}\left(\alpha_{n+1}(A)  | \mathcal{G}_{n}\right)
     & = \int_{\mathbb{R}^p} \int_{A} \phi(x; \hat{\theta}_{n+1}, \Sigma )  \di x \phi(x_{n+1};\hat{\theta}_{n},\Sigma )  \di x_{n+1}   
     = \int_A \phi(x;\hat{\theta}_{n},(1+\eta_{n+1}^2) \Sigma) \di x,
\end{align*}
where the second equality follows from the recursion formula for $\hat{\theta}_{n+1}$, 
and solving a convolution of Gaussian variables. To check Definition~\ref{def:acid.law}, we note that 
\begin{align*}
 \text{TV}\left(\mathbb{P}(X_{n+2} \in \cdot |\mathcal{G}_{n}), \mathbb{P}(X_{n+1} \in \cdot |\mathcal{G}_{n}) \right) & =  \text{TV}\left(\text{N}(\hat{\theta}_{n},(1+\eta_{n+1}^2) \Sigma), \text{N}(\hat{\theta}_{n},\Sigma) \right) \leq \frac{3}{2} p \eta_{n+1}^2,
\end{align*}
where the inequality follows from Theorem 1.1 of \citep{devroye2018total} (see Supplementary Material (SM) Section~\ref{sec:background_tv_bounds}). Then, $(X_n)_{n\geq 1}$ is a.c.i.d with parameter $\xi_n = \frac{3}{2} p \eta_{n+1}^2$. The fact that $\mathbb{P}(X_{n+2}\in\cdot|\mathcal{G}_{n})=\text{N}(\hat{\theta}_{n},(1+\eta_{n+1}^2) \Sigma)$ also implies that there is no $\eta_{n+1}$ that makes the sequence c.i.d., except for the degenerate case where $\eta_{n+1}=0$, which corresponds to 
no parameter update. 

From Theorem~\ref{thm:asym.exch}, it suffices to assume that $\sum_{n=1}^\infty \eta_n^2 < \infty$ in order to prove that $(\text{N}(\hat{\theta}_{n},\Sigma)){n \geq 0}$ converges weakly a.s. and that $(X_n)_{n\geq 1}$ is asymptotically exchangeable. While \cite{garelli2024asymptotics} has already established this result, their proof relies on a novel strong law of large numbers for dependent, non-identically distributed random variables. In contrast, our proof is straightforward, relying only on convolutions of Gaussians together with a standard total variation bound.

\subsection{Bayesian Parametric Boostrap}\label{sec:param_boot}

Let $P_{\theta}$ denote a probability distribution parametrized by an unknown parameter $\theta \in \Theta$; $p(x;\theta)$ denote the corresponding density function (or probability mass function if the random variable is discrete), $p'(x;\theta)$ and $p''(x;\theta)$ the first and second derivative with respect to $\theta$, $s(x;\theta):= \frac{\partial}{\partial \theta} \log p(x;\theta)$ the score function, and $\mathcal{I}(\theta)=\mathbb{E}(s^{2}(x;\theta))$ the Fisher information. Let $\hat{\theta}_{0}$ be an initialization, we inductively sample for $n\geq 1$
\begin{equation}\label{eq:param_sequence}
    X_{n}|\mathcal{G}_{n-1} \sim P_{\hat{\theta}_{n-1}},
\end{equation}
where $\hat{\theta}_{n}$ is updated recursively with Stochastic Gradient Descent (SGD)
\begin{equation}\label{eq:sgd_natural}
\hat{\theta}_{n}= \hat{\theta}_{n-1} + \eta_n Z(x_n, \hat{\theta}_{n-1}),
\end{equation}
where $Z(x_n, \hat{\theta}_{n-1}):=\mathcal{I}(\hat{\theta}_{n-1})^{-1} s(x_n, \hat{\theta}_{n-1})$ is the natural gradient update, and $\eta_n$ the step size. 
The use of the natural gradient follows the description of the parametric Bayesian bootstrap given by \cite{fong2024asymptotics}. 

To state the main result, we make the following assumptions,

\begin{assumption}\label{ass:param_acid} Consider the data-generating process \eqref{eq:param_sequence}-\eqref{eq:sgd_natural} and assume
\begin{enumerate}[label=\alph*)]
\item  \hspace{0.2cm} $p$ is twice differentiable with respect to $\theta$.
\item \hspace{0.2cm} $\mathcal{I}(\theta)$ is continuous and there exists $\epsilon$ such that $\epsilon<\mathcal{I}(\theta)<\infty$ for all $\theta \in \Theta$. 
\item \hspace{0.2cm} The sequence $(Z(X_n,\hat{\theta}_{n-1}))_{n \geq 0}$ is uniformly integrable.
\item \hspace{0.2cm} There exists $C<\infty$ such that $\int p''(x;\theta) \di x <C$ for all $\theta \in \Theta$.
\end{enumerate}
\end{assumption}
Assumptions~\ref{ass:param_acid} a)–c) match those in \cite{fong2024asymptotics}.  a) and b) are standard. Assumption c) allows us to use Doob's martingale convergence theorem on $(\hat{\theta}_n)_{n \geq 1}$. This is a fairly strong assumption, which may be difficult to verify in practice. However, \cite{fong2024asymptotics} provide a sufficient condition that is easier to check in examples (see SM Proposition~\ref{prop:fongyiu}). We discuss alternatives below. Assumption~\ref{ass:param_acid} d) is used in a Taylor expansion of the density $p(x;\hat{\theta}_{n})$, which we employ to bound $\text{TV}(\mathbb{E}(P_{\hat{\theta}_{n}}|\mathcal{G}_{n-1}),P_{\hat{\theta}_{n-1}})$. It is reminiscent of classical conditions for asymptotic normality of maximum likelihood estimators, except that it is a condition on derivatives of the density and not of the log density as more commonly done (as in \cite{van2000asymptotic} Section 5.6). 
 Nevertheless, we will show that it is verifiable for some parametric models.

\begin{proposition}\label{prop:param_boot}Let $(X_n)_{n \geq 1}$ be sampled inductively as described in \eqref{eq:param_sequence}-\eqref{eq:sgd_natural}. Under Assumption~\ref{ass:param_acid} and $\sum_{n=1}^{\infty} \eta^2_n<\infty$, it holds that
\begin{itemize}
    \item \hspace{0.2cm} $(X_n)_{n \geq 1}$ is a.c.i.d., with $\xi_{n}:=C \frac{\eta_{n+1}^2}{2\mathcal{I}(\hat{\theta}_n)}$, where $C$ is from Assumption~~\ref{ass:param_acid} d);
    \item \hspace{0.2cm} $\sum_{n=0}^{\infty} \xi_n <\infty$ a.s.;
    \item \hspace{0.2cm} $(X_n)_{n \geq 1}$ is  asymptotically exchangeable.
\end{itemize} 
\end{proposition}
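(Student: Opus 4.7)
The proposition has three claims, and the third is immediate from the first two by Theorem~\ref{thm:asym.exch}. Thus the real work is to establish the total variation bound of Definition~\ref{def:acid.law} with $\xi_n = C\eta_{n+1}^2/(2\mathcal{I}(\hat\theta_n))$, and then the a.s.\ summability of this sequence.

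\emph{Total variation bound.} The one-step-ahead predictive at time $n+1$ has density $p(\cdot;\hat\theta_n)$, while the two-step-ahead predictive has density $\mathbb{E}[p(\cdot;\hat\theta_{n+1})\mid\mathcal{G}_n]$, since $\hat\theta_{n+1}$ is $\mathcal{G}_{n+1}$-measurable. My plan is a second-order Taylor expansion of $\theta\mapsto p(x;\theta)$ around $\hat\theta_n$, evaluated at $\hat\theta_{n+1}$, which is legal under Assumption~\ref{ass:param_acid}~a). The crucial observation is that the conditional mean of the linear term vanishes: using \eqref{eq:sgd_natural}, $\hat\theta_{n+1}-\hat\theta_n = \eta_{n+1}\mathcal{I}(\hat\theta_n)^{-1} s(X_{n+1};\hat\theta_n)$, and $\mathbb{E}[s(X_{n+1};\hat\theta_n)\mid\mathcal{G}_n]=0$ since under $P_{\hat\theta_n}$ the score has zero mean. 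Hence the conditional $L^{1}$ difference between the two densities reduces to the expectation of the Lagrange remainder. Integrating over $x$, Assumption~\ref{ass:param_acid}~d) furnishes the uniform bound $\int|p''(x;\tilde\theta)|\,\di x \leq C$ (with $\tilde\theta$ the intermediate point), which can be pulled outside the conditional expectation, leaving $\mathbb{E}[(\hat\theta_{n+1}-\hat\theta_n)^{2}\mid\mathcal{G}_n] = \eta_{n+1}^{2}\mathcal{I}(\hat\theta_n)^{-1}$ by the definition of Fisher information. Combined with the $1/2$ from Taylor, this yields exactly $\xi_n = C\eta_{n+1}^{2}/(2\mathcal{I}(\hat\theta_n))$.

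\emph{Summability and conclusion.} Assumption~\ref{ass:param_acid}~b) gives $\mathcal{I}(\theta)>\epsilon$ uniformly, so pathwise $\xi_n \leq \tfrac{C}{2\epsilon}\,\eta_{n+1}^{2}$, and the hypothesis $\sum_n \eta_n^{2}<\infty$ delivers $\sum_n\xi_n<\infty$ a.s. The a.c.i.d. property together with the summability of $(\xi_n)$ permits a direct application of Theorem~\ref{thm:asym.exch}, establishing asymptotic exchangeability.

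\emph{Main obstacle.} The delicate part is the Taylor-remainder bound. First, Assumption~\ref{ass:param_acid}~d) should be interpreted with absolute values inside the integral, since $\int p''(x;\theta)\,\di x=0$ whenever one can differentiate under the integral; this is the reading that makes the bound nontrivial. Second, the intermediate point $\tilde\theta$ depends on both $x$ and $\omega$, so the uniformity in $\theta$ of Assumption~\ref{ass:param_acid}~d) is essential in order to extract the constant $C$ before taking the conditional expectation in $X_{n+1}$. The standard score-regularity facts ($\mathbb{E}[s(X;\theta)]=0$ and $\mathbb{E}[s^{2}(X;\theta)]=\mathcal{I}(\theta)$) come for free from (a)--(b) and a routine interchange of derivative and integral.
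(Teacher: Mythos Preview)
Your Taylor-expansion argument for the a.c.i.d.\ bound is exactly the paper's: expand $p(x;\hat\theta_{n+1})$ around $\hat\theta_n$, kill the linear term via the zero-mean score under $P_{\hat\theta_n}$, and bound the quadratic remainder using Assumption~\ref{ass:param_acid}~d), arriving at $\xi_n=C\eta_{n+1}^{2}/(2\mathcal{I}(\hat\theta_n))$ after computing $\mathbb{E}[(\hat\theta_{n+1}-\hat\theta_n)^{2}\mid\mathcal{G}_n]=\eta_{n+1}^{2}\mathcal{I}(\hat\theta_n)^{-1}$. Your reading of Assumption~\ref{ass:param_acid}~d) with absolute values, and your flag on the $x$-dependence of the Lagrange point, are both well taken; the paper is equally informal here (it writes $\int_A p''(x;\nu_n)\,\di x\leq C$ with $\nu_n$ the intermediate point), and the clean fix is the integral form of the remainder followed by Fubini.

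Where you diverge is in the summability step, and your route is genuinely simpler. The paper first proves that $(\hat\theta_n)$ is an $L^{2}$-bounded martingale---this is where Assumption~\ref{ass:param_acid}~c) (uniform integrability of $Z$) enters---applies Doob's convergence theorem to get $\hat\theta_n\to\hat\theta_\infty$ a.s., then continuous mapping and Assumption~\ref{ass:param_acid}~b) to conclude $\mathcal{I}(\hat\theta_n)^{-1}$ converges a.s.\ to a finite limit, and finally invokes a small lemma (``convergent sequence times summable is summable'') to deduce $\sum_n\xi_n<\infty$. You bypass all of this by reading off from Assumption~\ref{ass:param_acid}~b) the uniform bound $\mathcal{I}(\hat\theta_n)^{-1}\leq 1/\epsilon$, which gives $\xi_n\leq C\eta_{n+1}^{2}/(2\epsilon)$ deterministically. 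This is correct provided $\hat\theta_n\in\Theta$ for all $n$ (automatic when $\Theta=\mathbb{R}$, as in the paper's Gaussian and Student-$t$ examples), and it shows that Assumption~\ref{ass:param_acid}~c) is not actually needed for the stated conclusions---notable because the paper introduces a separate constrained version (Proposition~\ref{prop:param_boot_constrained}) precisely to drop~c), using there the very bound you give here.
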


The sequence $(P_{\hat{\theta}_{n}})_{n \geq 0}$ is not a martingale, which means that the resulting sequence $(X_n)_{n \geq 1}$ is not c.i.d. However, Proposition~\ref{prop:param_boot} shows that $(X_n)_{n \geq 1}$ is a.c.i.d.. The result is stated for a univariate parameter but it can be generalized to the multivariate setting. 

\begin{example} (Gaussian, unknown mean, known variance) Let $P_{\theta} = \text{N}(\theta,\sigma^2)$. We have that $\mathcal{I}(\theta)=\frac{1}{\sigma^2}$ and $s(x;\theta) = \frac{(x-\theta)}{\sigma^2}$, and so $Z(x,\theta)=(x-\theta)$. In this case, Assumptions~\ref{ass:param_acid} a)-c) trivially hold. The second derivative is
\[
p''(x;\theta) =\phi(x;\theta,\sigma^2)\left[\frac{(x-\theta)^2}{\sigma^2} - \frac{1}{\sigma^2}\right],
\]
which implies that $\int p''(x;\theta) \di x =\frac{\sigma^{2}-1}{\sigma^{2}}<\infty$. This shows Assumption~\ref{ass:param_acid} d) holds. 
\end{example}


\begin{example}\label{ex:studentt}(Student-t distribution)  Let $P_{\theta}$ be a location-scale Student-$t$ distribution with location $\theta$ and known scale $\tau$ and degrees of freedom $\nu$.
Setting \( \tau = 1 \), the update is given by \cite{fong2024asymptotics}: 
\[
Z(\theta,Y) = \frac{(\nu +3)(Y-\theta)}{\nu +(Y-\theta)^2},
\]
who also prove that Assumptions~\ref{ass:param_acid} a)-c) hold. The case is interesting because both $Z(\theta,Y)$ and $\mathcal{I}(\theta)$ are unbounded. 
In SM Section~\ref{sec:proof_param_boot}, we show that $\int p''(x;\theta) \di x <C$, for some finite constant $C$, which implies that Assumption~\ref{ass:param_acid} d) also holds.
\end{example}

\begin{remark} \cite{holmes2023statistical} originally proposed the Bayesian bootstrap in the context of predictive resampling with a different update, $Z'(x,\theta)=s(x,\theta)$; i.e., the standard SGD without the natural gradient. Proposition~\ref{prop:param_boot} holds in this setting if we substitute Assumption~\ref{ass:param_acid} c) with the assumption that the sequence $(\mathcal{I}(\hat{\theta}_n))_{n \geq 0}$ is uniformly integrable. 
However, showing uniformly integrable of $(\mathcal{I}(\hat{\theta}_n))_{n \geq 0}$ is challenging. One can derive a sufficient condition following the steps in the proof of  
Proposition 1 of \cite{fong2024asymptotics} but the resulting condition is not satisfied by the examples considered in this paper.  
\end{remark}

Assuming that $\Theta=[\underline{\theta},\bar{\theta}]$, for arbitrary finite $\underline{\theta},\bar{\theta}$, allows one to drop the uniform integrability assumption completely, both in the case of $Z(x,\theta)$ and $Z'(x,\theta)$. Define a constrained SGD sequence as follows

\begin{equation}\label{eq:sgd_constrained}
			\tilde{\theta}_n=\begin{cases}
				\underline{\theta} & \text{if } \,\, \hat{\theta}_n <\underline{\theta}\\
				\hat{\theta}_n & \text{if } \,\, \underline{\theta}\leq \hat{\theta}_n \leq \bar{\theta} \\
				 \bar{\theta} & \text{if } \,\, \hat{\theta}_n >\bar{\theta},
			\end{cases}
\end{equation}

$(\tilde{\theta}_n)_{n \geq 0}$  is not a martingale but it is still possible to prove that $(X_{n})_{n\geq 1}$ is a.c.i.d and drop Assumption~\ref{ass:param_acid} c). 

\begin{proposition}\label{prop:param_boot_constrained}Let $(X_n)_{n \geq 1}$ be sampled inductively as described in \eqref{eq:param_sequence}-\eqref{eq:sgd_constrained}. Under Assumption~\ref{ass:param_acid} a),b), and d), 
and $\sum_{n=1}^{\infty} \eta^2_n<\infty$, the statement of Proposition~\ref{prop:param_boot} holds with $\xi_{n}:=C \frac{\eta_{n+1}^2}{2\epsilon}$, where $\epsilon$ and $C$ are from Assumption~\ref{ass:param_acid} b) and d). 
\end{proposition}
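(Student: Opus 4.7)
The plan is to follow closely the strategy of the proof of Proposition~\ref{prop:param_boot}: bound the total variation distance between $\mathbb{P}(X_{n+2}\in\cdot\mid\mathcal{G}_n)$ and $\mathbb{P}(X_{n+1}\in\cdot\mid\mathcal{G}_n)$ via a second-order Taylor expansion of $p(x;\theta)$ in $\theta$, and then invoke Theorem~\ref{thm:asym.exch}. The two substitutions that make the argument work under the weaker hypotheses are (i) the uniform lower bound $\mathcal{I}(\tilde\theta_n)\ge\epsilon$ from Assumption~\ref{ass:param_acid}~b), which converts the random factor $1/\mathcal{I}(\hat\theta_n)$ appearing in the unconstrained bound into the deterministic constant $1/\epsilon$, and (ii) the contractive nature of the truncation map $\theta\mapsto\max(\underline\theta,\min(\bar\theta,\theta))$, which obviates the need for the uniform-integrability Assumption~\ref{ass:param_acid}~c).

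Concretely, I would write $\mathbb{P}(X_{n+1}\in A\mid\mathcal{G}_n)=P_{\tilde\theta_n}(A)$ and $\mathbb{P}(X_{n+2}\in A\mid\mathcal{G}_n)=\mathbb{E}[P_{\tilde\theta_{n+1}}(A)\mid\mathcal{G}_n]$, apply a second-order Taylor expansion
\begin{equation*}
p(x;\tilde\theta_{n+1})-p(x;\tilde\theta_n) = p'(x;\tilde\theta_n)(\tilde\theta_{n+1}-\tilde\theta_n) + \tfrac{1}{2}p''(x;\theta^*)(\tilde\theta_{n+1}-\tilde\theta_n)^2
\end{equation*}
(valid by Assumption~\ref{ass:param_acid}~a)), integrate over $A$, take conditional expectations, and bound the remainder using Assumption~\ref{ass:param_acid}~d). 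The quadratic part is handled by the crucial observation that because $\tilde\theta_n\in[\underline\theta,\bar\theta]$ the truncation is a contraction of $\hat\theta_{n+1}$ toward $\tilde\theta_n$, so $|\tilde\theta_{n+1}-\tilde\theta_n|\le|\hat\theta_{n+1}-\tilde\theta_n|=\eta_{n+1}|Z(X_{n+1},\tilde\theta_n)|$. Using $\mathbb{E}[Z^2\mid\mathcal{G}_n]=1/\mathcal{I}(\tilde\theta_n)$ and Assumption~\ref{ass:param_acid}~b) gives
\begin{equation*}
\mathbb{E}\bigl[(\tilde\theta_{n+1}-\tilde\theta_n)^2\mid\mathcal{G}_n\bigr]\le \eta_{n+1}^2/\mathcal{I}(\tilde\theta_n)\le \eta_{n+1}^2/\epsilon,
\end{equation*}
which yields the $C\eta_{n+1}^2/(2\epsilon)$ bound on this contribution. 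Summability $\sum_n\xi_n<\infty$ a.s. then follows directly from $\sum_n\eta_n^2<\infty$, and asymptotic exchangeability is an immediate application of Theorem~\ref{thm:asym.exch}.

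The main technical obstacle is the first-order term in the Taylor expansion. In Proposition~\ref{prop:param_boot} the natural-gradient identity $\mathbb{E}[\hat\theta_{n+1}-\tilde\theta_n\mid\mathcal{G}_n]=0$ makes it vanish cleanly, but under truncation the residual clipping bias $\mathbb{E}[\tilde\theta_{n+1}-\hat\theta_{n+1}\mid\mathcal{G}_n]$ is generically non-zero when $\tilde\theta_n$ sits near an endpoint. I would try to control it through the sign decomposition $\tilde\theta_{n+1}-\hat\theta_{n+1}=(\underline\theta-\hat\theta_{n+1})^{+}-(\hat\theta_{n+1}-\bar\theta)^{+}$, combined with a conditional Cauchy--Schwarz bound anchored in the $L^{2}$ estimate above, and exploiting that $\int p'(x;\tilde\theta_n)\,dx=0$ so $|\int_A p'(x;\tilde\theta_n)\,dx|\le\tfrac{1}{2}\int|p'(x;\tilde\theta_n)|\,dx$. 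Verifying that the product of this integral with the clipping bias stays of order $\eta_{n+1}^2$ — rather than the naive $\eta_{n+1}$ — is the delicate step and the one whose bookkeeping I would check most carefully.
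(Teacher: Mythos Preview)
Your strategy is exactly the paper's: its proof of Proposition~\ref{prop:param_boot_constrained} is three sentences saying ``follow the same steps as in the proof of Proposition~\ref{prop:param_boot} up to the definition of $\xi_n$, then use $\mathcal{I}(\theta)\ge\epsilon$ on the compact $\Theta=[\underline\theta,\bar\theta]$ to upper-bound \eqref{eq:param_boot_xi_n} by $C\eta_{n+1}^2/(2\epsilon)$, which is summable.'' Your handling of the second-order term via the contraction property of truncation and the passage to a deterministic $\xi_n$ via Assumption~\ref{ass:param_acid}~b) matches this line for line.

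The issue you flag --- the first-order term under truncation --- is real, and the paper simply does not address it: by writing ``follow the same steps'' it implicitly treats $\mathbb{E}[\tilde\theta_{n+1}-\tilde\theta_n\mid\mathcal{G}_n]=0$, which fails once clipping occurs. Moreover, your hope of squeezing this contribution down to $O(\eta_{n+1}^2)$ cannot succeed in general. When $\tilde\theta_n$ sits exactly at an endpoint, say $\tilde\theta_n=\bar\theta$, one has $\hat\theta_{n+1}=\bar\theta+\eta_{n+1}Z$ and the clipping residual is $-\eta_{n+1}Z^{+}$, so the conditional bias is $\eta_{n+1}\,\mathbb{E}[Z^{+}\mid\mathcal{G}_n]$, which is genuinely of order $\eta_{n+1}$; no Cauchy--Schwarz bookkeeping or sign decomposition improves that exponent. (Away from the boundary by a fixed margin $\delta>0$ one does get $O(\eta_{n+1}^2/\delta)$ via Chebyshev plus Cauchy--Schwarz, so the obstruction is purely a boundary phenomenon.) You have therefore correctly located a lacuna that the paper's own proof shares; closing it would require an extra ingredient --- for instance $\sum_n\eta_n<\infty$, an argument that the boundary is hit only finitely often a.s., or an additional regularity assumption controlling $\int_A p'(x;\theta)\,dx$ near the endpoints.
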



\subsection{Kernel estimators}\label{sec:kernel}

Kernel density estimators are popular nonparametric methods to estimate smooth density functions.  Let $K:\mathbb{R}\to \mathbb{R}$ be 
an integrable function satisfying $\int_{\mathbb{R}}K(u)\di u=1$ called \textit{kernel}, and $h=(h_{n})_{n\geq 0}$ be a sequence of parameters called \textit{bandwidth}. The pair $(K,h)$ is generally chosen by the user, with $K$ bounded, and $h$  such that $h_n \to 0$ at a prespecified rate; see \cite{silverman2018density} for an introduction, and \cite{wolverton1969asymptotically} for the sequential version of kernel estimators.

We use kernels to construct sequences of predictive distributions: choose an initial distribution $\alpha_0$, and a pair $(K,h)$, define recursively $\alpha_{n}(A)$
\begin{equation} \label{eq:kernel_Fn}
	\alpha_{n}(A)=\frac{n-1}{n}\alpha_{n-1}(A)+\frac{1}{n h_{n}} \int_{A} K\left(\frac{x-X_{n}}{h_{n}}\right) \di x,
\end{equation}
and $X_{n+1} \sim \alpha_n$. Since $\int \frac{1}{h_n} K(\frac{x}{h_n}) \di x =1$, we will denote the normalized kernel $K_n (x) := \frac{1}{h_n} K(\frac{x}{h_n})$ and let $\mu_n (A):= \int_A K_n (x) \di x$ be the corresponding measure.

The next two Propositions show the sequence of predictive distributions $(\alpha_n)_{n\geq0}$ defined in \eqref{eq:kernel_Fn} is never c.i.d., except for the degenerate case of no smoothing, but it can be a.c.i.d. under suitable conditions on the kernel and bandwidth. 

More specifically, Proposition~\ref{prop:kernel.cid} establishes that the only instance in which the sequence defined by \eqref{eq:kernel_Fn} is c.i.d. occurs when the kernel is degenerate and equal to the Dirac delta function. This scenario entails no smoothing and corresponds to adopting the empirical distribution function as the predictive distribution. 
Informally, it means that the martingale condition would rule out any sequential kernel estimator. 

\begin{proposition}\label{prop:kernel.cid}
	Let $(\alpha_{n}(\cdot))_{n\geq0}$ be a sequence of the predictive distributions defined by \eqref{eq:kernel_Fn}. Then, the corresponding probability law is c.i.d. if and only if $K(x)=\delta(x)$ (Dirac delta function) almost everywhere. This corresponds to the empirical measure $\alpha_{n}(\cdot)= \frac{1}{n}\sum_{i=1}^{n} \mathbb{I}(X_{i}\in \cdot)$.
\end{proposition}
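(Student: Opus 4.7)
The plan is to translate the c.i.d. property into a convolution identity between the kernel measure and the predictive measure, and then pin down the kernel via a Fourier-analytic argument. By Definition~\ref{def:cid.pred}, c.i.d. is equivalent to $\mathbb{E}(\alpha_{n+1}(A)\mid\mathcal{G}_n) = \alpha_n(A)$ for every $A\in\mathcal{E}$ and every $n\geq 0$. Applying the recursion~\eqref{eq:kernel_Fn} at step $n+1$, letting $\mu_{n+1}$ denote the probability measure with density $K_{n+1}(x) = h_{n+1}^{-1} K(x/h_{n+1})$, and using that $X_{n+1}\mid\mathcal{G}_n\sim\alpha_n$, a direct computation gives
\begin{equation*}
\mathbb{E}(\alpha_{n+1}(A)\mid\mathcal{G}_n) \;=\; \tfrac{n}{n+1}\,\alpha_n(A) \;+\; \tfrac{1}{n+1}\,(\mu_{n+1}\ast\alpha_n)(A),
\end{equation*}
so c.i.d. collapses, for every $n\geq 0$, to the convolution identity $\mu_{n+1}\ast\alpha_n = \alpha_n$ a.s.

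Next I would specialize to $n=1$. The recursion at $n=1$ eliminates $\alpha_0$ and yields $\alpha_1 = \mu_1(\cdot - X_1)$, a random translate of $\mu_1$. Translation invariance of convolution strips the random shift, reducing $\mu_2\ast\alpha_1 = \alpha_1$ to the deterministic relation $\mu_2\ast\mu_1 = \mu_1$. Taking characteristic functions yields $\hat\mu_2(t)\,\hat\mu_1(t) = \hat\mu_1(t)$ for all $t\in\mathbb{R}$. Continuity of $\hat\mu_1$ together with $\hat\mu_1(0)=1$ guarantees a neighborhood $U$ of the origin on which $\hat\mu_1\neq 0$, forcing $\hat\mu_2 \equiv 1$ on $U$.

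The main obstacle is the final step, namely concluding from $\hat\mu_2\equiv 1$ on $U$ that $\mu_2 = \delta_0$. I would use that for each $t\in U$, $\int(1-\cos(tx))\,\di\mu_2(x) = 1 - \operatorname{Re}\hat\mu_2(t) = 0$; non-negativity of the integrand then forces $\mu_2$ to be supported on the lattice $(2\pi/t)\mathbb{Z}$. Intersecting the lattices over two values $t,t'\in U$ with irrational ratio leaves only $\{0\}$, so $\mu_2 = \delta_0$. Since $\mu_2$ is merely $K$ rescaled by the fixed bandwidth $h_2>0$, this forces $K=\delta$ almost everywhere (interpreted as a generalized function). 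For the converse, if $K=\delta$ then $\mu_n = \delta_0$, the identity $\mu_n\ast\alpha_{n-1} = \alpha_{n-1}$ holds trivially, and \eqref{eq:kernel_Fn} inductively reduces to the empirical measure $\alpha_n(\cdot)=\frac{1}{n}\sum_{i=1}^n\mathbb{I}(X_i\in\cdot)$.
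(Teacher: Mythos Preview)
Your argument is correct and shares the paper's core reduction---convolution identity plus characteristic functions---but your execution is more elementary in two respects. First, the paper works at general $n$, passes to densities in an auxiliary lemma, and invokes a West (1991)--type argument (sending all but one data point to infinity) to isolate a single summand and reach the identity $\gamma_{n+1}(t)\gamma_i(t)=\gamma_i(t)$; you instead specialize to $n=1$, where the recursion kills $\alpha_0$ and $\alpha_1$ is a pure translate of $\mu_1$, so the identity $\mu_2\ast\mu_1=\mu_1$ drops out immediately with no external reference. Second, the paper stops at ``$\gamma_{n+1}(t)=1$'' after noting $\gamma_i$ is not identically zero, leaving implicit the passage from $\hat\mu=1$ on a neighborhood of the origin to $\mu=\delta_0$; your lattice argument (intersecting $(2\pi/t)\mathbb{Z}$ over two $t$'s with irrational ratio) makes this step explicit and self-contained. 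What the paper's route buys is a statement phrased uniformly in $n$; what yours buys is a shorter, fully self-contained proof that avoids the density-level lemma and the cited device.
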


Proposition~\ref{prop:kernel.cid} corresponds to Theorem 1 of \cite{west1991kernel}, who proved a similar result for the classical (non-recursive) kernel density estimator where, for a fixed sample size $n$, the bandwidths $h_{1:n}$ are all equal. 


Proposition~\ref{prop:kernel.acid} shows that $(X_n)_{n\geq1}$ can be a.c.i.d. and asymptotically exchangeable. Consequently, kernel estimators can be used for predictive inference if one works with our weaker technical condition. For $i\leq n$, define the kernel convolution
\begin{equation} \label{eq:kernel.convolution}
	(K_{n+1} * K_{i})(x;X_i):=\int  \frac{1}{h_{n+1}} K\left(\frac{x-x_{n+1}}{h_{n+1}}\right) \frac{1}{h_i}  K\left(\frac{x_{n+1}-X_i}{h_{i}}\right) \di x_{n+1},
\end{equation}
and let $\mu_{n+1*i}(\cdot;X_i)$ be the corresponding probability measure. 
\begin{proposition} \label{prop:kernel.acid}
	Let $(\alpha_{n}(\cdot))_{n \geq 0}$ be a sequence of the predictive distributions defined by \eqref{eq:kernel_Fn}. Suppose that there exists $d>0$ and $C>0$ such that $K(\cdot)$ satisfies for all $n\geq i\geq 1$
	\begin{equation}\label{eq:kernel_tv}
		TV(\mu_{n+1*i} (\cdot, X_i), \mu_{i} (\cdot,X_i)) \leq C \frac{h^d_{n+1}}{h^d_{i}}\ \ \ \ \ \text{a.s.}.
	\end{equation}
	Then, the corresponding probability law is a.c.i.d. with parameter
	\begin{equation} \label{eq:ker.norm.eta}
		\xi_{n}:=  \frac{1}{n+1} \sum_{i=1}^{n}\frac{1}{n} C \frac{h^d_{n+1}}{h^d_{i}}. 
	\end{equation}
	Moreover, a sufficient condition for $(X_n)_{n \geq 1}$ to be asymptotically exchangeable is that
 $h$ satisfies
	\begin{equation}\label{eq:kernel_bandwidth_cond}
		\frac{ h^d_{n+1}}{n+1} \sum_{i=1}^{n} 
		\frac{1}{h^d_{i}}=o \left(\log^{-1} n\right).
	\end{equation}
\end{proposition}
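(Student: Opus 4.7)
The plan is to verify both claims by exploiting the explicit form of $\alpha_n$ obtained by unrolling the recursion. Because the coefficient $(n-1)/n$ vanishes at $n=1$, induction on \eqref{eq:kernel_Fn} gives the clean representation
\begin{equation*}
\alpha_n(A) \;=\; \frac{1}{n}\sum_{i=1}^n \mu_i(A;X_i), \qquad n\geq 1,
\end{equation*}
so that the initial measure $\alpha_0$ drops out and the predictive is simply the average of the $n$ kernel-smoothed atoms. This explicit form will drive the rest of the argument.

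Next, since $X_{n+1}\mid \mathcal{G}_n \sim \alpha_n$, applying the recursion and conditioning gives
\begin{equation*}
\mathbb{E}\bigl(\alpha_{n+1}(A)\mid \mathcal{G}_n\bigr) \;=\; \frac{n}{n+1}\alpha_n(A) + \frac{1}{n+1}\int \mu_{n+1}(A;y)\,\alpha_n(\di y).
\end{equation*}
Substituting the explicit form of $\alpha_n$ and swapping sum and integral by Fubini, the remaining integral becomes a sum of convolutions, $\int \mu_{n+1}(A;y)\,\alpha_n(\di y) = \frac{1}{n}\sum_{i=1}^n \mu_{n+1*i}(A;X_i)$, with the convolved kernel as in \eqref{eq:kernel.convolution}. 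Subtracting $\alpha_n(A)$ then produces the telescoping identity
\begin{equation*}
\mathbb{E}\bigl(\alpha_{n+1}(A)\mid \mathcal{G}_n\bigr) - \alpha_n(A) \;=\; \frac{1}{n(n+1)}\sum_{i=1}^n\bigl[\mu_{n+1*i}(A;X_i) - \mu_i(A;X_i)\bigr].
\end{equation*}

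Taking absolute values inside the sum and invoking the TV hypothesis \eqref{eq:kernel_tv} termwise yields a bound that is uniform in $A$ and equal to $\xi_n$ in \eqref{eq:ker.norm.eta}; this verifies Definition~\ref{def:acid.law} and establishes the a.c.i.d.\ property. For the asymptotic exchangeability claim I would invoke Theorem~\ref{thm:asym.exch}, which reduces the statement to the a.s.\ summability $\sum_n \xi_n<\infty$. Writing $\xi_n=(C/n)\,b_n$ with $b_n:=h_{n+1}^\epsilon(n+1)^{-1}\sum_{i\leq n}h_i^{-\epsilon}$, the bandwidth condition \eqref{eq:kernel_bandwidth_cond} translates into $b_n=o(1/\log n)$; combining this with a partial summation argument on $\sum_n (1/n)b_n$ delivers the required convergence. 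I expect this last summability step to be the main obstacle: the computation of the conditional expectation above is algebraically routine, whereas controlling $\sum_n \xi_n$ requires a delicate balancing of the bandwidth decay against the accumulating $h_i^{-\epsilon}$ weights, and is what dictates the precise rate in~\eqref{eq:kernel_bandwidth_cond}.
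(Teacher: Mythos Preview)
Your argument is essentially the paper's own proof: you unroll \eqref{eq:kernel_Fn} to $\alpha_n=\frac{1}{n}\sum_i \mu_i(\cdot;X_i)$, compute $\mathbb{E}(\alpha_{n+1}(A)\mid\mathcal{G}_n)-\alpha_n(A)$ as $\frac{1}{n(n+1)}\sum_i[\mu_{n+1*i}-\mu_i]$, apply \eqref{eq:kernel_tv} termwise, and then invoke Theorem~\ref{thm:asym.exch}. The only cosmetic difference is that for the summability of $(\xi_n)$ the paper cites two theorems from Rudin rather than sketching a partial-summation argument; your identification of this step as the delicate one is accurate, and you should be aware that $b_n=o(1/\log n)$ alone does not force $\sum_n b_n/n<\infty$ without some additional regularity on $b_n$ (monotonicity, for instance), so the partial-summation step needs a little more care than your sketch suggests.
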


Proposition~\ref{prop:kernel.acid} provides an answer to a question raised by \cite{west1991kernel} regarding the existence of a statistical model underlying kernel-based predictives. The result suggests that, although kernel estimators may not correspond to a proper statistical model at finite sample sizes, they do in the limit, as they define a sequence that is asymptotically exchangeable under suitable conditions. 
This interpretation is analogous to results of \cite{fortini2020quasi} on the predictive recursion for Newton's algorithm, \citep{new98}. In the rest of the section, we discuss kernels and bandwidths that satisfy the conditions in Proposition~\ref{prop:kernel.acid}. 
\vspace{0.3cm}

\textit{Choice of the kernel.} The condition \eqref{eq:kernel_tv} on $K$ is not particularly restrictive. As examples, it is shown to hold for Gaussian, Laplace and Uniform kernels. Details on the calculations for the uniform kernel can be found in the SM. 

\begin{example} (Gaussian kernel)  Let $K(x) = \phi(x;0,1)$ for $x\in \mathbb{R}$, which implies $K_n(x)=\phi(x;X_n,h_n^2)$. The convolution $(K_{n+1} * K_{i})(x;X_i)=\phi(x;X_{i}, h_{i}^{2}+h_{n+1}^{2})$, 
	following standard calculation of convolution of Gaussians. Condition \eqref{eq:kernel_tv} holds: $TV(\mu_{n+1*i}(\cdot,X_i), \mu_{i} (\cdot,X_i))\leq C \frac{h_{n+1}^2}{h_i^2},$
	where the inequality follows from Theorem 1.3 of \cite{devroye2018total} (see SM Section~\ref{sec:background_tv_bounds}). 
\end{example}

\begin{example}\label{ex:laplace} (Laplace kernel) Let $K(x)=\text{Laplace}(x;\mu,b)$, a Laplace kernel with location $\mu$ and scale $b$, so that
 $K_n(x)=\text{Laplace}(x;X_n,h_n)$. Then, $(K_{n+1} * K_{i})(x;X_i)=\text{Laplace}(x;X_i,h_{n+1}+h_i)$ 
 and  Proposition~\ref{prop:tv_laplace}\textit{(a)} in the SM shows that  condition \eqref{eq:kernel_tv} holds with upper bound on the total variation given by  $C\frac{h_{n+1}}{h_i}$ for some $C>0$.
\end{example}
\vspace{0.3cm}

\textit{Choice of the bandwidth.} The choice of the bandwidth is more critical. 

Condition \eqref{eq:kernel_bandwidth_cond} is satisfied by sequences $h$ of the type $h_n=e^{-n}$. This is because 
$\sum_{i=1}^n \frac{1}{h_i^d}= e \frac{e^{\epsilon(n+1)}-1}{e^d -1},$
which implies that 
\[
\frac{h_{n+1}^d}{n+1} \sum_{i=1}^{n} \frac{1}{h_i^d} = \frac{1}{(n+1)e^{d(n+1)}}e \frac{e^{d(n+1)}-1}{e^d -1}=\mathcal{O}\left(\frac{1}{n}\right).
\]
This suggests that the resulting sequence $(\alpha_n)_{n \geq 0}$ needs to approach the empirical distribution rapidly, as $n$ increases, 
in order for Proposition~\ref{prop:kernel.acid} to hold. This happens by virtue of the kernel ``shrinking" as $n$ increases due to the fast decay of the bandwidth. 

A bandwidth that decays at a polynomial rate (e.g., $h_n = n^{-b}$ with $b>0$) does not satisfy \eqref{eq:kernel_bandwidth_cond}. This may at first seem very restrictive, since a standard condition for kernel density estimation consistency is $h_n n \to \infty$. However, note that \eqref{eq:kernel_bandwidth_cond} is a condition on the sequence of predictive distributions, not on the sequence of estimators. This example highlights a crucial aspect: we are prescribing a rule for the bandwidth in generating synthetic data for predictive resampling, not for the bandwidth of the estimator used in predictive resampling. The latter can be chosen according to standard results on the frequentist consistency of kernel estimators.

A practical method to select the bandwidth is described in SM Section~\ref{sec:implementation_details}.


\begin{remark}\label{rmk:multi} (Multivariate kernels) As it is often the case in the kernel literature, results for the univariate setting generalize to the multivariate one. Specifically, let $K:\mathbb{R}^p\to \mathbb{R}$ be an integrable function satisfying $\int_{\mathbb{R}^p}K(u)\di u=1$, and consider the case of the isotropic bandwidth $h=(h_{n})_{n\in \mathbb{N}}$, meaning that each coordinate is scaled by the same $h_n$. In this setting, we write $K_n (x)= \frac{1}{h_n^p} K(\frac{x}{h_n})$ with $x\in \mathbb{R}^p$. Similarly, we will have that $\int_{\mathbb{R}^p} K_n (x) \di x=1$. Proposition~\ref{prop:kernel.acid} will hold with identical conditions. 
\end{remark}

\subsection{Kernel regression}\label{sec:kernel_regr}

Let $(Y_{n},X_{n})_{n \geq 1}$ be 
composed of a response variable $Y_{n}\in \mathbb{R}$, and a vector of covariates, $X_n\in \mathbb{R}^p$. Similar to the previous section, given an initial distribution $\alpha_0$,  kernels $K_y: \mathbb{R} \to \mathbb{R}$ and  $K_x: \mathbb{R}^{p} \to \mathbb{R}$, and a sequence $h$, for $A\in \mathcal{B}(\mathbb{R}^{p+1})$, define $\alpha_{n}(A):=\mathbb{P}((Y_{n+1},X_{n+1})\in A |\mathcal{G}_{n})$ recursively as follows \begin{equation} \label{eq:kernel_reg}
	\alpha_{n}(A)=\frac{n-1}{n}\alpha_{n-1}(A)+\frac{1}{n} \int_{A} \frac{1}{h_n} K_y\left(\frac{y-Y_{n}}{h_{n}}\right) \frac{1}{h^p_n} K_x\left(\frac{x-X_{n}}{h_{n}}\right)\di x \di y,
\end{equation}
The predictive distribution above implies that the one-step ahead conditional mean
\begin{equation}\label{eq:mean_kernel_regr}
\mathbb{E}(Y_{n+1}|x ,\mathcal{G}_n) =\frac{\sum_{i=1}^n Y_i \, K_x(\frac{x-X_i}{h_i})}{\sum_{i=1}^n   K_x(\frac{x-X_i}{h_i})}
\end{equation}
corresponds to a recursive version of Nadaraya-Watson kernel regression estimator; see \cite{gyorfi2006distribution} for more details and examples. 

Let $K(y,x)=K_y(y)K_x(x)$. If $K(y,x)$ is such that the corresponding measure and kernel convolution measure satisfies \eqref{eq:kernel_tv}, and $h$ satisfies \eqref{eq:kernel_bandwidth_cond}, $(Y_n, X_n)_{n \geq 1}$ is 
a.c.i.d. and asymptotically exchangeable by Proposition~\ref{prop:kernel.acid} (see Remark~\ref{rmk:multi}). Below,  two examples where this holds.

\begin{example} (Gaussian isotropic kernel) Choose both $K_y$ and $K_x$ to be standard univariate Gaussian kernels. The Gaussian isotropic kernel is such that $\frac{1}{h^p_n} K_x\left(\frac{||x-X_n||_2}{h_n}\right)=\phi_p\left(x;X_n,h^2_n I_p\right)$, where $\phi_p$ denotes the multivariate normal density and $I_p$ a diagonal matrix of size $p\times p$. Similarly,
	\[
	K_n(y,x) = \frac{1}{h_n}K_y \left(\frac{y-Y_n}{h_n}\right)\frac{1}{h^p_n}K_x\left(\frac{||x-X_n||_2}{h_n}\right) =\phi_{p+1}(y,x;(y_{n},x_{n}),h_{n}^{2} I_{p+1}).
	\]
	The kernel convolution is
	$(K_{n+1} * K_{i})(y,x;(Y_i,X_i))
	= \phi_{p+1}(y,x;(Y_i,X_{i}), (h_{i}^{2}+h_{n+1}^{2})I_{p+1})$.
The TV bound \eqref{eq:kernel_tv} is
	 $TV(\mu_{n+1*i}(\cdot,(Y_i,X_i)), \mu_{i}(\cdot,(Y_i,X_i))) \leq \frac{3}{2} (p+1)\frac{h^2_{n+1}}{h^2_{i}},$
by Theorem 1.1 of \cite{devroye2018total} (see SM Section~\ref{sec:background_tv_bounds}). Hence, if $h$ satisfies \eqref{eq:kernel_bandwidth_cond},  $(Y_n,X_n)_{n \geq 1}$ is a.c.i.d..

\end{example}

\begin{example}(Random Forests) Random Forests (RFs) are a popular family of learning algorithms for classification and regression. 
Most theoretical results for the RF exploit the connection between kernel methods and random forest; see \cite{biau2016random} for a review.

Under several assumptions -- the regressors $X\in [0,1]^p$ 
and are uniform distributed on the $p$-dimensional unit cube, regression trees have a fixed number of leaves $l$, the partition of each tree is such that at each stage the feature to be used in the split is chosen at random, and the split point is a draw from a uniform random variable -- \cite{breiman2000some} showed that, as the number of trees and the sample size goes to infinity, the mean regression function of a RF $f_{\text{RF}} (x)\approx f^{ker}_{\text{RF}} (x)$, where $f^{ker}_{\text{RF}} (x)$
\begin{equation}\label{eq:rf_kernel_mean}
f^{ker}_{\text{RF}} (x) =\frac{\sum_{i=1}^n Y_i \, \exp( -\lambda \mid\mid x-X_i\mid\mid_1)}{\sum_{i=1}^n   \exp( -\lambda \mid\mid x-X_i\mid\mid_1)},
\end{equation}
where $\lambda= \log l/p$. I.e., $f_{\text{RF}} (x)$ matches the regression function of a Nadaraya-Watson kernel regression estimator with a Laplace isotropic kernel. This approximation is referred to as kernel-based RF \citep{scornet2016random}, where the approximate nature is clear since $f_{\text{RF}}$ with a finite number of trees is piecewise constant while \eqref{eq:rf_kernel_mean} is not. 


Let $K_y=\text{Laplace}(y;\mu_y,b)$ and $K_{x}(x)=\prod_{j=1}^p \text{Laplace}(x_j;\mu_{x_j},b) $, and\\ $ K_n(y,x)=\text{Laplace}(y;\mu_y,h_n) \prod_{j=1}^p \text{Laplace}(x_j;\mu_{x_j},h_n)$. Define a sequence of predictive distributions $(\alpha_n)_{n\geq 0}$ via the online scheme \eqref{eq:kernel_reg}. The sequence $(Y_{n},X_{n})_{n \geq 1}$ sampled via $(\alpha_n)_{n\geq 0}$ has a mean regression function equal to 
\begin{equation*}
\mathbb{E}(Y_{n+1}|x ,\mathcal{G}_n)=\frac{\sum_{i=1}^n Y_i \, \exp( -h_i \mid\mid x-X_i\mid\mid_1)}{\sum_{i=1}^n   \exp( -h_i \mid\mid x-X_i\mid\mid_1)},
\end{equation*}
a recursive version of \eqref{eq:rf_kernel_mean} with bandwidths indexed by $i$. This corresponds to an online version of the kernel-based RF. 
 Following the same calculations as in Example~\ref{ex:laplace}, the kernel convolution is 
\begin{align*}
	(K_{n+1} * K_{i})(y,x;(Y_i,X_i))
	=& 
    \text{Laplace}(y;\mu_y,h_n+h_i) \prod_{j=1}^p \text{Laplace}(x_j;\mu_{x_j},h_n+h_i),
\end{align*}
and an upper bound for \eqref{eq:kernel_tv} is $C (p+1) \frac{h_{n+1}}{h_{i}}$ 
(see Proposition~\ref{prop:tv_laplace} b)). Thus, the sequence $(Y_{n},X_{n})_{n \geq 1}$ is a.c.i.d. 
\end{example}



\section{Numerical Illustrations}\label{sec:simulations}

In this section, we illustrate  predictive resampling using the a.c.i.d. predictive schemes presented in the previous section. 
We focus on the nonparametric schemes based on kernels since illustrations for the parametric examples can be found in \cite{holmes2023statistical,garelli2024asymptotics,fong2024asymptotics}. The code for reproducibility is available online. 
The goal of the section is not to advocate in favor of one bootstrap scheme versus another, or predictive resampling versus standard ways of computing the posterior. 
Rather, we want to provide numerical evidence that the examples in Section~\ref{sec:examples} provide viable alternatives to existing proposals. As argued by \cite{garelli2024asymptotics}, optimal predictive schemes will vary based on the problem and data at hand. To our knowledge, there does not yet exist an established framework in this recent literature to make such a comparison.

The martingale posterior can be defined for any parameter of interest. We consider two estimands $\theta$: in Subsection~\ref{sec:ill_density}, a density; in Subsection~\ref{sec:ill_regression}, a regression function. Subsection~\ref{sec:ill_data} includes applications to real data. Kernel-based Predictive (KerP) schemes are implemented in \texttt{R}. We consider three types of kernels: Gaussian, Uniform, and Laplace. Methods to select the bandwidth $h$ are described in SM Section~\ref{sec:implementation_details}: they differ depending on whether we consider univariate or multivariate data, but all of them rely on standard selection rules implemented in \texttt{R}. More details on the implementation, a pseudo-code, and a sensitivity study on the bandwidth selection method can be found in SM Section~\ref{sec:implementation_details}.

In density estimation, we compare KerP to predictive resampling based on the Gaussian Copula (GauC) algorithm of \cite{hahn18} (implemented in \cite{fong2021martingale}, code available at \url{https://github.com/edfong/MP}) and Bayesian inference based on Gaussian Dirichlet Process Mixtures (DPM) (implemented in the \texttt{dirichletprocess} \texttt{R} package of \cite{ross2018dirichletprocess}). In regression, we compare KerP to regression function estimates of Gaussian Processes (GP) (implemented in the \texttt{laGP} \texttt{R} package of \cite{gramacy2016lagp}) and Gaussian DPMs. GPs directly target the mean regression function, while for DPMs and kernel-based methods, we estimate the joint density of $(Y, X)$ and compute numerically $f(x):=\mathbb{E}(Y|x)$. Further details on alternative methods are in SM Section~\ref{sec:alternatives}.

Given an estimand $\theta$, let $\hat{\theta}$ denotes the posterior mean, and $\hat{\theta}_{2.5}$ and $\hat{\theta}_{97.5}$ respectively the posterior $.25$ and $.975$ quantiles. Given a grid $\{x_i\}_{1:N}$, we use the following statistics to compare the methods:  $env=\frac{1}{N} \sum_{i=1}^N\mathbb{I} (\hat{\theta}_{2.5}(x_i) \leq \theta(x_i) \leq \hat{\theta}_{97.5}(x_i))$ which measures if function $\theta$ is covered by the $95\%$ credible region along the grid (a proxy for coverage), $dev=\frac{1}{N} \sum_{i=1}^N| \hat{\theta}(x_i)-\theta(x_i)|$ which measures the bias in L1 norm, $awd=\frac{1}{N} \sum_{i=1}^N(\hat{\theta}_{97.5}(x_i)-\hat{\theta}_{2.5}(x_i))$ which measures the average width of the interval. $rdev$ and $rwd$ will denote the corresponding metrics normalized by the true values. In the regression data sets, we will compute also the mean squared error $MSE=\frac{1}{n_{test}} \sum_{i=1}^{n_{test}} (y_i -\hat{f}(x_i))^2$. We will report also computing times as obtained on an iMac M1.

\subsection{Density Estimation}\label{sec:ill_density}

Let $Y_n | P \iidsim P$, where $P$ is an absolutely continuous mixture model with a finite number of components and density $p$. $P$ is random: for each new dataset we sample the number of components, the weights of the mixture, the type of kernel (Gaussian or Student-t ($\nu=10$)), and the location and scale parameters of each kernel; see SM Section~\ref{sec:data_generation} for details.  We consider $100$ simulated data sets sampled from $100$ distinct distributions for two sample sizes ($n=50,200$). GauC requires a grid to numerically approximate a density (DPM and KerP do not). We fixed it to $\{x_i\}_{i:N}:=\{-4,\ldots, 3.95\}*\hat{\sigma} + \bar{X}$, where $\bar{X}, \hat{\sigma}$ are respectively the sample mean and standard deviation, and $N=100$. The same grid is also used to compute $env,dev,$ and $awd$. Figure~\ref{fig:density} depicts estimates (solid lines), $95\%$ credible regions (gray shaded area), true density (dashed line), and data (points) for one possible realization $P$ and data for $n=200$. In this example, all methods perform similarly. 

\begin{figure}[!t]
        \begin{center}
            \includegraphics[width=14cm, height=6cm, keepaspectratio ]{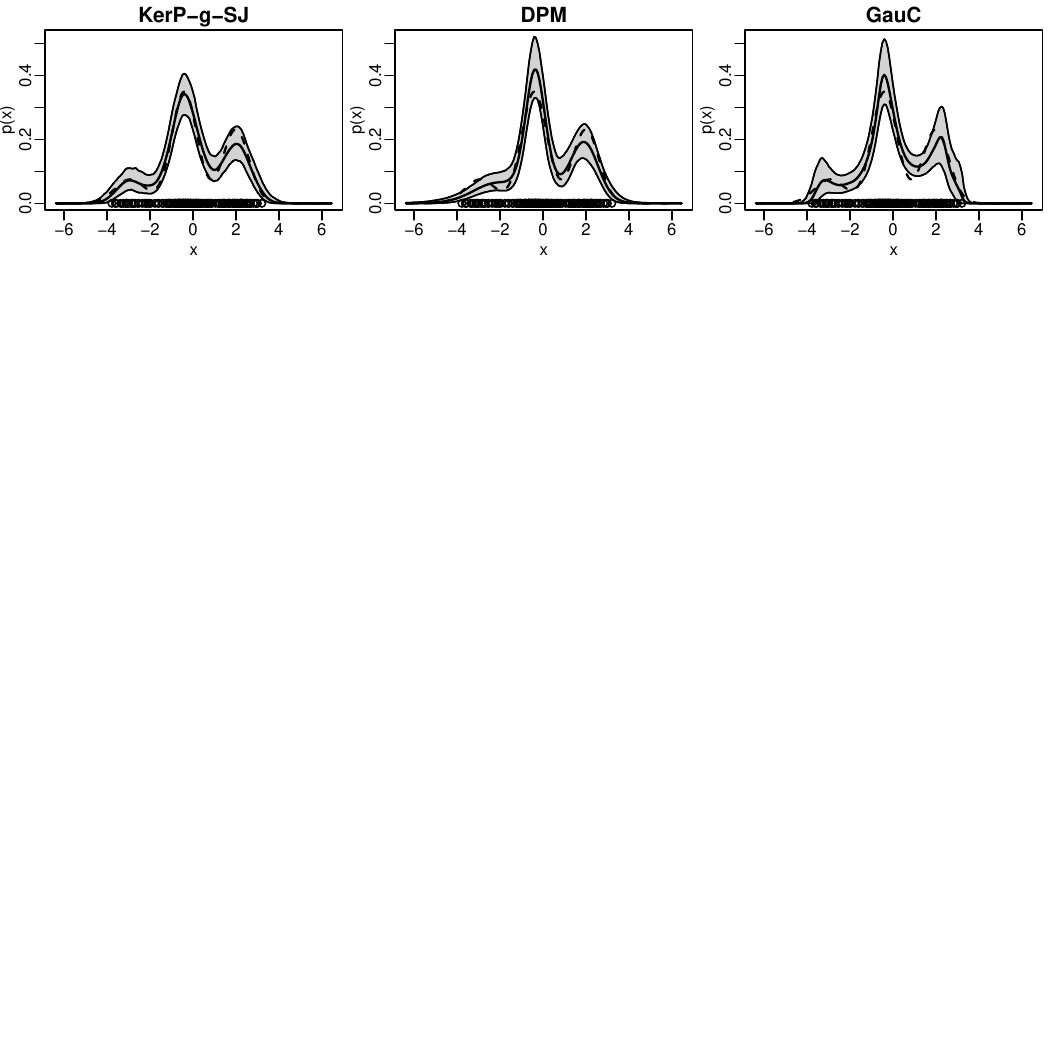}
       \end{center}
         \spacingset{1} 
        \caption{\textbf{Simulations: density estimation (one example).} \small{Estimates for one of the random $P$ ($n=200$). In each panel: true $p$ (dashed line), posterior mean $\hat{p}$ (solid line), and $95\%$ credible regions (gray shaded area). Dots depict the observations. Three panels differ by the method: KerP with Gaussian kernel and bandwidth via the method of  \cite{sheather1991reliable} (left), DPM (middle), and GauC (right).}}
        \label{fig:density}
    \end{figure}
    
\begin{table}[!t] \centering 
\spacingset{1} 
  \caption{\textbf{Simulations: density estimation.} \small{Median across $100$ data sets for different sample sizes (n). $env$ is a measure of coverage (the higher the better), $dev$ is an L1 bias (the lower the better), and $awd$ is the average width of the credible bands (the lower the better). In KerP, g stands for Gaussian kernel, l for Laplace, u for uniform, and SJ for the bandwidth selection of \cite{sheather1991reliable}.} }
  \label{tab:density}
\begin{tabular}{@{\extracolsep{5pt}} l|ccc|ccc} 
	\\[-1.8ex]\hline 
 & \multicolumn{3}{c }{n = 50 }& \multicolumn{3}{ c}{n = 200 } \\
Method & $env$ & $dev$ & $awd$& $env$ & $dev$ & $awd$ \\ 
\hline 
GauC & $0.472$ & $0.016$ & $0.029$ & $0.491$ & $0.009$ & $0.019$ \\ 
DPM & $0.938$ & $0.013$ & $0.055$ & $0.922$ & $0.007$ & $0.031$ \\ 
KerP-g-SJ & $0.838$ & $0.016$ & $0.056$ & $0.784$ & $0.009$ & $0.031$ \\ 
KerP-l-SJ & $0.688$ & $0.020$ & $0.054$ & $0.628$ & $0.011$ & $0.029$ \\ 
KerP-u-SJ & $0.759$ & $0.015$ & $0.073$ & $0.750$ & $0.009$ & $0.041$ \\ 
\hline 
\end{tabular} 
\end{table}

Table~\ref{tab:density} summarizes the median metrics across the $100$ repetitions for the data considered. The performance is comparable across methodologies. In this numerical study, DPM achieves the lowest bias ($dev$) and highest coverage ($env$) without having the widest credible bands ($awd$). KerP based on the Gaussian kernel is comparable, even if the coverage is lower despite the equal credible band width. Comparable numerical performance is achieved more than twice as fast as MCMC-based methods: in a single run ($n=200$), DPM takes approximately 25 seconds, whereas GauC and KerP require only about 10 and 11 seconds, respectively. The resampling method can be significantly accelerated when run on a computing cluster that supports parallelization, see \cite{fong2021martingale}. GauC's computational complexity depends on the grid used for approximation, whereas KerP is unaffected by this.

More interesting is the relative performance between kernel methods. It is no surprise that the Gaussian kernel outperforms the others, which are less commonly used for density estimation. The uniform kernel may achieve the lowest bias, but it has wider credible regions and lower coverage. The Laplace kernel is the worst-performing one. GauC has the narrowest credible bands ($awd$), which leads to a lower coverage ($env$). KerP's performance improves with a different bandwidth; see SM Section~\ref{sec:sensitivity_bw}.

\subsection{Regression Function}\label{sec:ill_regression}

Let $Y_n = f(X_n) +\epsilon_n$ with $\epsilon_n\iidsim P$ and $X_n\in \mathbb{R}$. For each regression function $f$, we sample $n=200$ training data with $X_n \iidsim \mathcal{U}(0,5)$, while test points are defined on a regular grid $\{x_i\}_{i:N}:=\{-4,\ldots, 3.95\}$ with $N=100$ evenly spaced values. To simulate data, we consider three Data-Generating Mechanisms (DGM) where we vary how we sample $f$ and the noise distribution $P$.  DGMs 1 and 2 have $f\sim GP$ with mean 0 and a Gaussian kernel. In DGM1, $P$ is Gaussian zero mean and variance one, while in DGM2, $P$ is Student-t zero mean and $\nu=5$. DGM3 has $f\sim GP$ with mean 0 and a kernel obtained summing a Gaussian and Exponential one, $P$ is Gaussian zero mean and variance one. More details and the exact parametrization are in SM Section~\ref{sec:data_generation}. For each combination, we simulate $100$ data sets. We compute $env,dev,awd$, and $MSE$ on the test data for KerPs, GPs, and DPMs. Figure~\ref{fig:regression} depicts estimates (solid lines), $95\%$ credible regions (gray shaded area), true regression function (dashed line), and data (points) for one possible realization of DGM1. In this example, KerP (Laplace kernel and CV bandwidth) and GP recover $f$, but GP has wider credible bands. DPM oversmooths and fails to capture some local features of $f$, and has credible bands comparable to KerP. KerP is much faster than DPM ($1.2$ minutes vs. $4.3$ minutes). Overall, both simulation studies provide further evidence that predictive resampling offers a competitive alternative to computationally intensive MCMC-based methods.

Table~\ref{tab:regression} summarizes the median of the 100 repetitions of the statistics considered for the three DGMs. The performance is reversed compared to the previous section, with KerP with the Laplace and Uniform Kernel having the best performance across the metrics. GPs achieve an identical performance except for much wider credible bands (higher $awd$). 
Interestingly, KerP with Laplace kernel achieves the lowest test MSE, even in DGM1 when GP is the true DGM. Recall that this algorithm is linked to an infinite limit of the RF, a connection that may explain the good performance. The good performance of KerP with a uniform kernel may be explained by the uniform sampling of the covariates. KerP with the Gaussian kernel and Gaussian DPMs oversmooth the regression function, but KerP's performance is more aligned with the alternatives. 

\begin{figure}[!t]
        \begin{center}
            \includegraphics[width=15cm, height=6cm, keepaspectratio]{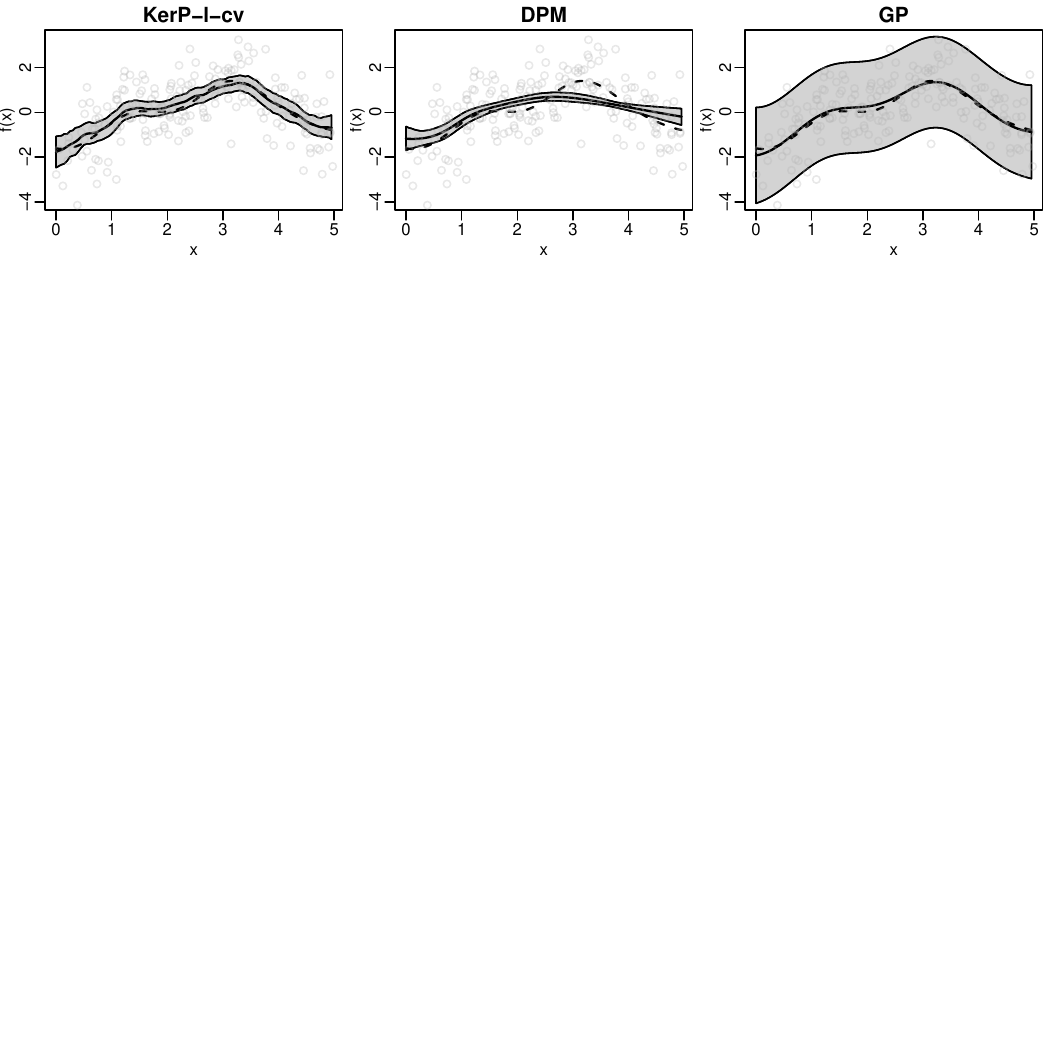}
        \end{center}
        \spacingset{1} 
        \caption{\textbf{Simulations: regression function (one example).} \small{Estimates for one of the random $f$ ($n=200$, DMG1) . In each panel: true $p$ (dashed line), posterior mean $\hat{p}$ (solid line), and $95\%$ credible regions (gray shaded area). Dots depict the observations. Three panels differ by the method: KerP with Laplace kernel and bandwidth via cross-validation (left), DPM (middle), and GP (right).}}
        \label{fig:regression}
    \end{figure}

\begin{table}[!t] \centering 
\spacingset{1} 
  \caption{\textbf{Simulations: regression function.} \small{Median across $100$ data sets for three different data generating mechanisms ($n_{train}=200$,$n_{test}=100)$. $env$ is a measure of coverage (the higher the better), $dev$ is an L1 bias (the lower the better), and $awd$ is the average width of the credible bands (the lower the better), $MSE$ is the sample mean squared error on test data (the lower the better). In KerP, g stands for Gaussian kernel, l for Laplace, u for uniform, cv for the bandwidth selection with smoothed cross-validation.}}
  \label{tab:regression}
\scalebox{0.80}{\begin{tabular}{@{\extracolsep{5pt}} l|cccc|cccc|cccc} 
\\[-1.8ex]\hline 
& \multicolumn{4}{c}{DGM1: Gaussian $\epsilon$} & \multicolumn{4}{c}{DGM2: Student-t $\epsilon$} &\multicolumn{4}{c}{DGM3: Gauss-Exp Kernel}\\
Method & $env$ & $dev$ & $awd$ & $MSE$ & $env$ & $dev$ & $awd$ & $MSE$ & $env$ & $dev$ & $awd$ & $MSE$ \\ 
\hline \\[-1.8ex] 
DPM & 0.33 & 0.34 & 0.38 & 1.16 & 0.35 & 0.31 & 0.38 & 1.13 & 0.43 & 0.33 & 0.47 & 1.71 \\ 
GP & 1.00 & 0.15 & 4.03 & 1.05 & 1.00 & 0.16 & 4.03 & 1.04 & 1.00 & 0.19 & 5.03 & 1.60 \\ 
KerP-g-cv & 0.80 & 0.19 & 0.64 & 1.05 & 0.81 & 0.18 & 0.63 & 1.05 & 0.83 & 0.22 & 0.74 & 1.60 \\ 
KerP-l-cv & 0.89 & 0.16 & 0.69 & 1.03 & 0.89 & 0.16 & 0.68 & 1.03 & 0.91 & 0.20 & 0.83 & 1.58 \\ 
KerP-u-cv & 0.91 & 0.17 & 0.71 & 1.04 & 0.90 & 0.17 & 0.70 & 1.04 & 0.88 & 0.21 & 0.87 & 1.59 \\ 
\hline 
\end{tabular} }
\end{table} 

\subsection{Data Analysis}\label{sec:ill_data}


\noindent\textbf{Galaxy data.} This data set is a standard benchmark for Bayesian nonparametric density estimation and clustering (\cite{roeder1990density}, publicly available in \texttt{R}). It contains $n=82$ galaxies from six conic sections of an unfilled survey of the Corona Borealis region. Figure~\ref{fig:galaxy} includes replicates of the analysis of \cite{fong2021martingale} for GauC and DPM (middle and right panels) and predictive resampling posterior obtained with KerP with Gaussian kernel and bandwidth sequence selected via the method of \cite{sheather1991reliable} ($B=2000$, $M=3000$). KerP estimates are more closely aligned to DPM than GauC, with a mode around $17$ km/sec not captured by GauC.

\begin{figure}[!t]
        \begin{center}
            \includegraphics[width=14cm, height=6cm, keepaspectratio ]{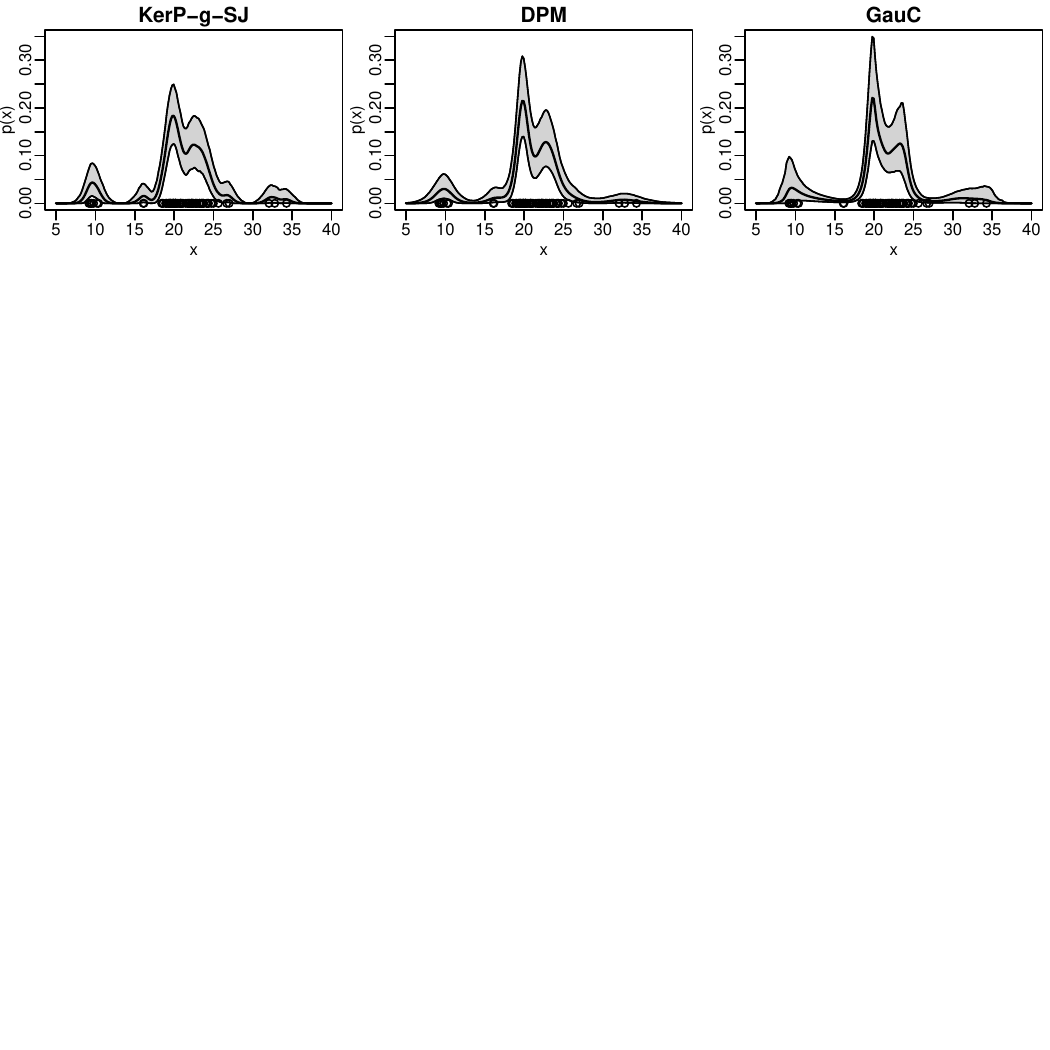}
        \end{center}
        \spacingset{1} 
        \caption{\textbf{Galaxy data.} \small{In each panel: posterior mean $\hat{p}$ (solid line), and $95\%$ credible regions (gray shaded area). Dots depict the observations. Three panels differ by the method: KerP with Gaussian kernel and bandwidth via \cite{sheather1991reliable} (left), DPM (middle), and GauC (right).}}
        \label{fig:galaxy}
    \end{figure}

\noindent\textbf{Air quality data.} In this example, we estimate a bivariate density of the variables radiation and ozone of the \texttt{airquality} data set in \texttt{R}. They are daily measurements (from 01-05-1973  to 30-09-1973) collected in New York of solar radiation (in Langleys) and mean ozone (in parts per billion). There are 42 missing observations which we remove, resulting in $n=111$. Figure~\ref{fig:airquality_den} depicts contour plots of the mean density estimates obtained via predictive resampling of GauC (same parametrization as in \cite{fong2021martingale}) and KerP with Gaussian kernel with bandwidth selected via the multivariate plug-in method of \cite{wand1994multivariate} ($M=2000$, $B=500$). This results in two distinct bandwidth sequences for the two dimensions, similar to what is done for GauC where each dimension has its correlation parameter. Contour plots of the posterior standard deviation for the two methods are in the SM (Figure~\ref{fig:airquality_sd}). Compared to GauC, KerP's estimate is more diffuse and the mode in the upper corner of the panel appears to be more skewed. KerP's predictive resampling posterior has wider credible regions around the two modes (Figure~\ref{fig:airquality_sd}).

\begin{figure}[!t]
        \begin{center}
            \includegraphics{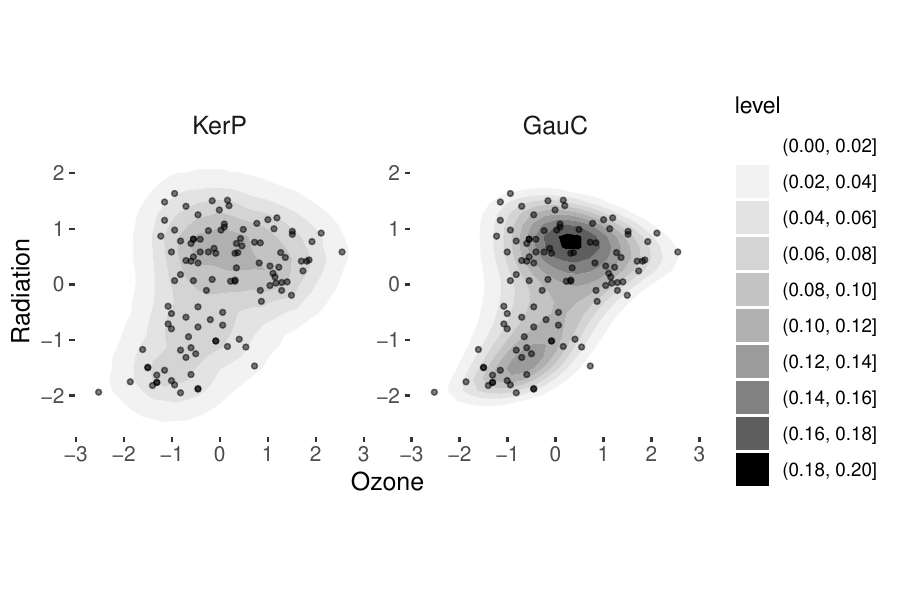}
        \end{center}
        \spacingset{1} 
        \caption{\textbf{Air quality data: posterior mean density.}  \small{In each panel: contour plot of the posterior mean. Dots depict the observations. The two panels differ by the method: KerP with Gaussian kernel and bandwidth via the multivariate plug-in method of  \cite{wand1994multivariate} (left), and GauC with parameters as in \cite{fong2021martingale}(right).}}
        \label{fig:airquality_den}
    \end{figure}

\noindent \textbf{LIDAR data.} It consists of $221$ observations from a light detection and ranging (LIDAR) experiment. It includes a regressor, the distance traveled before the light is reflected back to its source ($x$, range), and a dependent variable,  the logarithm of the ratio of received light from two laser sources ($y$, log intensity). We estimate the regression function with the full data sets via KerP and DPM (Figure~\ref{fig:lidar_regression}) and the conditional densities at $x=0$ and $x=-3$ (SM Figures~\ref{fig:lidar_density0} and \ref{fig:lidar_density-3}). In addition, using holdout data sets ($10$ random splits, $80\%$ training, $20\%$ test) we do a prediction exercise using the regression function estimated on the training data. For KerP, we used a Gaussian kernel with bandwidth selected via the multivariate plug-in method of \cite{wand1994multivariate} ($M=1000$, $B=600$). 

In Figure~\ref{fig:lidar_regression}, KerP regression function appears less smooth than DPM and with wider credible regions. The two estimates differ slightly for positive values of range, with DPM having a linear behavior, while the KerP has a slightly positive curvature. KerP $MSE$ is $0.1003$ and DPM $MSE$ is $0.1085$, approximately $8\%$ higher. SM Figure~\ref{fig:lidar_density0} depicts a conditional density at a value $x=0$, which is in the middle of the data set. The DPM estimate appears remarkably smoother and with much narrower credible bands than KerP. As in \cite{fong2021martingale}, we estimated the conditional density at $x=-3$, which is well outside the interval of observed values (SM Figure~\ref{fig:lidar_density-3}). KerP does not perform well, with wide credible bands and bumpy estimates. This illustrates a known limitation of kernel methods: with low bandwidth and no nearby observations, there is no borrowing of information. Additionally, the Gaussian kernel may be ill-suited in this scenario, because the regressor (range) forms an almost regular grid.

\begin{figure}[!t]
        \begin{center}
            \includegraphics{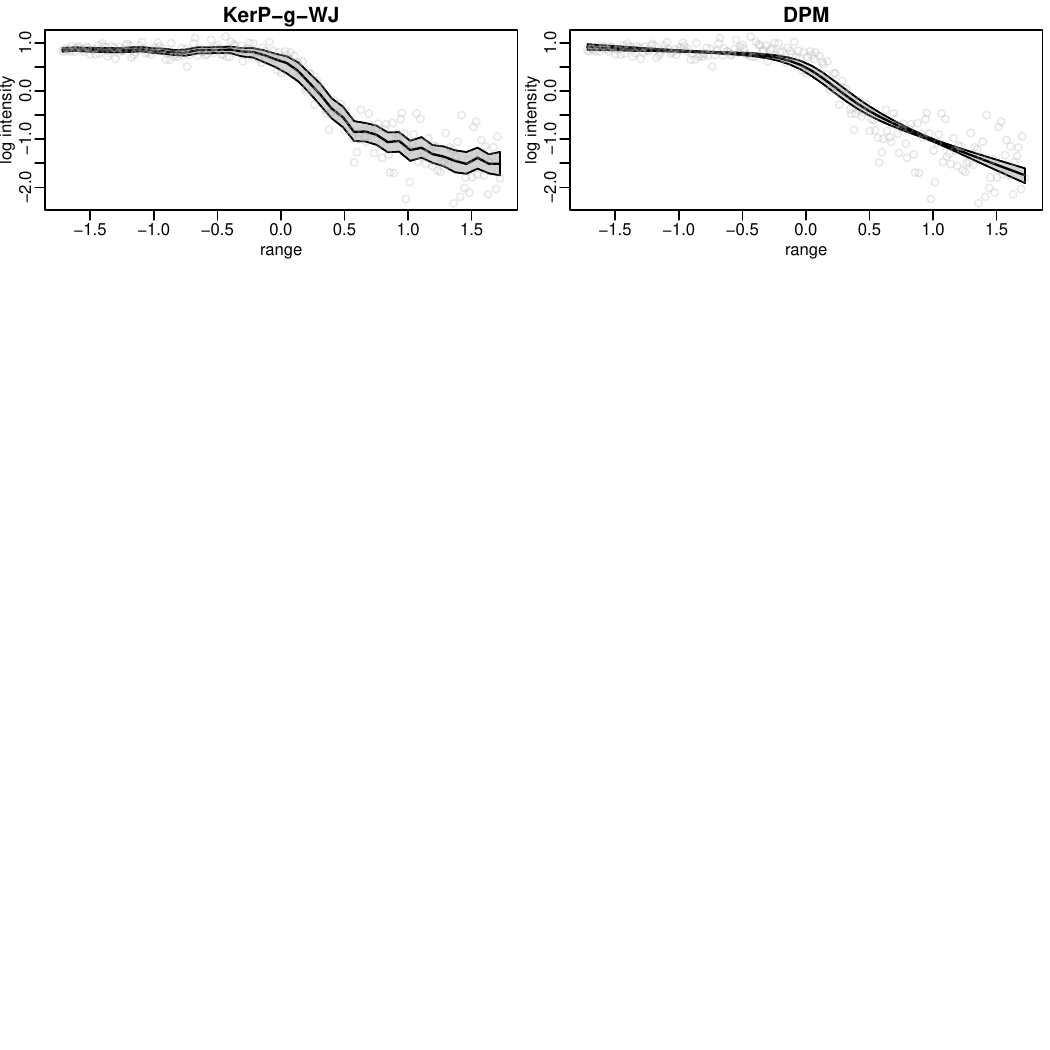}
        \end{center}
        \spacingset{1} 
        \caption{\textbf{LIDAR: regression function.} \small{In each panel: posterior mean $\hat{f}$ (solid line), and $95\%$ credible regions (gray shaded area). Dots depict the observations. The two panels differ by the method: KerP with Gaussian kernel and bandwidth via the multivariate plug-in method of \cite{wand1994multivariate} (left), and DPM (right).}}
        \label{fig:lidar_regression}
    \end{figure}

\section{Discussion}\label{sec:conclusion}

We defined a new  class of predictive schemes suitable for bootstrap methods like predictive resampling. 
It was known that, if one only requires the sequence of random variables to be asymptotically exchangeable, then the predictive rule does not need to be a martingale. The main contribution of this paper is the definition of a new property - almost conditionally identically distributed variables - which provides a practical criterion for determining whether an algorithm can be used for predictive resampling. 
We demonstrated that existing and new predictive rules satisfy the a.c.i.d. property: notably, kernel estimators, including an approximation to the random forests, and the parametric Bayesian bootstrap. 

To establish the a.c.i.d. property, we relied on total variation bounds. This approach can be substantially more tractable than verifying almost sure weak convergence of a predictive sequence, although it is not always feasible or straightforward. Developing conditions that imply the a.c.i.d. property while working with a smaller class of sets would be valuable. A way could be using algebras that generate the Borel sigma-algebra and monotone class-type arguments. An alternative route may involve bounding weaker metrics (e.g., Wasserstein distance) that enable control of the total variation. For instance, \cite{chae2020wasserstein} demonstrate that under sufficient smoothness of the underlying densities, the total variation can be bounded by a power of the Wasserstein.

More technical tools for almost-supermartingale random measures are necessary. In the martingale case, there are central limit theorems \citep{berti2004limit} and sufficient conditions for the limiting model to be absolutely continuous \citep{berti2013exchangeable}. These results are of interest to statisticians: for example, \cite{fortini2020quasi} use CLTs to build asymptotic credible intervals; \cite{fortini2020quasi} also characterize the limit model of Newton’s algorithm, and \cite{fong2021martingale} that of the Gaussian copula. Similar results would enable analogous developments in this context for specific procedures; e.g., kernel estimators.

From a methodological standpoint, it would be of interest to examine the a.c.i.d. property for other algorithms in machine learning and statistical learning. We did not investigate wavelet-based or orthonormal basis estimators, and our analysis of random forests was based on a simplified approximation: there are alternatives to this such as the proposals in \cite{scornet2016random}. Among the methods we considered, kernel-based predictive resampling stands out as especially promising. Compared to the Bayesian bootstrap, kernel methods are better suited for estimating continuous parameters; compared to copula-based approaches, they do not require a discretization grid, and benefit from an extensive literature and widespread use. However, it remains crucial to develop principled procedures for selecting key parameters, such as the bandwidth, when using kernel methods in this setting.

\bibliographystyle{agsm}
\bibliography{mybibfile}

\newpage
\appendix


\section{Background and Auxiliary results}

\subsection{Almost Supermartingales}
For ease of reading, we recall some results about almost super-martingale from \cite{robbins1971convergence}. We present these results in a slightly simplified formulation, which is still enough for the proof of Theorem~\ref{thm:asym.exch}. Let $(Z_{n})_{n\geq 0}$ and $(\xi_{n})_{n\geq 0}$ be two sequences of random variables adapted to a filtration $(\mathcal{G}_{n})_{n\geq 0}$. Also, let $\xi_{n}$ be non-negative for all $n\geq 0$. Then, $(Z_{n})_{n\geq 0}$ is a $(\xi_{n})_{n\geq 0}$-\textit{almost supermartingale} if for all $n\geq 0$,
\begin{equation*}
    \mathbb{E}(Z_{n+1}|\mathcal{G}_{n}) \leq Z_{n} + \xi_{n} \ \ \ \ \ \text{a.s.}.
\end{equation*}

\cite{robbins1971convergence} have shown that almost supermartingales satisfy counterparts of the Doob convergence theorem and maximal inequality, valid for supermartingales. Specifically, 

\begin{theorem}[Theorem 1, \cite{robbins1971convergence}] \label{th:robsig}
    If $(Z_{n})_{n\geq 0}$ is a $(\xi_{n})_{n\geq 0}$-almost supermartingale, then $\lim_{n\to \infty} Z_{n}$ exists and is finite on the set $\{ \omega\in \Omega: \sum_{n=0}^{\infty} \xi(\omega) < \infty \}$.
\end{theorem}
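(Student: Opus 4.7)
The statement is the (simplified) discrete-time Robbins--Siegmund almost-supermartingale convergence theorem, and I would prove it by the classical ``compensation $+$ Doob $+$ localization'' recipe. The key move is to absorb the positive drift $\xi_n$ into a compensating cumulative sum so that one is left with a genuine supermartingale, and then invoke Doob's supermartingale convergence theorem. I note that the paper's simplified statement needs $Z_n \geq 0$ for the conclusion (true in the intended application where $Z_n = \alpha_n(A) \in [0,1]$), and I will use this in the localization step.

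\textbf{Main steps.} First, I would set $Y_n := Z_n - \sum_{k=0}^{n-1}\xi_k$. Since $\xi_n$ is $\mathcal{G}_n$-measurable, a one-line computation gives $\mathbb{E}(Y_{n+1}\mid\mathcal{G}_n) = \mathbb{E}(Z_{n+1}\mid\mathcal{G}_n) - \sum_{k=0}^{n}\xi_k \leq (Z_n+\xi_n) - \sum_{k=0}^n\xi_k = Y_n$, so $(Y_n)_{n \geq 0}$ is a true $\mathcal{G}$-supermartingale. If I could apply Doob's theorem at this point I would be done, but the subtle issue is that $\sum_n \xi_n < \infty$ holds only almost surely and not uniformly, so $Y_n$ need not be bounded below nor $L^1$-bounded. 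I would circumvent this via the stopping times $T_N := \inf\{n \geq 0 : \sum_{k=0}^{n-1}\xi_k > N\}$, which are $\mathcal{G}$-stopping times because $(\xi_k)$ is adapted. The stopped process $Y_{n \wedge T_N}$ remains a supermartingale by optional stopping, and on $\{n < T_N\}$ one has $Y_{n\wedge T_N} \geq -N$ thanks to $Z_n \geq 0$. Hence $Y_{n\wedge T_N} + N$ is a non-negative supermartingale, so Doob's theorem gives a.s. convergence of $Y_{n \wedge T_N}$ to an integrable limit.

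\textbf{Final assembly and main obstacle.} On the event $\{T_N = \infty\} = \{\sum_{k=0}^{\infty}\xi_k \leq N\}$ the unstopped and stopped processes coincide, so $Y_n$ converges a.s. there; since the compensator $\sum_{k=0}^{n-1}\xi_k$ also converges on this event, $Z_n = Y_n + \sum_{k=0}^{n-1}\xi_k$ converges a.s. to a finite limit. Taking the union over $N \in \mathbb{N}$ exhausts $\{\sum_{n=0}^{\infty}\xi_n < \infty\}$, which is exactly the set on which the theorem asserts convergence. The main obstacle I anticipate is precisely this localization: without a uniform bound on $\sum_n \xi_n$, the compensated supermartingale has no obvious lower bound, and it is the combination of stopping at the level-$N$ hitting time of the compensator together with the implicit non-negativity of $Z_n$ that supplies one and keeps the argument elementary.
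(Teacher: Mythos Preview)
The paper does not prove this statement at all: it is recalled in the supplementary material purely as background and attributed to \cite{robbins1971convergence}, with no argument given. Your proposal is therefore not competing with any proof in the paper; it is the standard Robbins--Siegmund argument (compensate, localize, apply Doob), and the overall strategy is correct, including your observation that the implicit non-negativity of $Z_n$ (which holds in the paper's application since $Z_n=\alpha_n(A)\in[0,1]$) is what makes the localized supermartingale bounded below.

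One small technical slip to tighten: with your definition $T_N=\inf\{n:\sum_{k=0}^{n-1}\xi_k>N\}$, at the stopping time itself one has $\sum_{k=0}^{T_N-1}\xi_k>N$, so $Y_{T_N}$ need not satisfy $Y_{T_N}\geq -N$, and hence $Y_{n\wedge T_N}+N$ is not guaranteed non-negative on $\{T_N\leq n\}$. The fix is immediate: define instead $T_N=\inf\{n\geq 0:\sum_{k=0}^{n}\xi_k>N\}$, which is still a $\mathcal{G}$-stopping time because $\xi_k$ is $\mathcal{G}_k$-measurable; then on $\{T_N\geq n\}$ and on $\{T_N<n\}$ alike one has $\sum_{k=0}^{(n\wedge T_N)-1}\xi_k\leq N$, giving $Y_{n\wedge T_N}\geq -N$ everywhere. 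With this adjustment your argument goes through verbatim.
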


\begin{proposition}[Proposition 2, \cite{robbins1971convergence}] \label{eq:max.ineq.robb}
     If $(Z_{n})_{n\geq 0}$ is a $(\xi_{n})_{n\geq 0}$-almost supermartingale, then for any $a>0$, $m\in \mathbb{N}$, and $n\geq m$
     \begin{equation*}
         \mathbb{P}\left( \max_{m\leq k\leq n} Z_{k} \geq a |\mathcal{G}_{m}\right) \leq \frac{1}{a}\left[Z_{m} + \mathbb{E}\left(\sum_{k=m}^{n-1}\xi_{k} |\mathcal{G}_{m}\right)\right]
     \end{equation*}
\end{proposition}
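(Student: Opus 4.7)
The plan is to reduce the problem to a genuine supermartingale via a compensator, then mimic the standard stopping-time argument behind Doob's maximal inequality. First, I would introduce the compensated process $Y_k := Z_k - \sum_{j=m}^{k-1} \xi_j$ for $k \geq m$, with the convention that an empty sum is zero, so that $Y_m = Z_m$. A direct calculation using the defining inequality $\mathbb{E}(Z_{k+1} \mid \mathcal{G}_k) \leq Z_k + \xi_k$ gives
$$\mathbb{E}(Y_{k+1} \mid \mathcal{G}_k) = \mathbb{E}(Z_{k+1} \mid \mathcal{G}_k) - \sum_{j=m}^{k} \xi_j \leq Z_k - \sum_{j=m}^{k-1} \xi_j = Y_k,$$
so $(Y_k)_{k \geq m}$ is a bona fide $(\mathcal{G}_k)_{k\ge m}$-supermartingale.

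Next, I would introduce the first-hit stopping time $\tau := \inf\{k \geq m : Z_k \geq a\}$, with the convention $\inf \emptyset = +\infty$. The event of interest coincides with $\{\tau \leq n\}$, and on this event $Z_\tau \geq a$ by construction. Using that $Z_n \geq 0$ (which is implicit in the Robbins--Siegmund framework and holds automatically in every application of this inequality in the paper, since the relevant $Z_n$'s are conditional probabilities), the pointwise bound $a\,\mathbb{1}_{\{\tau \leq n\}} \leq Z_{\tau \wedge n}$ holds, and taking $\mathbb{E}(\cdot \mid \mathcal{G}_m)$ on both sides yields
$$a\,\mathbb{P}(\tau \leq n \mid \mathcal{G}_m) \leq \mathbb{E}(Z_{\tau \wedge n} \mid \mathcal{G}_m).$$

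Finally, because $\tau \wedge n$ is bounded by the constant $n$, optional stopping applied to the supermartingale $Y$ gives $\mathbb{E}(Y_{\tau \wedge n} \mid \mathcal{G}_m) \leq Y_m = Z_m$. Rewriting this in terms of $Z$ and using $\xi_j \geq 0$ to extend the stochastic sum up to the deterministic index $n-1$ produces
$$\mathbb{E}(Z_{\tau \wedge n} \mid \mathcal{G}_m) \leq Z_m + \mathbb{E}\Bigl(\sum_{j=m}^{n-1} \xi_j \,\Big|\, \mathcal{G}_m\Bigr),$$
and combining this with the bound from the previous paragraph and dividing by $a$ produces the claim. I do not anticipate a serious obstacle: the only delicate points are the implicit non-negativity of $Z$ and the applicability of optional stopping, both of which are immediate in the setting at hand (the latter because $\tau \wedge n$ is bounded by the constant $n$, which sidesteps the usual uniform integrability considerations).
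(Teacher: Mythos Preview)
Your argument is correct and is essentially the standard Robbins--Siegmund proof: compensate to a genuine supermartingale, hit it with a bounded stopping time, and use non-negativity of $Z$ on the complement of the hitting event. The paper does not supply its own proof of this proposition; it is merely recalled in the background section as Proposition~2 of \cite{robbins1971convergence}, so there is nothing to compare against beyond noting that your route matches the original source.
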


\subsection{Bounds on the total variation distance}\label{sec:background_tv_bounds}

\noindent \textit{TV bounds between Gaussians.} The following results are given in \cite{devroye2018total}. Theorem~\ref{thm:devroye_univariate} is an upper and lower bounds between univariate Gaussians with arbitrary mean and variance. Theorem~\ref{thm:devroye_multivariate} is the corresponding multivariate result under the assumption of equal mean vectors.

\begin{theorem}\label{thm:devroye_univariate}(Theorem 1.3, \cite{devroye2018total}) Let $\mu_1,\mu_2 \in \mathbb{R}$ and $\sigma^2_1,\sigma^2_2 \in \mathbb{R}^+$, the following upper and lower bounds hold
\begin{align*}
\frac{1}{200} \min &\left\{1,  \max \left\{\frac{3|\sigma^2_1-\sigma^2_2|}{\sigma^{2}_{1}}, \frac{40|\mu^2_1-\mu^2_2|}{\sigma_{1}} \right\} \right\}   \leq  TV (\text{N}(\mu_1, \sigma^2_1),\text{N}( \mu_2, \sigma^2_2) \leq \frac{3|\sigma^2_1-\sigma^2_2|}{2\sigma^2_1}+\frac{|\mu_1-\mu_2|}{2\sigma_1}.
\end{align*}
\end{theorem}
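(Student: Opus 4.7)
The plan is to attack the two-sided bound separately, with the upper bound treated by reducing to two one-parameter problems via the triangle inequality, and the lower bound obtained by exhibiting concrete witnessing events.

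For the upper bound, I would first apply the triangle inequality for total variation to split
\[
TV(N(\mu_1,\sigma_1^2), N(\mu_2,\sigma_2^2)) \leq TV(N(\mu_1,\sigma_1^2), N(\mu_1,\sigma_2^2)) + TV(N(\mu_1,\sigma_2^2), N(\mu_2,\sigma_2^2)),
\]
reducing to a pure location problem and a pure scale problem. The location problem is immediate from Pinsker's inequality: since $\mathrm{KL}(N(\mu_1,\sigma^2)\|N(\mu_2,\sigma^2)) = (\mu_1-\mu_2)^2/(2\sigma^2)$, one obtains $TV \leq |\mu_1-\mu_2|/(2\sigma_1)$ after noting the symmetry argument needed to get $\sigma_1$ (not $\sigma_2$) in the denominator, which requires swapping the order of the two triangle-inequality pieces so the pure-location step is carried out at scale $\sigma_1$. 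The scale problem is the more delicate one: with $r = \sigma_2^2/\sigma_1^2$, one can write $TV(N(0,1), N(0,r))$ as a one-dimensional integral of $|p_1-p_2|$, use the fact that $\{p_1 > p_2\}$ is the set where $x^2$ is below an explicit threshold, and bound the resulting error function by a linear function of $|r-1|$. A Taylor expansion in $r$ around $1$, combined with a coarse bound when $|r-1|$ is large, yields the constant $3/2$.

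For the lower bound, it suffices to prove the two inequalities
\[
TV \geq \tfrac{1}{200}\min\{1, 3|\sigma_1^2-\sigma_2^2|/\sigma_1^2\}, \qquad TV \geq \tfrac{1}{200}\min\{1, 40|\mu_1-\mu_2|/\sigma_1\}
\]
separately, since the maximum of two nonnegative quantities is bounded above by their sum. For the mean inequality, I would use the test set $A = (-\infty, (\mu_1+\mu_2)/2]$ and compute $|P_1(A)-P_2(A)| = |1-2\Phi(-|\mu_1-\mu_2|/(2\sigma_1))|$ (after the reduction to a common scale, which absorbs a constant), then use the standard lower bound $\Phi(t) - 1/2 \geq ct$ for $t$ in a bounded range, together with a trivial bound of $1/200$ when the quantity saturates. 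For the variance inequality, I would choose the test set to be an interval of radius proportional to $\sigma_1$ around $\mu_1$, on which the two densities differ by a predictable amount controlled by the variance ratio, and again combine a small-perturbation lower bound with the trivial $1/200$ when the ratio is far from $1$.

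The main obstacle is chasing the explicit constants — especially the factor $40$ and $200$ — rather than the qualitative shape of the bound. Pinsker gives the right order for the upper bound with essentially no work, but matching the precise constant $3/2$ in the scale term requires either a careful Taylor expansion of the density-overlap integral with controlled remainder, or a direct case analysis that treats $r$ near $1$ and $r$ far from $1$ separately and glues the pieces. On the lower-bound side, the difficulty is that $\Phi$ and the associated density-overlap integrals do not admit clean closed forms, so keeping track of constants while passing between $|\mu_1-\mu_2|$, $|\sigma_1^2-\sigma_2^2|$, and the corresponding probability-mass differences is the bookkeeping one cannot avoid; this is precisely where \cite{devroye2018total} do the careful work, and I would ultimately invoke their argument rather than re-derive the sharp constants from scratch.
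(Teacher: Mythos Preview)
The paper does not prove this statement at all: it appears in the supplementary background section as a quoted result from \cite{devroye2018total}, with no accompanying argument. So there is no ``paper's own proof'' to compare your proposal against; the paper simply invokes the reference and uses the bounds as a black box (only the upper bound is actually applied, in the Gaussian-kernel examples).

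Your sketch is a reasonable outline of how one would go about establishing such two-sided bounds, and you are candid that the sharp constants would ultimately be imported from \cite{devroye2018total}. One minor slip: in the triangle-inequality split you want the pure-location step at scale $\sigma_1$, so the intermediate distribution should be $N(\mu_2,\sigma_1^2)$ rather than $N(\mu_1,\sigma_2^2)$; otherwise Pinsker lands you with $\sigma_2$ in the denominator. But since the paper treats the theorem as a citation, the appropriate ``proof'' here is simply to refer to \cite{devroye2018total}, which is effectively what you conclude as well.
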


\begin{theorem}\label{thm:devroye_multivariate}(Theorem 1.1, \cite{devroye2018total}) Let $\mu \in \mathbb{R}^p$, $\Sigma_1$ and $\Sigma_2$ be positive definite matrices, and $\lambda_1,\ldots,\lambda_p$ denote the eigenvalues of $\Sigma_1^{-1}\Sigma_2-I_d$
\[
\frac{1}{100}  \leq \frac{\text{TV} (\Phi(\cdot; \mu, \Sigma_1),\Phi(\cdot; \mu, \Sigma_2)}{\min\left\{1,\sqrt{\sum_{i=1}^p \lambda_i^2} \right\}} \leq \frac{3}{2}.
\]    
\end{theorem}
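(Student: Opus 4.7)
The strategy is to reduce the bound to a statement about pairs of Gaussians with the same mean and diagonal covariance, then use tensorization of a divergence that controls total variation sharply in both directions, combined with the univariate bounds from Theorem~\ref{thm:devroye_univariate}.

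\textbf{Step 1 (Normalization).} Total variation is invariant under affine transformations. Applying $y = \Sigma_1^{-1/2}(x-\mu)$ reduces the problem to bounding $\text{TV}(\Phi(\cdot;0,I_p),\Phi(\cdot;0,\Sigma_1^{-1/2}\Sigma_2\Sigma_1^{-1/2}))$. Since $\Sigma_1^{-1/2}\Sigma_2\Sigma_1^{-1/2}$ is symmetric positive definite, an orthogonal change of basis (which preserves $I_p$ and TV) diagonalizes it, yielding $D=\mathrm{diag}(d_1,\ldots,d_p)$ with $d_i = 1+\lambda_i$, where the $\lambda_i$ are precisely the eigenvalues of $\Sigma_1^{-1}\Sigma_2 - I_p$ (note $\Sigma_1^{-1}\Sigma_2$ and $\Sigma_1^{-1/2}\Sigma_2\Sigma_1^{-1/2}$ are similar).

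\textbf{Step 2 (Upper bound).} Both laws are products. I would combine Pinsker's inequality with a direct KL computation
\[
\mathrm{KL}(N(0,I_p)\,\|\,N(0,D)) = \tfrac{1}{2}\sum_{i=1}^{p}\bigl(d_i^{-1}-1+\log d_i\bigr),
\]
whose summand behaves as $\tfrac{1}{2}\lambda_i^2$ for small $|\lambda_i|$, giving TV $\lesssim \sqrt{\sum \lambda_i^2}$ in the perturbative regime. For large eigenvalues Pinsker is too lossy, but the trivial bound $\text{TV}\leq 1$ takes over, producing the $\min\{1,\cdot\}$ envelope. To pin down the constant $3/2$, I would analyze directly the log-likelihood ratio, whose distribution under both measures is an explicit linear combination of shifted $\chi^2_1$ random variables, and estimate the relevant tail by truncation.

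\textbf{Step 3 (Lower bound).} The data-processing inequality lets me project onto any one-dimensional statistic without increasing TV. A naive projection onto the coordinate achieving $\max_i|\lambda_i|$ plus Theorem~\ref{thm:devroye_univariate} loses a $\sqrt{p}$ factor since $\max_i |\lambda_i|$ can be as small as $p^{-1/2}\sqrt{\sum \lambda_i^2}$. To obtain dimension-free constants I would instead project onto a carefully weighted quadratic form $T(y)=\sum_i c_i y_i^2$ with $c_i\propto \lambda_i$, under which the laws are shifted linear combinations of $\chi^2_1$'s. A direct mean/variance computation shows that $T$ has means differing by $\Theta(\sum \lambda_i^2)$ and variance $O(\sum \lambda_i^2)$ under each measure, so the standardized $T$ produces two univariate distributions whose means are separated by $\Theta(\sqrt{\sum\lambda_i^2})$ standard deviations; applying Theorem~\ref{thm:devroye_univariate} to the laws of $T$ yields a lower bound proportional to $\min\{1,\sqrt{\sum \lambda_i^2}\}$, with a dimension-free constant that can be chased through the argument to reach $1/100$.

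\textbf{Main obstacle.} The hard part is the lower bound with a dimension-free constant. Per-coordinate reductions lose a $\sqrt{p}$ factor, so the essential move is to identify a single scalar projection (the weighted sum of squares above) that captures the full Frobenius norm $\|\Sigma_1^{-1}\Sigma_2-I_p\|_F$ of the separation rather than just one of its eigen-components, and then to verify that the univariate bound of Theorem~\ref{thm:devroye_univariate} applied to the laws of this projection is tight enough to recover the right rate up to absolute constants.
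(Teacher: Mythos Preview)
This theorem is not proved in the paper at all: it is quoted verbatim as Theorem~1.1 of \cite{devroye2018total} in the background section on total variation bounds, and the paper only \emph{applies} the upper bound in a handful of places (e.g.\ the Gaussian sample-mean example and the Gaussian-kernel examples). There is therefore no ``paper's own proof'' to compare your proposal against; any assessment of your argument would have to be against the original Devroye et al.\ paper, not this one.

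That said, your plan has a concrete gap in Step~3. You propose to project onto the scalar statistic $T(y)=\sum_i c_i y_i^2$ and then invoke Theorem~\ref{thm:devroye_univariate} on ``the laws of $T$''. But Theorem~\ref{thm:devroye_univariate} is a bound for the total variation between two \emph{Gaussian} distributions $\text{N}(\mu_1,\sigma_1^2)$ and $\text{N}(\mu_2,\sigma_2^2)$; under either $\text{N}(0,I_p)$ or $\text{N}(0,D)$ the law of $T$ is a linear combination of independent $\chi^2_1$ variables, not a Gaussian. So that theorem simply does not apply to the projected laws, and the lower bound argument as written does not go through. You would need a separate lower bound on the TV between the two (non-Gaussian) distributions of $T$, established directly from their explicit densities or characteristic functions; the mean--variance heuristic you sketch is suggestive but is not by itself a proof, since two distributions can have well-separated means relative to their standard deviations and still have small TV if their tails are heavy or their shapes differ. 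The upper bound sketch is closer to workable, but ``pin down the constant $3/2$ by truncation'' is doing a lot of unstated work; Pinsker alone gives the rate but not that constant, and the passage from the perturbative regime to the trivial bound needs a uniform argument, not just a case split.
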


\noindent \textit{TV bounds between Laplace.} We include a proof for the following result as we did not find a reference for it. In the main manuscript, we denoted with $\text{Laplace}(x;\mu,c)$, a Laplace kernel with location $\mu$ and scale $c$. In this section, we use $\text{Laplace}(\mu,c)$ (or $\text{Laplace}(\cdot;\mu,c)$ )to refer to the corresponding probability measure. 

\begin{proposition}\label{prop:tv_laplace} \textit{(a)} Let $X\sim\text{Laplace}(\mu,c)$ and $Y\sim \text{Laplace}(\mu,c+a_n)$ for $c\geq a_n>0$ and $a_n \to 0$. For some $C>0$, the following holds
\[
\text{TV} (\text{Laplace}(\mu,c),\text{Laplace}(\mu,c+a_n)) = \left(\frac{c}{c+a_n}\right)^{\frac{c}{a_n}}-\left(\frac{c}{c+a_n}\right)^{\frac{c+a_n}{a_n}} \leq C\frac{a_n}{c}
\]  
\textit{(b)} Let $X=(X_1,\ldots, X_p) \sim P_X$ be such that $X_i\iidsim \text{Laplace}(\mu,c)$ for $i=1,\ldots,p$ and $Y=(Y_1,\ldots, Y_p)\sim P_Y$ be such that $Y_i\iidsim \text{Laplace}(\mu,c+a_n)$ for $i=1,\ldots,p$, then 
\[
\text{TV} (P_X,P_Y) \leq C p \frac{a_n}{c}
\]
\end{proposition}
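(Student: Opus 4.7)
The plan is to prove part (a) by a direct, explicit computation of the total variation between two Laplace distributions with equal location and nearly equal scale, and then upgrade the exact formula to the stated big-$O$ bound. Part (b) will follow immediately from the standard subadditivity of TV under tensor products, applied to the univariate bound.

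For part (a), by translation invariance of the Laplace family I may set $\mu=0$ without loss of generality. Writing $f_c(x)=\tfrac{1}{2c}e^{-|x|/c}$, the TV equals $\int(f_c-f_{c+a_n})_+\,\di x$, so the first step is to identify the region where $f_c>f_{c+a_n}$. Taking logs reduces the pointwise inequality to a linear inequality in $|x|$, giving the threshold
\[
x^{\ast}=\frac{c(c+a_n)}{a_n}\log\!\left(\frac{c+a_n}{c}\right),
\]
with $f_c>f_{c+a_n}$ iff $|x|<x^{\ast}$. Since $\Pr(|X|<t)=1-e^{-t/c}$ for $X\sim\text{Laplace}(0,c)$, the total variation becomes
\[
\text{TV}=\bigl(1-e^{-x^{\ast}/c}\bigr)-\bigl(1-e^{-x^{\ast}/(c+a_n)}\bigr)=e^{-x^{\ast}/(c+a_n)}-e^{-x^{\ast}/c}.
\]
Substituting the explicit $x^{\ast}$ and simplifying the exponents yields the claimed equality
\[
\text{TV}=\left(\frac{c}{c+a_n}\right)^{c/a_n}-\left(\frac{c}{c+a_n}\right)^{(c+a_n)/a_n}.
\]

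To obtain the $O(a_n/c)$ upper bound, I factor out the common ratio to write
\[
\text{TV}=\left(\frac{c}{c+a_n}\right)^{c/a_n}\!\!\left(1-\frac{c}{c+a_n}\right)=\left(\frac{c}{c+a_n}\right)^{c/a_n}\frac{a_n}{c+a_n}.
\]
Using $\log(1-u)\le-u$ with $u=a_n/(c+a_n)$ gives $(c/a_n)\log\bigl(c/(c+a_n)\bigr)\le -c/(c+a_n)$, so $\bigl(c/(c+a_n)\bigr)^{c/a_n}\le e^{-c/(c+a_n)}\le 1$. Combined with $a_n/(c+a_n)\le a_n/c$, this yields $\text{TV}\le a_n/c$, so any $C\ge 1$ works (and in fact the assumption $c\ge a_n$ allows a sharper constant via $c/(c+a_n)\ge 1/2$).

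For part (b), I invoke the standard tensorization inequality for total variation of product measures: whenever $P_X=\prod_i P_{X_i}$ and $P_Y=\prod_i P_{Y_i}$,
\[
\text{TV}(P_X,P_Y)\le\sum_{i=1}^{p}\text{TV}(P_{X_i},P_{Y_i}).
\]
Applying part (a) to each of the $p$ identical marginals immediately gives $\text{TV}(P_X,P_Y)\le p\cdot C a_n/c$, as required. I anticipate no substantive obstacle; the only care needed is the algebraic manipulation in part (a) to make sure the explicit formula matches the stated one, and to choose the simplest valid constant $C$.
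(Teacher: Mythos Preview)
Your proof is correct and follows essentially the same route as the paper: the same crossing point $x^{\ast}$, the same evaluation of the TV via the Laplace tails, the identical factorization $\bigl(c/(c+a_n)\bigr)^{c/a_n}\cdot a_n/(c+a_n)$ for the bound, and the same subadditivity of TV for products in part~(b). The only cosmetic differences are that the paper proves the tensorization inequality inline via a telescoping identity for $p=2$ rather than citing it, and merely asserts that the factor $\bigl(c/(c+a_n)\bigr)^{c/a_n}$ is bounded where you justify it via $\log(1-u)\le -u$.
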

\begin{proof} For part \textit{(a)} we have that
\begin{align*}
   \text{TV} &(\text{Laplace}(\mu,c),\text{Laplace}(\mu,c+a_n))  =\\
  & =\int\frac{1}{2}\left\vert \frac{1}{2 (c+a_n)} \exp\left( -\frac{|x|}{c+a_n}  \right) - \frac{1}{2 c} \exp\left( -\frac{|x|}{c}  \right) \right\vert \di x \\
   &=\int_0^{\infty}\frac{1}{2}\left\vert \frac{1}{ c+a_n} \exp\left( -\frac{x}{c+a_n}  \right) - \frac{1}{ c} \exp\left( -\frac{x}{c}  \right) \right\vert \di x\\
   &=\int_{x^*}^{\infty} \frac{1}{c+a_n} \exp\left( -\frac{x}{c+a_n}  \right) - \frac{1}{ c} \exp\left( -\frac{x}{c}  \right) \di x \\
   &=\exp\left( -\frac{x^*}{c+a_n} \right) - \exp\left( -\frac{x^*}{c}  \right),
\end{align*}
where $x^*=\log\frac{c+a_n}{c}(c+a_n)c/a_n$ is point where the two kernel intersect. This proves the equality
\[
\text{TV} (\text{Laplace}(\mu,c),\text{Laplace}(\mu,c+a_n)) = \left(\frac{c}{c+a_n}\right)^{\frac{c}{a_n}}-\left(\frac{c}{c+a_n}\right)^{\frac{c+a_n}{a_n}}.
\]
To see the inequality, rewrite the TV as 
\[
\left(\frac{c}{c+a_n}\right)^{\frac{c}{a_n}}-\left(\frac{c}{c+a_n}\right)^{\frac{c+a_n}{a_n}}=\left(\frac{c}{c+a_n}\right)^{\frac{c}{a_n}}\left[1 - \frac{c}{c+a_n}\right] \leq \left(\frac{c}{c+a_n}\right)^{\frac{c}{a_n}} \frac{a_n}{c},
\]
and note that the first term is positive, bounded, and strictly decreasing. 

For part \textit{(b)}, consider for simplicity the case $p=2$. The proof for general $p$ will follow from an identical argument.
\begin{align*}
   \text{TV} (P_X, & P_Y)  =\int\int\frac{1}{2}\left\vert p_{x_1}(x_1) p_{x_2}(x_2) - p_{y_1}(x_1) p_{y_2}(x_2) \right\vert \di x_1 \di x_2 \\
   & =\int\int\frac{1}{2}\left\vert (p_{x_1}(x_1) - p_{y_1}(x_1)) p_{x_2}(x_2) + (p_{x_2}(x_2) -p_{y_2}(x_2)) p_{y_1}(x_1)\right\vert \di x_1 \di x_2 \\
   &\leq \int\int\frac{1}{2}\left\vert (p_{x_1}(x_1) - p_{y_1}(x_1))  \right\vert \di x_1 p_{x_2}(x_2) \di x_2 + \\
   & \hspace{1.5cm} +\int\frac{1}{2}\left\vert (p_{x_2}(x_2) -p_{y_2}(x_2)) \right\vert \int p_{y_1}(x_1) \di x_1 \di x_2 \\
    &\leq 2 C\frac{a_n}{c},
\end{align*}
where the first inequality is the triangle inequality, the second one is part \textit{(a)}. 
\end{proof}

\subsection{Auxiliary results on parametric Bootstrap}\label{sec:natural gradient}

We recall Proposition 1 of \cite{fong2024asymptotics}. This proposition provides a condition that implies Assumption~\ref{ass:param_acid} c), from Section~\ref{sec:param_boot} in the main manuscript.

\begin{proposition}\label{prop:fongyiu}(\cite{fong2024asymptotics}) For $X\sim P_{\theta}$, suppose there exists non-negative constants $B,C<\infty$ such that the following holds for all $\theta\in\Theta \subseteq\mathbb{R}$,
\[
\mathbb{E}(Z(X, \theta)^4) \leq B+C\theta^4,
\]
then Assumption~\ref{ass:param_acid} c) is satisfied. 
\end{proposition}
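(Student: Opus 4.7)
\textbf{Proof proposal for Proposition~\ref{prop:fongyiu}.}

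The plan is to deduce uniform integrability of $(Z(X_n,\hat\theta_{n-1}))_{n\geq 1}$ from a uniform $L^2$ bound, which by Cauchy--Schwarz and the hypothesis reduces to controlling a fourth moment of $(\hat\theta_n)_{n\geq 0}$. More precisely, since $X_n \mid \mathcal{G}_{n-1} \sim P_{\hat\theta_{n-1}}$, the hypothesis and Cauchy--Schwarz give
\[
\mathbb{E}\bigl[Z(X_n,\hat\theta_{n-1})^2 \mid \mathcal{G}_{n-1}\bigr] \leq \sqrt{B+C\hat\theta_{n-1}^4} \leq \sqrt{B}+\sqrt{C}\,\hat\theta_{n-1}^2,
\]
so that $\sup_n \mathbb{E}[Z(X_n,\hat\theta_{n-1})^2]<\infty$ would follow from $\sup_n \mathbb{E}[\hat\theta_n^4]<\infty$. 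The first bound upgrades $L^1$-boundedness to uniform integrability by de la Vall\'ee-Poussin, so the whole proof reduces to establishing $\sup_n \mathbb{E}[\hat\theta_n^4]<\infty$ under the standing assumption $\sum_n \eta_n^2 <\infty$.

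To control $\mathbb{E}[\hat\theta_n^4]$, I would exploit the fact that with the natural gradient $Z=\mathcal{I}^{-1}s$, the score identity gives $\mathbb{E}[Z(X_n,\hat\theta_{n-1})\mid \mathcal{G}_{n-1}]=0$, so $(\hat\theta_n)$ is a $\mathcal{G}$-martingale. Expanding $\hat\theta_n^4 = (\hat\theta_{n-1}+\eta_n Z_n)^4$ by the binomial theorem, the linear cross term vanishes in conditional expectation, leaving
\[
\mathbb{E}[\hat\theta_n^4\mid\mathcal{G}_{n-1}] = \hat\theta_{n-1}^4 + 6\eta_n^2 \hat\theta_{n-1}^2 \mathbb{E}[Z_n^2\mid\mathcal{G}_{n-1}] + 4\eta_n^3 \hat\theta_{n-1}\mathbb{E}[Z_n^3\mid\mathcal{G}_{n-1}] + \eta_n^4\mathbb{E}[Z_n^4\mid\mathcal{G}_{n-1}].
\]
Applying $\mathbb{E}[|Z_n|^p\mid\mathcal{G}_{n-1}] \leq (B+C\hat\theta_{n-1}^4)^{p/4}$ for $p\in\{2,3,4\}$, then Young's inequality to trade lower powers of $\hat\theta_{n-1}$ for constants plus $\hat\theta_{n-1}^4$ (using $|\hat\theta|^{k}\leq 1+\hat\theta^4$ for $k\leq 4$), each of the three remaining terms is majorized by a term of the form $K\eta_n^2(1+\hat\theta_{n-1}^4)$ (noting $\eta_n^3,\eta_n^4\leq \eta_n^2 \sup_m\eta_m$ since $\eta_n\to 0$).

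This yields the Gronwall-type recursion $\mathbb{E}[\hat\theta_n^4] \leq (1+K\eta_n^2)\mathbb{E}[\hat\theta_{n-1}^4] + K\eta_n^2$, which iterates to
\[
\mathbb{E}[\hat\theta_n^4] \leq \Bigl(\mathbb{E}[\hat\theta_0^4]+K\sum_{k=1}^n \eta_k^2\Bigr) \exp\Bigl(K\sum_{k=1}^n \eta_k^2\Bigr).
\]
Since $\sum_n \eta_n^2<\infty$, the right-hand side is bounded uniformly in $n$, completing the argument. The main obstacle is handling the cubic cross term $4\eta_n^3 \hat\theta_{n-1}\mathbb{E}[Z_n^3\mid\mathcal{G}_{n-1}]$: a naive H\"older bound produces a $|\hat\theta_{n-1}|^4$ contribution that must not dominate the recursion, and it is only because $\eta_n^3 = o(\eta_n^2)$ that Young's inequality balances cleanly. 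A secondary technicality is verifying the interchange of expectation and derivative in the score identity, which is routine under Assumption~\ref{ass:param_acid}~a)--b) but should be noted.
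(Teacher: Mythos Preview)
The paper does not supply its own proof of this proposition: it is quoted verbatim as Proposition~1 of \cite{fong2024asymptotics} and simply cited as an auxiliary result, so there is no in-paper argument to compare against. That said, your proposal is correct and is the natural line of proof---exploit the martingale structure of $(\hat\theta_n)$ to get a Gronwall-type moment recursion, close it using $\sum_n\eta_n^2<\infty$, and feed the resulting uniform moment bound into de~la~Vall\'ee--Poussin.

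One remark on efficiency: you work harder than necessary by bounding fourth moments of $\hat\theta_n$. Since $\mathbb{E}[Z_n\mid\mathcal{G}_{n-1}]=0$, expanding the square gives directly
\[
\mathbb{E}[\hat\theta_n^2\mid\mathcal{G}_{n-1}]
=\hat\theta_{n-1}^2+\eta_n^2\,\mathbb{E}[Z_n^2\mid\mathcal{G}_{n-1}]
\leq (1+\sqrt{C}\,\eta_n^2)\hat\theta_{n-1}^2+\sqrt{B}\,\eta_n^2,
\]
which already yields $\sup_n\mathbb{E}[\hat\theta_n^2]<\infty$ by the same Gronwall iteration, and hence $\sup_n\mathbb{E}[Z_n^2]<\infty$ and uniform integrability. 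This sidesteps the cubic cross term entirely and avoids the Young-inequality bookkeeping you flag as the main obstacle. Your fourth-moment route is not wrong---it even gives the stronger conclusion $\sup_n\mathbb{E}[\hat\theta_n^4]<\infty$---but the extra strength is not needed for Assumption~\ref{ass:param_acid}~c).
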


\section{Proofs} \label{sec:Proofs}

\subsection{Proof of Section \ref{sec:acid}}
\begin{proof}[Proof of Theorem~\ref{thm:asym.exch}]
The state space $E$ is assumed to be a Polish space, i.e. a separable topological space, metrizable by a complete metric, and $\mathcal{E}$ the corresponding Borel $\sigma$-algebra. Therefore, there exists a countable subclass $\mathcal{A} \subseteq \mathcal{E}$ that generates $\mathcal{E}$, that is, $\sigma(\mathcal{A})=\mathcal{E}$, and is probability determining, i.e. if two probability measures agree on $\mathcal{A}$, then they must also agree on $\mathcal{E}$. Indeed, let us consider the class of finite intersections of open balls with rational radius ($E$ is completely metrizable) centered at the points of a countable dense subset ($E$ is separable). We denote this class by $\mathcal{A}$, which is countable. Because $E$ is separable, every open sets is a countable union of open balls, hence also of elements of $\mathcal{A}$. Therefore, being $\mathcal{E}$ the smallest $\sigma$-algebra generated by the open sets, it is also generated by $\mathcal{A}$. Finally, the class $\mathcal{A}$ is closed under intersections by construction, hence it is a probability determining class from Dynkyn Lemma.  \vspace{0.2cm} \\
For any $A \in \mathcal{A}$, $(\alpha_n(\cdot,A))_{n\geq 0} = ( \mathbb{P}(X_n \in A| \mathcal{G}_{n-1})) )_{n\geq 0}$ forms an almost $\mathcal{G}$-supermartingale. Therefore, it converges almost surely by Theorem~\ref{th:robsig} and the assumption that $\sum_{n=0}^\infty  \xi_n  < \infty$ a.s.. Being the class $\mathcal{A}$ countable, the exceptional null sets can be grouped into a unique null set, denoted $N_{1}$. 
Therefore, the sequence of probability measures $(\alpha_n(\omega,\cdot ))_{n\geq 0}$ converges over all elements $A\in \mathcal{A}$ of a determining class, for all $\omega \in N_{1}^{c}$. 
If it can also be shown that that the sequence $(\alpha_n(\cdot,\cdot))_{n\geq 0}$ is a.s. tight,
i.e. there exist a null set $N_{2}\in \mathcal{E}$ such that for all $\omega \in N_{2}^{c}$ the sequence $(\alpha_n(\omega,\cdot))_{n\geq 0}$ is tight. Then, for all $\omega \in (N_{1}\cup N_{2})^{c}$, the sequence of probability measures $(\alpha_n(\omega,\cdot))_{n\geq 0}$ converges for all $A$ in a probability determining class and is tight, hence converges weakly to a probability measure $\alpha(\omega,\cdot)$, 
from Theorem 5.1 and subsequent Corollary of \cite{bil99}, hence the result of Theorem~\ref{thm:asym.exch} follows. \vspace{0.2cm} \\
Therefore, it remains to show that $(\alpha_n(\cdot,\cdot))_{n\geq 0}$ is a.s. tight.    
Fix $\epsilon>0$ arbitrary, and, for every $j\in \mathbb{N}$, set $\delta_{j} := \epsilon 2^{-2j}$. Moreover, for every $m\in \mathbb{N}$, let $\bar{\alpha}_{m}$ denote the mean measure of $\alpha_{m}$, defined as 
$$\bar{\alpha}_{m}(A):= \int_{\Omega} \alpha_{m}(\omega,A) \mathbb{P}(d\omega).$$ Because $E$ is a Polish space, $\bar{\alpha}_{m}$ is a tight probability measure for every $m\in \mathbb{N}$, from Theorem 1.3 of \cite{bil99}. Therefore, for every $m\in \mathbb{N}$, there exists a sequence of compact subsets $(K_{m,j})_{j\in \mathbb{N}}$ of $E$, s.t. $\bar{\alpha}_{m}(K_{m,j}^{c})\leq \delta_{j}=\epsilon 2^{-2j}$, for all $m$ and $j\in \mathbb{N}$.  For each $m\in \mathbb{N}$, by denoting with $\mathcal{P}(\mathcal{E})$ the set of probability measures on $(E,\mathcal{E})$, we can define the sequence of sets
\begin{equation*}
    \Theta_{m}:=\{ \theta \in \mathcal{P}(\mathcal{E}) : \theta(K^{c}_{m,j} )\leq 2^{-j}, \forall j \geq 1\}.
\end{equation*}
For each $m$, $\Theta_{m}$ is a subset of probability measures on $E$, whose elements are tight measures (using the same approximating compact sets as $\bar{\alpha}_{m}$ for every level of approximation $j$). Also, let us notice that if, for a specific $\omega \in \Omega$,  there exists $m \in \mathbb{N}$ (which may depend on $\omega$) s.t. $\forall k\geq m, \ \alpha_{k}(\omega, \cdot) \in \Theta_{m}$, then the sequence $(\alpha_{n}(\omega, \cdot))_{n\in \mathbb{N}}$ is tight. Indeed, choose $\epsilon'>0$ arbitrary, and pick $j$ large enough s.t. $2^{-j}<\epsilon'$. Then, for all $k\geq m$, $\alpha_{k}(\omega, \cdot) \in \Theta_{m}$, so   $\alpha_{k}(\omega, K_{m,j}) \geq  1- 2^{-j} > 1-\epsilon'$. For each $k<m$, $\alpha_{k}(\omega,\cdot)$ is a tight measure ($E$ Polish), so there is a compact $K_{k}$ (dependent on $\omega$) s.t. $\alpha_{k}(\omega,K_{k})>1-\epsilon'$. Finally, $K= K_{m,j} \cup (\cup_{k=1}^{n} K_{k})$ is a compact set (being finite union of compact sets), s.t. $\alpha_{n}(\omega,K)>1-\epsilon'$ for all $n\in \mathbb{N}$. Therefore, in order to show that $(\alpha(\cdot,\cdot))_{n\geq 0}$ is a.s. tight, it is enough to show that 
\begin{equation}\label{eq:proof.tight0}
    \mathbb{P}\left(\{\omega \in \Omega : \exists m \in \mathbb{N} \ \text{s.t.} \ \forall k\geq m, \ \alpha_{k}(\omega, \cdot) \in \Theta_{m} \} \right) = 1
\end{equation}

\noindent The left hand side of equation~\eqref{eq:proof.tight0} can be lower bounded as follows,
\begin{align} \label{eq:proof.tight1}
   \mathbb{P} &\left(\{\omega \in \Omega :   \exists m \in \mathbb{N} \ \text{s.t.} \ \forall k\geq m, \ \alpha_{k}(\omega, \cdot) \in \Theta_{m} \} \right)  \nonumber \\
   & = \mathbb{P}\left(\cup_{m\in \mathbb{N}} \cap_{k \geq m} \{\omega \in \Omega :  \ \alpha_{k}(\omega, \cdot) \in \Theta_{m} \} \right)  \nonumber
    \\
   & = \mathbb{P}\left(\cup_{m\in \mathbb{N}} \cap_{k \geq m} \{\omega \in \Omega :  \ \alpha_{k}(\omega, K_{m,j}^{c}) \leq 2^{-j}, \forall j \in \mathbb{N} \} \right) \nonumber \\
   & = \mathbb{P}\left( \cap_{j\in \mathbb{N}} \cup_{m\in \mathbb{N}} \cap_{k \geq m} \{\omega \in \Omega :  \ \alpha_{k}(\omega, K_{m,j}^{c}) \leq 2^{-j} \} \right) \nonumber \\
   & = 1- \mathbb{P}\left( \cup_{j\in \mathbb{N}} \cap_{m\in \mathbb{N}} \cup_{k \geq m} \{\omega \in \Omega :  \ \alpha_{k}(\omega, K_{m,j}^{c}) \geq 2^{-j} \} \right) \nonumber \\
   & \geq 1- \sum_{j\in \mathbb{N}}\mathbb{P}\left(  \cap_{m\in \mathbb{N}} \cup_{k \geq m} \{\omega \in \Omega :  \ \alpha_{k}(\omega, K_{m,j}^{c}) \geq 2^{-j} \} \right) 
\end{align}
where the second equality follows from the definition of $\Theta_{m}$. The $j$-th term in the last sum can be upper bounded as follows,
\begin{align*}
    & \mathbb{P}\left(  \cap_{m\in \mathbb{N}} \cup_{k \geq m} \{\omega \in \Omega :  \ \alpha_{k}(\omega, K_{m,j}^{c}) \geq 2^{-j} \} \right)  \\
    & = \lim_{m \to \infty }\mathbb{P}\left(   \cup_{k \geq m} \{\omega \in \Omega :  \ \alpha_{k}(\omega, K_{m,j}^{c}) \geq 2^{-j} \} \right)  \\
    & = \lim_{m \to \infty }\mathbb{P}\left(    \lim_{n_{m} \to \infty} \cup_{k = m}^{n_{m}} \{\omega \in \Omega :  \ \alpha_{k}(\omega, K_{m,j}^{c}) \geq 2^{-j} \} \right)  \\
       & = \lim_{m \to \infty }  \lim_{n_{m} \to \infty}  \mathbb{P}\left(   \cup_{k = m}^{n_{m}} \{\omega \in \Omega :  \ \alpha_{k}(\omega, K_{m,j}^{c}) \geq 2^{-j} \} \right)  \\
        & = \lim_{m \to \infty }  \lim_{n_{m} \to \infty}  \mathbb{P}\left(    \{\omega \in \Omega :  \  \max_{m\leq k\leq n_{m}}\alpha_{k}(\omega, K_{m,j}^{c}) \geq 2^{-j} \} \right)  \\
         & \leq  \lim_{m \to \infty }  \lim_{n_{m} \to \infty}     2^{j} \left( \mathbb{E}(\alpha_{m}(\cdot, K^{c}_{m,j}) ) + \mathbb{E}( \sum_{k=m}^{n_{m}} \xi_{k} )\right)   \\
            & =  \lim_{ m\to \infty }      2^{j}  \bar{\alpha}_{m}( K^{c}_{m,j})  +  2^{j}  \lim_{ m\to \infty } \sum_{k=m}^{\infty}  \mathbb{E} (\xi_{k} )  \\
              & =  \lim_{ m\to \infty }      2^{j}  \bar{\alpha}_{m}( K^{c}_{m,j})     \\
               & \leq   \lim_{ m\to \infty }      2^{j}  \epsilon 2^{-2j}  = \epsilon 2^{-j}
\end{align*}
where the first inequality  follows from the maximal inequality for almost supermartingales, Proposition \ref{eq:max.ineq.robb}, and unconditioning over $\mathcal{G}_{m}$, i.e. integrating both sides of the inequality over $E$. The second to last equality follows from the assumption that $\sum_{k=1}^{\infty} \xi_{k} <\infty$ a.s., which in turns implies that $\sum_{k=1}^{\infty} \mathbb{E}( \xi_{k} )<\infty$, due to monotone convergence and $\xi_{n}$ being non-negative random variables. Finally, the last inequality follows by the specific choice of $K_{m,j}$. \vspace{0.2cm}  \\
Using this upper bound in \eqref{eq:proof.tight1},
\begin{align*}
    \mathbb{P}\left(\{\omega \in \Omega : \exists m \in \mathbb{N} \ \text{s.t.} \ \forall k\geq m, \ \alpha_{k}(\omega, \cdot) \in \Theta_{m} \} \right)    \geq 1- \sum_{j\in \mathbb{N}} \epsilon 2^{-j} = 1 - \epsilon,
\end{align*}
and, because $\epsilon$ is arbitrary, this implies \eqref{eq:proof.tight0}, hence it concludes the proof.
\end{proof} \vspace{0.4cm}

\begin{proof}[Proof of Corollary~\ref{corollary1}]
From Theorem~\ref{thm:asym.exch}, for all $\omega$ in a set of probability one, we have that for all $f:E\to\mathbb{R}$ continuous and bounded,
\begin{equation}\label{proof.corollary}
   \lim_{n\to\infty} \mathbb{E}(f(X_{n+1})|\mathcal{G}_{n})(\omega) \to \int_{\mathbb{R}}f(x) \alpha(\omega,dx)
\end{equation}
where   $\mathbb{E}(f(X_{n+1})|\mathcal{G}_{n})(\omega) = \int_{\mathbb{R}}f(x_{n+1})\mathbb{P}(X_{n+1}\in dx_{n+1} |\mathcal{G}_{n})(\omega)$. On the same set of probability one, we have that for all $f:E\to\mathbb{R}$ continuous and bounded,
\begin{align*}
    \lim_{M\to \infty} \mathbb{E}\left(f(X_{n+M})  |\mathcal{G}_{n}\right) & = \lim_{M\to \infty} \mathbb{E}\left[ \mathbb{E}\left[ f(X_{n+M}) | \mathcal{G}_{n+M-1}\right] \bigg| \mathcal{G}_{n}\right]  \\
    & = \mathbb{E}\left[ \lim_{M\to \infty}\mathbb{E}\left[ f(X_{n+M}) | \mathcal{G}_{n+M-1}\right] \bigg| \mathcal{G}_{n}\right] \\
    &= \mathbb{E}\left[ \int_{\mathbb{R}}f(x) \alpha(\cdot,dx) \bigg| \mathcal{G}_{n}\right] 
\end{align*}  
where the first equality follows from the fact that $\mathcal{G}_{n}\subset\mathcal{G}_{n+M-1}$, the second one from the conditional bounded convergence theorem, and the last one from \eqref{proof.corollary}. Hence, 
 $\mathbb{P}\left(X_{n+M}\in \cdot  |\mathcal{G}_{n}\right) \stackrel{w}{\to} \mathbb{E}[ \alpha(\cdot)|\mathcal{G}_{n}]$ almost surely.
\end{proof}\vspace{0.4cm}


\subsection{Proofs of Section~\ref{sec:param_boot}}\label{sec:proof_param_boot}

Proposition~\ref{prop:param_boot} proof is done in two steps. First we show that $(\hat{\theta})_{n\geq 1}$ converge $a.s.$; then we prove that Definition~\ref{def:acid.pred} holds.  The following Lemma is trivial and surely known but we did not find a reference for it. 

\begin{lemma}\label{eq:summability} Let $(Z_n)_{n\geq 1}$ be a sequence of a.s. finite r.v.s. such that $Z_n \stackrel{\text{a.s.}}{\to} Z$, for some a.s. finite r.v. $Z$. Let $(\eta_n)_{n \geq 1}$ be a deterministic positive summable sequence $\sum_{n=1}^\infty \eta_n <\infty$. Then, it holds that $\sum_{n=1}^\infty \eta_n Z_n < \infty$ a.s.. 
\end{lemma}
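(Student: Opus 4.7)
The plan is essentially to reduce the claim to the trivial fact that a convergent sequence is bounded, combined with summability of $(\eta_n)_{n \geq 1}$. First I would fix the almost sure event $\Omega^{\star} = \{\omega \in \Omega : Z_n(\omega) \to Z(\omega)\} \cap \{\omega \in \Omega : |Z(\omega)| < \infty\}$, which has probability one by the two hypotheses (a.s. convergence and a.s. finiteness of the limit). All subsequent reasoning is pointwise on $\Omega^{\star}$.

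For any fixed $\omega \in \Omega^{\star}$, the numerical sequence $(Z_n(\omega))_{n \geq 1}$ converges in $\mathbb{R}$, hence is bounded: set $M(\omega) := \sup_{n \geq 1} |Z_n(\omega)| < \infty$. Then
\begin{equation*}
\sum_{n=1}^\infty \eta_n |Z_n(\omega)| \;\leq\; M(\omega) \sum_{n=1}^\infty \eta_n \;<\; \infty,
\end{equation*}
using the assumption $\sum_{n=1}^\infty \eta_n < \infty$. Hence the series $\sum_{n=1}^\infty \eta_n Z_n(\omega)$ is absolutely convergent for every $\omega \in \Omega^{\star}$, which gives the claim.

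There is no real obstacle here; the only mild subtlety is being careful to work on the intersection of the two a.s. events (convergence and finiteness of the limit) so that boundedness of $(Z_n(\omega))_{n \geq 1}$ is legitimately available pointwise. Note that no integrability or independence structure of $(Z_n)_{n \geq 1}$ is needed, and the determinism (and positivity) of $(\eta_n)_{n \geq 1}$ is only used via its summability.
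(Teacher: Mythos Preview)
Your proof is correct and follows essentially the same approach as the paper: work pointwise on a full-measure set, use that a convergent real sequence is bounded, and conclude via summability of $(\eta_n)_{n\geq 1}$. The paper's version first reduces WLOG to non-negative $Z_n$ and bounds only the tail $\sum_{n\geq N_\omega}\eta_n Z_n(\omega)$ by $(Z(\omega)+\epsilon)\sum_{n\geq N_\omega}\eta_n$, whereas you go directly through the global bound $M(\omega)=\sup_n|Z_n(\omega)|$ and obtain absolute convergence in one line; this is a cosmetic difference, not a substantive one.
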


\begin{proof} Without loss of generality, we can assume that all $Z_{n}$ are a.s. non-negative. Choose $\omega'\in \{\omega \in \Omega: Z_n(\omega) \to Z(\omega), Z(\omega)<\infty\}$. Then, for every $\epsilon>0$, there exists $N_{\omega'}$ such that for all $n>N_{\omega'}$, $Z_n(\omega')< Z(\omega')+\epsilon$. Therefore,
\[
\sum^{\infty}_{n=N_{\omega'}} \eta_n Z_n(\omega) < (Z(\omega')+\epsilon)\sum^{\infty}_{n=N_{\omega'}} \eta_n <\infty. 
\]
because $\epsilon$ is arbitrary, $(\eta_n)_{n \geq 1}$ summable, and $Z(\omega')<\infty$, the statement follows.
\end{proof}

\begin{proof}[Proof of Proposition~\ref{prop:param_boot}] First note that $(\hat{\theta})_{n\geq 1}$ is a martingale
\[
\mathbb{E}(\hat{\theta}_n| \mathcal{G}_{n-1}) = \hat{\theta}_{n-1} + \eta_n \mathcal{I}(\hat{\theta}_{n-1})^{-1} \mathbb{E}( s(x_n, \hat{\theta}_{n-1})| \mathcal{G}_{n-1}) = \hat{\theta}_{n-1},
\]  
where the second equality follows from standard property of the score function. In addition, 
\begin{align*}
\lim_{n \to \infty} \mathbb{E}(\hat{\theta}^2_n) & = \lim_{n \to \infty} \sum_{i=1}^n \mathbb{E}(\mathbb{E}((\hat{\theta}_i-\hat{\theta}_{i-1})^2| \mathcal{G}_{i-1}))= \sum_{i=1}^\infty \eta^2_i \mathbb{E}(\mathbb{E}(Z(x_i, \hat{\theta}_{i-1})^2| \mathcal{G}_{i-1})) \\
& = \sum_{i=1}^\infty \eta^2_i \mathbb{E}( Z(x_i, \hat{\theta}_{i-1})^2) <\infty,
\end{align*}
where the first equality follows by martingale properties, the last inequality by Assumption~\ref{ass:param_acid} c). Hence, $(\hat{\theta})_{n\geq 1}$ converges a.s. by Doob's martingale convergence theorem. 

\noindent Now, we show that $(X_n)_{n \geq 1}$ is a.c.i.d.. Recall that $\alpha_{n}(A) = \int_{A} p(x;\hat{\theta}_{n})\di x$. 
\begin{align}\label{eq:param1}
  \mathbb{E}(\alpha_{n+1}(A) -\alpha_{n}(A)  | \mathcal{G}_{n}) =& \mathbb{E}\left( 
  \int_{A} p(x;\hat{\theta}_{n+1})  - p(x;\hat{\theta}_{n})\di x| \mathcal{G}_{n}\right)\nonumber \\ 
  =  &  \mathbb{E}\left( 
  \int_{A}p(x;\hat{\theta}_{n}) \left( \frac{p(x;\hat{\theta}_{n+1}) }{p(x;\hat{\theta}_{n})}-1\right)\di x| \mathcal{G}_{n}\right).
\end{align}

\noindent Let us look at the integrand. A Taylor expansion of $p(x;\theta)$ around $\hat{\theta}_{n}$ is 
\[
p(x;\theta)= p(x;\hat{\theta}_{n}) + p'(x;\hat{\theta}_{n}) (\theta - \hat{\theta}_{n}) + \frac{p''(x;\nu_n)}{2}(\theta - \hat{\theta}_{n})^2,
\]
where $\nu_n$ is a point between $\theta$ and $\hat{\theta}_{n}$. We can rewrite the integrand as 
\begin{align*}
\frac{p(x;\hat{\theta}_{n+1}) }{p(x;\hat{\theta}_{n})}-1 &=(\hat{\theta}_{n+1}-\hat{\theta}_{n})  \frac{p'(x;\hat{\theta}_{n})}{p(x;\hat{\theta}_{n})}+ \frac{(\hat{\theta}_{n+1}-\hat{\theta}_{n})^2}{2}\frac{p''(x;\nu_{n})}{p(x;\hat{\theta}_{n})} \\
&= (\hat{\theta}_{n+1}-\hat{\theta}_{n}) s(x;\hat{\theta}_{n}) + \frac{(\hat{\theta}_{n+1}-\hat{\theta}_{n})^2}{2}\frac{p''(x;\nu_{n})}{p(x;\hat{\theta}_{n})}.
\end{align*}

\noindent Plugging in the above in \eqref{eq:param1} results into two separate integrals. Recalling that $(\hat{\theta}_{n+1}-\hat{\theta}_{n})=\eta_{n+1} Z (x_{n+1}; \hat{\theta}_n)$, the first term can be rewritten as
\begin{align*}
\mathbb{E} & \Big( 
\int_{A}p(x;\hat{\theta}_{n}) \eta_{n+1} Z(x_{n+1};\hat{\theta}_{n}) s(x;\hat{\theta}_{n}) \di x| \mathcal{G}_{n}\Big)= \\
& =\eta_{n+1} \int_{\mathbb{R}} p(x_{n+1};\hat{\theta}_{n})\int_{A}p(x;\hat{\theta}_{n}) \mathcal{I}(\hat{\theta}_{n})^{-1}s(x_{n+1};\hat{\theta}_{n}) s(x;\hat{\theta}_{n}) \di x \di x_{n+1}\\
&=\eta_{n+1}\mathcal{I}(\hat{\theta}_{n})^{-1}\int_{A} p(x;\hat{\theta}_{n}) s(x;\hat{\theta}_{n}) \int_{\mathbb{R}} p(x_{n+1};\hat{\theta}_{n})  s(x_{n+1};\hat{\theta}_{n})  \di x_{n+1} \di x =0,
\end{align*}
where the last equality follows from $\int_{\mathbb{R}} p(x_{n+1};\hat{\theta}_{n})  s(x_{n+1};\hat{\theta}_{n})  \di x_{n+1}=0$. The second integral can be written as 
\begin{align*}
\mathbb{E} & \Big( 
\int_{A}p(x;\hat{\theta}_{n}) \frac{(\hat{\theta}_{n+1}-\hat{\theta}_{n})^2}{2}\frac{p''(x;\nu_{n})}{p(x;\hat{\theta}_{n})} \di x| \mathcal{G}_{n}\Big)=\\
&=\int_{\mathbb{R}} p(x_{n+1};\hat{\theta}_{n}) \int_A p(x;\hat{\theta}_{n}) \frac{(\hat{\theta}_{n+1}-\hat{\theta}_{n})^2}{2}\frac{p''(x;\nu_{n})}{p(x;\hat{\theta}_{n})} \di x \, \di x_{n+1}\\
&= \int_{\mathbb{R}} p(x_{n+1};\hat{\theta}_{n}) \frac{(\hat{\theta}_{n+1}-\hat{\theta}_{n})^2}{2} \int_A p(x;\hat{\theta}_{n}) \frac{p''(x;\nu_{n})}{p(x;\hat{\theta}_{n})} \di x \, \di x_{n+1}\\
&\leq C \int_{\mathbb{R}} p(x_{n+1};\hat{\theta}_{n}) \frac{(\hat{\theta}_{n+1}-\hat{\theta}_{n})^2}{2} \di x_{n+1},
\end{align*}
where $C<\infty$ is constant  that exists by Assumption~\ref{ass:param_acid} d). 

\noindent $(X_n)_{n \geq 1}$ is a.c.i.d. with $\xi_n$ given  by 
\begin{equation}\label{eq:param_boot_xi_n}
\xi_n = C \frac{\eta_{n+1}^2}{2}\int_{\mathbb{R}} p(x_{n+1};\hat{\theta}_{n}) Z^2(x_{n+1};\hat{\theta}_{n}) \di x_{n+1} = C \frac{\eta_{n+1}^2}{2}\mathcal{I}(\hat{\theta}_n)^{-1}
\end{equation}

\noindent Now, it holds that $\sum_{n=0}^{\infty} \xi_n< \infty$ a.s.. To see this, note that, by continuous mapping theorem, $\mathcal{I}(\hat{\theta}_n)^{-1} \stackrel{\text{a.s.}}{\to} \mathcal{I}(\theta)^{-1}$ , for some possibly random $\mathcal{I}(\theta)^{-1}$, and with Assumption~\ref{ass:param_acid} b) guaranteeing that it does not diverge. The summability follows by Lemma~\ref{eq:summability}. The results follows by Proposition~\ref{thm:asym.exch}.
\end{proof}

\begin{proof}(Proof of Proposition~\ref{prop:param_boot_constrained}). It is not necessary to show that $(\tilde{\theta}_{n})_{n\geq 1}$ converges almost surely. We follow the same steps as in the proof of Proposition~\ref{prop:param_boot}, up to the definition of $\xi_n$ in \eqref{eq:param_boot_xi_n}. At this point, the information function $\mathcal{I}(\theta)$ is assumed to be continuous and greater than $\epsilon$ (Assumption ~\ref{ass:param_acid} b)), so it achieves a minimum, greater than $\epsilon$, on the compact set $\Theta=[\underline{\theta},\bar{\theta}]$. Therefore, we can upper bound equation~\eqref{eq:param_boot_xi_n} by $C \frac{\eta_{n+1}^2}{2\epsilon}$, which is summable under the assumptions of Proposition~\ref{prop:param_boot_constrained}. 
\end{proof}

\begin{proof}(Proof of Example~\ref{ex:studentt}) 
(Student-t distribution)  Let $P_{\theta}$ be a location-scale Student-$t$ distribution with location $\theta$ and known scale $\tau$ and degrees of freedom $\nu$, with density $p(x;\theta)$  with density
\begin{equation*} 
p(x;\theta ) = \frac{\Gamma\left(\frac{\nu+1}{2}\right)}{\sqrt{\nu \pi} \tau \Gamma\left(\frac{\nu}{2}\right)} \left(1 + \frac{(x - \theta)^2}{\nu \tau^2} \right)^{-\frac{\nu+1}{2}}.
\end{equation*}
We need to show that $\int p''(x;\theta)\di x <\infty$ to show Assumption~\ref{ass:param_acid} d).
\noindent The first derivative $p'(x;\theta)$ of $p(x;\theta)$ is
\begin{align*} 
  \frac{d p(x;\theta)}{d \theta}&  
  = p(x;\theta) \left(\frac{\nu+1}{\nu \tau^2}\right)\left(1 + \frac{(x - \theta)^2}{\nu \tau^2} \right)^{-1} (x-\theta).
\end{align*}
Differentiating again and simplifying, the second derivative $p''(x;\theta)$ is 
\begin{align*}
\frac{d^{2} p(x;\theta)}{d^{2}\theta} 
& =  p(x;\theta) \left(\frac{(\nu+1)(\nu+3)}{(\nu \tau^2)^{2}}\right)\left(1 + \frac{(x - \theta)^2}{\nu \tau^2} \right)^{-2} (x-\theta)^{2} +\\
& \hspace{1cm} - p(x;\theta) \left(\frac{\nu+1}{\nu \tau^{2}}\right)   \left( 1 + \frac{(x - \theta)^2}{\nu \tau^2} \right)^{-1}.
\end{align*}
In order to show that $\int p''(x;\theta) \di x < C < \infty$, for some constant $C$, we can just show that, after dropping constants and changing variable to $y=x-\theta$, the following two integrals are both bounded, 
\begin{align*}
  \int_{-\infty}^{\infty}   p(y;\theta) \left(1 + \frac{y^2}{\nu \tau^2} \right)^{-2} y^{2} dy & \propto \int_{-\infty}^{\infty}   p(y;\theta) \frac{y^{2}}{(\nu \tau^2 +y^{2})^{2}} \di y \\
  &\leq \frac{1}{4\nu \tau^{2}}\int_{-\infty}^{\infty}   p(y;\theta)  \di y = \frac{1}{4\nu \tau^{2}},
\end{align*}
where the inequality follows from the fact that the function $f(y) =\frac{y^{2}}{(\nu \tau^2 +y^{2})^{2}}$ is positive and bounded, with maxima at $y=\pm\sqrt{\nu \tau^2}$. The second integral,
\begin{align*}
    \int_{-\infty}^{\infty}  p(y;\theta)    \left( 1 + \frac{y^2}{\nu \tau^2} \right)^{-1}  \di y \propto \int_{-\infty}^{\infty}   p(y;\theta)     \frac{1}{y^2+ \nu \tau^2}   \di y \leq   \int_{-\infty}^{\infty}   p(y;\theta) \frac{1}{\nu \tau^2}       \di y = \frac{1}{\nu \tau^2}.
\end{align*} 
\end{proof}

\subsection{Proofs of Section~\ref{sec:kernel}}

To prove Proposition~\ref{prop:kernel.cid}, we first consider the sequence of densities $(p_n)_{n\geq 0}$ corresponding to the sequence of measures $(\alpha_n)_{n \geq 0}$ defined in \eqref{eq:kernel_Fn} such that $\alpha_n(A) =\int_A p_n(x) \di x$ holds for all $A\in \mathcal{B}(\mathbb{R})$ and $n\geq 0$.  Existence of such densities is guaranteed by construction. $p_n(x)$ can be similarly defined recursively: 
\begin{equation} \label{eq:kernel_densities}
p_{n}(x)=\frac{1}{n}\sum_{i=1}^{n-1} K_i(x-X_i) =\frac{n-1}{n}p_{n-1}(x)+\frac{1}{n} K_n(x-X_{n}),
\end{equation}
where recall that $K_n(x)=\frac{1}{h_n}K(\frac{x}{h_n})$. 

\begin{lemma}\label{lem:kernel.density} Consider the sequence $(p_n)_{n \geq 0}$ defined by \eqref{eq:kernel_densities}. Suppose that $\mathbb{E}(p_{n+1}(x)|\mathcal{G}_{n})=p_{n}(x)$ holds $a.s.$ for all $x \in \mathbb{R}$, then it must be that $K(x)=\delta(x)$, the Dirac delta function. 
\end{lemma}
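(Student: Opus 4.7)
The approach is to turn the martingale hypothesis into a deterministic convolution identity and then invoke a characteristic-function argument.

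First, I would substitute the recursion \eqref{eq:kernel_densities} into the hypothesis. Writing $p_{n+1}(x) = \frac{n}{n+1} p_n(x) + \frac{1}{n+1} K_{n+1}(x - X_{n+1})$ and using that $X_{n+1}$ has conditional density $p_n$ given $\mathcal{G}_n$,
\[
\mathbb{E}(p_{n+1}(x)\mid \mathcal{G}_n) \;=\; \frac{n}{n+1}\, p_n(x) + \frac{1}{n+1}\,(K_{n+1} \ast p_n)(x).
\]
Setting this equal to $p_n(x)$ and simplifying yields
\[
(K_{n+1} \ast p_n)(x) \;=\; p_n(x) \qquad \text{for every } x\in\mathbb{R},
\]
almost surely, for each $n\geq 0$. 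Pooling the null sets coming from the ``for all $x$'' requirement is handled by continuity of both sides in $x$ and restriction to a countable dense subset (say the rationals).

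Second, I would read this equation probabilistically: conditional on $\mathcal{G}_n$, if $Y$ has density $p_n$ and $W\sim K_{n+1}$ is independent of $Y$, then $Y + W$ has the same distribution as $Y$. Taking characteristic functions gives $\hat{K}_{n+1}(\xi)\,\hat{p}_n(\xi) = \hat{p}_n(\xi)$ for every $\xi\in\mathbb{R}$. Since $\hat{p}_n$ is continuous with $\hat{p}_n(0)=1$, it is nonzero on an open neighborhood $U$ of the origin, and therefore $\hat{K}_{n+1}(\xi)=1$ for all $\xi\in U$. A characteristic function equal to one on any neighborhood of the origin forces the underlying law to be the point mass at $0$: $\hat{K}_{n+1}(\xi)=1$ on $U$ gives $e^{i\xi W}=1$ a.s., so $\xi W\in 2\pi\mathbb{Z}$ a.s.; picking two incommensurate $\xi\in U$ collapses the lattice to $\{0\}$. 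Hence $W=0$ a.s., i.e.\ $K_{n+1}$ is the Dirac mass at $0$, and rescaling via $K_{n+1}(x)=h_{n+1}^{-1}K(x/h_{n+1})$ forces $K=\delta$.

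The main obstacle I anticipate is the technical handling of the quantifier ``for all $x\in\mathbb{R}$, a.s.'' so that the convolution identity, and in turn the Fourier step, can be applied on a single good $\omega$-set. A substantially shorter route is available under the (standard) assumption that $K$ has a finite second moment: the identity in distribution $Y+W \stackrel{\text{d}}{=} Y$ gives $\mathrm{Var}(W)=0$ at once, which already forces $K$ to be a point mass. I would include the characteristic-function version as the main argument since it avoids any moment assumption on the kernel.
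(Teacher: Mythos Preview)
Your proposal is correct and follows the same overall strategy as the paper: reduce the martingale hypothesis to a convolution identity and conclude via characteristic functions. The execution differs in one substantive place. The paper expands $p_n$ as $\tfrac{1}{n}\sum_i K_i(\cdot - X_i)$ and, invoking the argument of West (1991) (letting subsets of the data tend to infinity so that the corresponding kernel terms vanish), isolates each summand to obtain $K_{n+1}\ast K_i = K_i$ for every $i$, and only then passes to characteristic functions. You bypass this decomposition entirely and work directly with $K_{n+1}\ast p_n = p_n$, using only that $\hat p_n$ is continuous with $\hat p_n(0)=1$ to get $\hat K_{n+1}=1$ on a neighborhood of the origin. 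Your route is shorter and avoids the somewhat informal ``send data to infinity'' step; the paper's route has the minor advantage that the factor $\gamma_i$ is a fixed (deterministic) characteristic function rather than a random one. Your argument that $\hat K_{n+1}\equiv 1$ on a neighborhood of $0$ forces $K_{n+1}=\delta_0$ is also more explicit than the paper's one-line claim.

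On the quantifier issue you flag: your continuity-plus-rationals patch requires $K$ continuous, which the paper does not assume (e.g.\ the uniform kernel). A cleaner fix that covers all bounded integrable $K$ is Fubini: the hypothesis gives $\mathbb{E}\bigl[\int |(K_{n+1}\ast p_n)(x)-p_n(x)|\,\di x\bigr]=0$, so for a.s.\ $\omega$ the identity holds for Lebesgue-a.e.\ $x$, which is all you need to equate Fourier transforms.
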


\begin{proof} Fix $x \in \mathbb{R}$
		\begin{align}
 \mathbb{E}\left( p_{n+1}(x) |\mathcal{G}_{n}\right) 
		&=  \mathbb{E}\left( \frac{n}{n+1}p_{n}(x)+\frac{1}{n+1} K_{n+1}\left(x-X_{n+1}\right) |\mathcal{G}_{n}\right) \nonumber \\
		&=  p_{n}(x)+\frac{1}{n+1}  \left[\mathbb{E}\left( K_{n+1}\left(x-X_{n+1}\right)   |\mathcal{G}_{n}\right) -p_{n}(x)\right]. \nonumber
	\end{align}
For $\mathbb{E}(p_{n+1}(x)|\mathcal{G}_{n})=p_{n}(x)$ to hold, it must be that $\mathbb{E}\left( K_{n+1}\left(x-X_{n+1}\right)   |\mathcal{G}_{n}\right)=p_{n}(x)$. In other words,
\begin{align*}
    \mathbb{E} & \left( K_{n+1}\left(x-X_{n+1}\right)   |\mathcal{G}_{n}\right)-p_{n}(x) = \\
    & =\int K_{n+1}\left(x-x_{n+1}\right)\left( \frac{1}{n} \sum_{i=1}^n K_i (x_{n+1} -X_i) \right)\di x_{n+1}-p_{n}(x) \\
    &=\frac{1}{n}\sum_{i=1}^n \left[  \int K_{n+1}\left(x-x_{n+1}\right) K_i (x_{n+1} -X_i) \di x_{n+1}-K_i(x-X_i)\right].
\end{align*}
By the same argument in the proof of  \cite{west1991kernel} Theorem 1, the term above equals zero if and only if all the terms in the square bracket equal zero. The reason is that it must hold for all $n$ a.s., so for all samples $x_{1:n}$ except for some measure zero set. Letting subsets of the data go to infinity, the corresponding kernel density goes to $0$, which implies that all the summands must be equal to zero. I.e., we need to show that
\begin{align*}
\int & K_{n+1}\left(x-x_{n+1}\right)  K_i (x_{n+1} -X_i) \di x_{n+1}-K_i (z) 
=\int K_{n+1}(z-y) K_i(y) \di y - K_i (z)=0,
\end{align*}
where the first equality follows by substitution. The convolution of densities above has as characteristic function the product of characteristic function. The equation can be rewritten in terms of characteristic functions corresponding to the densities above
\[
\gamma_{n+1}(t)\gamma_i(t) -\gamma_i(t)=0.
\]
$\gamma_i(t)$ cannot be equal to $0$ for all $t$, which implies that $\gamma_{n+1}(t)=1$. This completes the proof.
\end{proof}

\begin{proof}[Proof of Proposition~\ref{prop:kernel.cid}]
	In order to prove that the sequence is c.i.d., we need to check that $(\alpha_{n}(A))_{n\in \mathbb{N}}$ is a martingale for all $A \in \mathcal{E}$. It is actually sufficient to check the martingale property for sets of the form $A= (-\infty,t]$ for all $t \in \mathbb{R}$. Writing $ \alpha_{n+1}((-\infty,t])=F_{n+1}(t)$, we have that
	\begin{align}\label{eq:kernel_acid}
 \mathbb{E}\left( F_{n+1}(t) |\mathcal{G}_{n}\right) 
		&=  \mathbb{E}\left( \frac{n}{n+1}F_{n}(t)+\frac{1}{(n+1) h_{n+1}} \int_{-\infty}^{t} K\left(\frac{x-X_{n+1}}{h_{n+1}}\right) \di x |\mathcal{G}_{n}\right) \nonumber \\
		& =   \frac{n}{n+1}F_{n}(t)+\frac{1}{(n+1) h_{n+1}} \mathbb{E}\left(\int_{-\infty}^{t} K\left(\frac{x-X_{n+1}}{h_{n+1}}\right) \di x |\mathcal{G}_{n}\right)\nonumber \\
		&=  F_{n}(t)+\frac{1}{(n+1)}  \left[ \mathbb{E}\left(\int_{-\infty}^{t} \frac{1}{h_{n+1}} K\left(\frac{x-X_{n+1}}{h_{n+1}}\right)  \di x |\mathcal{G}_{n}\right) -F_{n}(t)\right] \nonumber
	\end{align}
	In order to be a martingale, the second term needs to be equal to zero for all $n\in \mathbb{N}$. Therefore,
	\begin{align*}
	\mathbb{E}\left(\int_{-\infty}^{t} 	\frac{1}{h_{n+1}}  K\left(\frac{x-X_{n+1}}{h_{n+1}}\right) \di x |\mathcal{G}_{n}\right)  &= F_{n}(t)= \mathbb{P}(X_{n+1}\leq t|\mathcal{G}_{n}) \\
    &=\mathbb{E}(\mathbb{I}(X_{n+1}\leq t)|\mathcal{G}_{n}) 
	\end{align*}
	The integral on the left hand side is
	\begin{equation*}
	\int_{-\infty}^{t} 	\frac{1}{h_{n+1}} K\left(\frac{x-X_{n+1}}{h_{n+1}}\right) \di x = \int_{-\infty}^{\frac{t-X_{n+1}}{h_{n+1}}} K\left(u \right) \di u
	\end{equation*}
	taking the limit $h_{n+1} \to 0^{+}$, we see that if $t-X_{n+1}>0$, $\lim_{h_{n+1}\to 0^{+}} \int_{-\infty}^{\frac{t-X_{n+1}}{h_{n+1}}} K\left(u \right) \di u = \int_{-\infty}^{+\infty} K\left(u \right) \di u =1$. Instead, if $t-X_{n+1}<0$ $\lim_{h_{n+1}\to 0^{+}} \int_{-\infty}^{\frac{t-X_{n+1}}{h_{n+1}}} K\left(u \right) \di u = \int_{-\infty}^{-\infty} K\left(u \right) \di u =0$. Therefore, 
	\begin{equation*}
		\lim_{h_{n+1}\to 0^{+}} \int_{-\infty}^{\frac{t-X_{n+1}}{h_{n+1}}} K\left(u \right) \di u = \mathbb{I}(X_{n+1}\leq t).
	\end{equation*}
	a.s. with respect to $\mathbb{P}(\cdot|\mathcal{G}_{n})$ (given that the event $\{X_{n+1} = t\}$ has probability zero under this conditional distribution). Because this needs to hold for all $n\in \mathbb{N}$, the sequence of r.v.s is c.i.d. if for all $n\in\mathbb{N}$ $h_{n}\to0^+$, which corresponds to the case of empirical distribution function, $F_{n}(t)= \frac{1}{n}\sum_{i=1}^{n} \mathbb{I}(X_{i}\leq t)$.

\noindent To prove necessity, note that 
\begin{align*}
 \mathbb{E}\left( \alpha_{n+1}(A) |\mathcal{G}_{n}\right) 
		&= \alpha_{n}(A)+\frac{1}{(n+1)}  \left[ \mathbb{E}\left(\int_A K_{n+1}\left(x-X_{n+1}\right)  \di x |\mathcal{G}_{n}\right) -
        \alpha_{n}(A)\right]\\ 
        &=\alpha_{n}(A)+\frac{1}{(n+1)}  \left[ \int_A \mathbb{E}\left( K_{n+1}\left(x-X_{n+1}\right) |\mathcal{G}_{n}\right)\di x -
       \int_A p_{n}(x) \di x \right].
	\end{align*} 
    In order for the term in the squared bracket to be equal to zero for all $A \in \mathcal{B}(\mathbb{R})$, $$\mathbb{E}\left( K_{n+1}\left(x-X_{n+1}\right) |\mathcal{G}_{n}\right)=p_{n}(x)$$ almost everywhere a.s. (except for some measure zero set). The result follows by Lemma~\ref{lem:kernel.density}.
\end{proof}

\begin{proof}[Proof of Proposition~\ref{prop:kernel.acid}] First notice that the predictive distribution in \eqref{eq:kernel_Fn} can be written non-recursively as
	\begin{equation} \label{ex:kernel.predictive}
		\alpha_{n}(A)= \frac{1}{n h_{i}}\sum_{i=1}^{n}\int_{A} K\left( \frac{x- X_{i}}{h_{i}} \right)dx = \frac{1}{n}\sum_{i=1}^{n} \mu_i(A;X_i)
	\end{equation}
	Moreover, for any $A\in \mathcal{B}(\mathbb{R})$, by \eqref{eq:kernel_Fn}, we have that
	\begin{align*}
		\mathbb{E} (\alpha_{n+1}(A)-\alpha_{n}(A)|\mathcal{G}_n)= \frac{1}{n+1} \left[\mathbb{E} \left( \int_A \frac{1}{h_{n+1}} K\left(\frac{x-X_{n+1}}{h_{n+1}}\right) \di x \bigg|\mathcal{G}_n \right) - \alpha_{n}(A)\right].
	\end{align*}
	Because $X_{n+1}|\mathcal{G}_{n}\sim \alpha_{n}$ and then plugging \eqref{ex:kernel.predictive}, the expectation on the right hand side equals to
	\begin{align*}
		\mathbb{E} &\left( \int_A \frac{1}{h_{n+1}} K\left(\frac{x-X_{n+1}}{h_{n+1}}\right) \di x \bigg|\mathcal{G}_n \right)= \\
        &\hspace{1cm}=\int \int_A \frac{1}{h_{n+1}} K\left(\frac{x-x_{n+1}}{h_{n+1}}\right) \di x  \, \alpha_n(\di x_{n+1}) \\
		&\hspace{1cm}=\int \int_A \frac{1}{h_{n+1}} K\left(\frac{x-x_{n+1}}{h_{n+1}}\right) \frac{1}{n}\sum_{i=1}^n \frac{1}{h_i}  K\left(\frac{x_{n+1}-X_i}{h_{i}}\right) \di x \, \di x_{n+1} \\
		&\hspace{1cm}= \frac{1}{n}\sum_{i=1}^n  \int_A \int  \frac{1}{h_{n+1}} K\left(\frac{x-x_{n+1}}{h_{n+1}}\right) \frac{1}{h_i}  K\left(\frac{x_{n+1}-X_i}{h_{i}}\right) \di x_{n+1} \, \di x\\
		&\hspace{1cm}=\frac{1}{n}\sum_{i=1}^n \mu_{n+1*i}(A;X_i).
	\end{align*}
	Hence,
	\begin{align*}
		\mathbb{E} (\alpha_{n+1}(A)|\mathcal{G}_n)-\alpha_{n}(A)=& \frac{1}{n+1} \left(\frac{1}{n}\sum_{i=1}^{n} \mu_{n+1*i}(A;X_i) - \frac{1}{n}\sum_{i=1}^{n} 	\mu_{i}(A;X_i) \right)\\
		= & \frac{1}{n+1} \frac{1}{n}\sum_{i=1}^{n} \left( \mu_{n+1*i}(A;X_i) - 
		\mu_{i}(A;X_i) \right)\\
		\leq & \frac{1}{n+1} \frac{1}{n}\sum_{i=1}^n C \frac{h^d_{n+1}}{h^d_{i}}= \xi_{n},
	\end{align*}
where the last inequality follows by  \eqref{eq:kernel_tv}. This shows that $(X_n)_{n\geq 1}$ is $\mathcal{G}$-a.c.i.d.  with $\xi_n$ given in the last line above.
	
	\noindent The summability of $(\xi_n)_{n\geq 1}$ follows from  Theorem 3.29 and Theorem 3.25 in \cite{rudin1964principles}, which implies asymptotically exchangeability of $(X_n)_{n\geq 1}$ by Theorem~\ref{thm:asym.exch}.
\end{proof}

\section{Simulations}

All code to reproduce the simulation study is available at \url{https://github.com/lorenzocapp/acid_paper}. We include here some details for completeness. 

\subsection{Simulations: Data Generating Mechanism}\label{sec:data_generation}

\noindent \textit{Section~\ref{sec:ill_density}.} We repeat $100$ times the following: i) sample a random distribution $P$ (as described in the next paragraph), ii) given $P$, sample $i.i.d.$ $n$ times from $P$. 

In Step i) we generate a mixture as follows: a) Sample number of components $n_{comp}\sim \mathcal{U}\mathcal{D}\{2,3\}$ ($\mathcal{U}\mathcal{D}$ stands for uniform discrete), b) sample mean of each kernel component $(\mu_1,\ldots,\mu_{n_{comp}})$ each with a marginal distribution $\mathcal{U}(-3,3)$ and repeat until $|\mu_i-\mu_j| \geq 1.5$ for all $i\neq j$, c) sample scale parameter of each mixture component $\sigma^2_i \sim \mathcal{U}(0.5,2)$ for $i=1,\ldots,n_{comp}$, d) sample kernel type $K_i\sim \mathcal{U}\mathcal{D}\{\text{Gaussian},\text{Student-t (df=5)}\}$ for $i=1,\ldots,n_{comp}$, e) sample $u_i \sim \mathcal{U}(0,1)$ for $i=1,\ldots,n_{comp}$ and define weights of the mixture $(w_1,\ldots,w_{n_{comp}})=(u_1/\sum u_i, \ldots, u_{n_{comp}}/\sum u_i)$.

\noindent \textit{Section~\ref{sec:ill_regression}.} There are three data generating mechanisms. For all of them, we repeat $100$ times the following: i) generate a regression function $f$, ii) generate covariates in training data $i.i.d.$ $X_i \iidsim \mathcal{U}(0,5)$ $i=1,\ldots, n_{train}$, define covariates in test data on a regular grid $[0,4.95]$ of $100$ points) iii) given $f$, $Y_i = f(x_i) + \epsilon_i$ for $i=1,\ldots, n_{train}+n_{test}$.

\textit{Data generating mechanism 1.} $f\sim \mathcal{GP} (0, K)$ where $K$ has entries $k(x,y)=\exp\{||x-y||_2\}$ when $x\neq y$ and $k(x,x)=1+10^{-8}$ ($10^{-8}$ is the nugget). $\epsilon_i \iidsim \text{N}(0,1)$. 

\textit{Data generating mechanism 2.} $f\sim \mathcal{GP} (0, K)$ where $K$ has entries $k(x,y)=\exp\{||x-y||_2\}$ when $x\neq y$ and $k(x,x)=1+10^{-8}$ ($10^{-8}$ is the nugget). $\epsilon_i \iidsim \text{Student-t (df=5)}$. 

\textit{Data generating mechanism 3.} $f\sim \mathcal{GP} (0, K)$ where $K$ has entries $k(x,y)=0.8\exp\{||x-y||_2\}+0.2\exp\{||x-y||_1\}$ when $x\neq y$ and $k(x,x)=1+10^{-8}$ ($10^{-8}$ is the nugget). $\epsilon_i \iidsim \text{N}(0,1)$. 

\subsection{Implementation: Alternative Methodologies}\label{sec:alternatives}

\noindent \textit{Gaussian Copula (GauC) \citep{hahn18}.} We use the predictive resampling implementation of \cite{fong2021martingale}, available at \url{https://github.com/edfong/MP}, which includes a dedicated Python module (we refer explicitly to the relevant files below). In Section~\ref{sec:ill_density}, the implementation is based on the file \texttt{1\_univariate\_copula.py}, with a minor modification: we impose a lower bound on the copula correlation parameter $\rho$, which in the authors' code is set to a value $\rho_{auto}$ following an automatic selection procedure. This change addresses cases where the automatic selection procedure yields $\rho_{auto} \approx 0$, effectively reducing the model to the independence copula—i.e., resulting in no update. We considered a grid of lower bounds $\rho_{lower}\in\{0.4, 0.5, 0.6\}$, set $\rho_{\text{selected}} = \max\{\rho_{\text{auto}}, \rho_{lower}\}$, and adopted the setting with the best empirical performance: $0.4$. Specifically, we set $\rho_{\text{selected}} = \max\{\rho_{\text{auto}}, 0.4\}$. This simple adjustment led to substantial improvements over just using $\rho_{auto}$. In Section~\ref{sec:ill_data}, we used \texttt{1\_univariate\_copula.py} for the galaxy data, and \texttt{2\_bivariate\_copula.py} for the air quality data (no modifications). For further implementation details, we refer to \cite{fong2021martingale}.

\noindent \textit{Dirichlet Process Mixture (DPM) \citep{lo84}.} We use the \texttt{R} package \texttt{dirichletprocess} of \cite{ross2018dirichletprocess}, following the code provided at \url{https://github.com/edfong/MP}, which was designed to compare DPM with GauC (we refer to the exact scripts below). In Section~\ref{sec:ill_density}, we use \texttt{1\_gmm\_dpmm.R} for density estimation with a Gaussian location-scale Dirichlet Process mixture. For the Gaussian-Inverse Gamma base measure, we set the hyperparameters to $c(0, k, 0.1, 0.1)$. In Section~\ref{sec:ill_regression}, we estimate a joint bivariate density using a multivariate Gaussian location-scale DPM. We set the hyperparameters of the multivariate Gaussian-Wishart base measure to $c(\textbf{0}, \textbf{I}, \textbf{I}, \textbf{I})$. From the posterior, we numerically approximate the regression function. In Section~\ref{sec:ill_data}, we use \texttt{1\_galaxy\_dpmm.R} for the galaxy data (Gaussian DPM with unknown mean and variance) and \texttt{4\_lidar\_DPMM.R} for the LIDAR data, applying the same regression function used in Section~\ref{sec:ill_regression}.

\noindent \textit{Gaussian Processes.} We use the \texttt{R} package \texttt{laGP} \citep{gramacy2016lagp}. In Section~\ref{sec:ill_regression}, we define an anisotropic GP object via \texttt{newGPsep}, optimize kernel parameters using \texttt{mleGPsep}, and predict on the test set using the mean from \texttt{predGPsep}.

\subsection{Implementation: Predictive Resampling for Kernels }\label{sec:implementation_details}
Algorithm~\ref{alg:KerP} outlines the implementation of predictive resampling with kernel methods (KerP) used in Section~\ref{sec:simulations}. Given a statistical estimator $f$, the predictive resampling component of the algorithm (lines $15$–$21$) follows a standard bootstrap-like scheme. What distinguishes KerP is its specific considerations regarding parameter selection. Apart from the choice of kernel (a comparison is included in the main manuscript), the only parameter to be specified is the bandwidth sequence $(h_n)_{n \geq 1}$. While bandwidth selection has been widely studied in the contexts of kernel density estimation and kernel regression (see \cite{jones1996brief} for a review), its application within a bootstrap (predictive resampling) framework is novel. We leave the development of a dedicated method for bandwidth selection tailored to predictive resampling to future work. Here, we propose a heuristic approach  (lines $1$–$14$) that combines existing bandwidth selection methods with the theoretical results presented in this paper. We describe the rationale for this proposal below.

First, observe that it is not necessary to choose an initial distribution $\alpha_0(\cdot)$ or a full sequence $h_{1:n+M}$ in advance. Given a sample $x_{1:n}$, the predictive resampling procedure does not require the full sequence $(\alpha_i)_{0:n-1}$. We decompose the procedure into two stages: (1) estimate the predictive distribution $\alpha_n(\cdot)$, and (2) define a sequence $h_{n:n+M}$ to iteratively update $\alpha_n(\cdot)$ via \eqref{eq:kernel_Fn} and perform sampling.

In the first stage, we estimate the density $p_n(x)$ associated with $\alpha_n(\cdot)$ using a ``standard" kernel density estimator, where all observations are assigned the same bandwidth. This design choice allows us to leverage existing bandwidth selection methods. Specifically, choose a method $g$ that given a sample outputs a bandwidth (see next subsection for a discussion of the methods $g$ considered): $h_n = g(x_{1:n})$, where the subscript emphasizes the sample size dependency—common in most methods. Compute $h_n=g(x_{1:n})$ and then set $h_i = h_n$ for all $i \leq n$. This is a natural choice, as it corresponds to a standard kernel density estimate and ensures that all observed data points have equal resampling weights. As opposed to the ``synthetic" data points generated in predictive resampling which will have decaying weights. 

To construct the sequence $h_{n:n+M}$, we refer to Proposition~\ref{prop:kernel.acid}, which requires that $(h_n)_{n \geq 1}$ satisfies condition \eqref{eq:kernel_bandwidth_cond}. One way to ensure this is to use an exponentially decaying sequence; see the discussion in Section~\ref{sec:kernel}. We therefore define $h_{n+i} = b_1 e^{-b_2 i}$, where $b_1$ and $b_2$ are chosen such that the sequence interpolates between $h_n$ and $h_{n+M}$. How to obtain $h_n$ was described in the previous paragraph. To determine $h_{n+M}$, we draw a synthetic dataset of size $M$ via the Bayesian bootstrap and evaluate $h_{n+M} = g(x_{1:n+M})$. 
To reduce the variance of $h_{n+M}$ (it is a function of the synthetic data set generated through the Bayesian bootstrap), we repeat this step ten times and average the results. We then set $b_1 = h_n$ and compute $b_2 = \log(h_n / h_{n+M}) / M$. In numerical studies, we noticed that the bandwidth sequence can be benefit a scaling to avoid trajectories that are too smooth or too bumpy. We added a global scaling $c$ for this. Results in the simulation study are with $c=1$ (except in the data analysis section).


\begin{algorithm}[!t]
\label{alg:KerP}
\caption{KerP predictive resampling (with bandwidth selection)}

\footnotesize
  Inputs: $x_{1:n}$, $K$ (kernel), $g$ (bandwidth selection method), $c$ (scaling factor bandwidth), $f$ (estimator of interest) \\

\begin{algorithmic}[1]
  \State Compute $h_n=f(x_{1:n})$. \Comment{Initialize bandwidth}
  \State Set $h_{1:n-1}=h_n$.
  \For {$j=1$ \textbf{to}  $10$} \Comment{Determine endpoint of the bandwidth sequence}
   \State Sample $X_{n+1:M}$ given $x_{1:n}$ with the Bayesian Boostrap.
   \State Compute $h^{(j)}_{n+M}=g(x_{1:n+M})$.
  \EndFor
  \State  $h_{n+M}=\frac{1}{10} \sum_{j=1}^{10} h^{(j)}_{n+M}$
  \State Set $b_1=h_n$ and $b_2 = \log (h_n/h_{n+M})/M$ \Comment{Define exponentially decaying bandwidth sequence}
  \For {$i=1$ \textbf{to}  $M-1$}
  \State Compute $h_{n+i}=b_1 e^{-b_2 i}$
  \EndFor
  \State Scale $h_{1:n+M}=c \, h_{1:n+M}$
  \State Estimate $p_n(x)$ via kernel density estimate with $K$ and $h_{1:n}$ (e.g., \eqref{eq:kernel_Fn}).  \Comment{Starting distribution for resampling}
  \State Compute $\alpha_n(\cdot)$.
  
  \For {$b=1$ \textbf{to}  $B$} \Comment{Predictive resampling}
  \For{$i=0$ \textbf{to}  $M-1$} 
  \State Sample $X_{n+i}\sim \alpha_n$.
  \State Update $\alpha_{n+i}$ via \eqref{eq:kernel_Fn} with $x_{n+i}$ and $h_{n+i}$.
  \EndFor
  \State Compute $\hat{\theta}^{(b)}_{n+M}=f(x_{1:n+M})$.
  \EndFor
\end{algorithmic}
  \Return{Martingale Posterior : $\hat{\theta}^{(1)}_{n+M},\ldots, \hat{\theta}^{(B)}_{n+M}$}.
\end{algorithm}

\subsection{Sensitivity to bandwidth selection method}\label{sec:sensitivity_bw}

There are many bandwidth selection functions $g$ available; see \cite{jones1996brief}. In this context, there is no difference between density estimation and regression. Recall that even when we use KerP for regression, we need a multivariate density to sample the synthetic data (see Section~\ref{sec:kernel_regr}). However, the bandwidth selection methods differ depending on whether the problem is univariate or multivariate. We explore here the sensitivity of KerP to several bandwidth selection methods available in standard \texttt{R} packages.

In the univariate case, we consider methods implemented in the \texttt{density} function from the \texttt{stats} package, which serves as the base kernel estimator in \texttt{R}: Silverman's ``rule of thumb" (\texttt{nrd0})\citep{silverman2018density}, Scott's variation of Silverman's ``rule of thumb" (\texttt{nrd}) \cite{scott2015multivariate},
the plug-in method of \cite{sheather1991reliable} (\texttt{SJ}) and unbiased cross-validation (\texttt{ucv}). For multivariate settings, we use the \texttt{ks} package \citep{duong2024package}, using the plug-in method of \cite{wand1994multivariate} (\texttt{pi}) and smoothed cross-validation (\texttt{scv}). The corresponding \texttt{R} function names are indicated in parentheses.

To compare univariate methods, we replicate the numerical study from Section~\ref{sec:ill_density}. Table~\ref{tab:bw_density} summarizes the median metrics from that same simulation study (univariate density estimation). While the main manuscript discusses differences between kernel choices, here we focus on comparing bandwidth selection strategies. Overall, performance across SJ and UCV is fairly similar, with the plug-in approach of \cite{sheather1991reliable} slightly outperforming cross-validation. Notably, scaling the bandwidth sequence using the parameter $c$ (see line 14 of Algorithm~\ref{alg:KerP}) can improve performance. For example, the Gaussian kernel with $c=0.9$ outperforms $c=1$—the default shown in the manuscript—by yielding lower bias, higher coverage, and similar width. However, this improvement is inconsistent across kernels: the uniform kernel performs better with $c=1.1$. In this context, Silverman's rule of thumb does not seem to perform well, nor does Scott's variation of Silverman's rule of thumb. 

\begin{table}[!t] \centering 
  \caption{\textbf{Simulations: sensitivity to bandwidth in univariate density estimation.} Median across $100$ data sets for different sample sizes (n). $env$ is a measure of coverage (the higher the better), $dev$ is an L1 bias (the lower the better), and $awd$ is the average width of the credible bands (the lower the better). In KerP, column Ker refers to the kernel, column bw to the bandwidth: nrd0 stands for \cite{silverman2018density} rule of thumbs, nrd for \cite{scott2015multivariate} modification of Silverman's rule of thumb, SJ stands for the bandwidth selection of \cite{sheather1991reliable}, and ucv for unbiased cross-validation. The number in front of the bw method refers to the scaling factor $c$.}
  \label{tab:bw_density}
\scalebox{0.75}{\begin{tabular}{@{\extracolsep{5pt}} cl|ccc|ccc} 
	\\[-1.8ex]\hline 
& & \multicolumn{3}{c }{n = 50 }& \multicolumn{3}{ c}{n = 200 } \\
Ker & Bw & $env$ & $dev$ & $awd$& $env$ & $dev$ & $awd$ \\ 
\hline 
\multirow{12}{*}{\rotatebox{90}{Gaussian}}  & 0.9nrd & $0.694$ & $0.019$ & $0.043$ & $0.691$ & $0.011$ & $0.025$ \\ 
  & 0.9nrd0 & $0.794$ & $0.017$ & $0.047$ & $0.769$ & $0.010$ & $0.028$ \\ 
  & 0.9SJ & $0.850$ & $0.015$ & $0.060$ & $0.809$ & $0.008$ & $0.033$ \\ 
  & 0.9ucv & $0.878$ & $0.016$ & $0.069$ & $0.834$ & $0.008$ & $0.045$ \\ 
  & 1nrd & $0.606$ & $0.021$ & $0.041$ & $0.625$ & $0.012$ & $0.024$ \\ 
  & 1nrd0 & $0.747$ & $0.019$ & $0.045$ & $0.725$ & $0.010$ & $0.026$ \\ 
  & 1SJ & $0.847$ & $0.016$ & $0.057$ & $0.778$ & $0.009$ & $0.031$ \\ 
  & 1ucv & $0.847$ & $0.017$ & $0.066$ & $0.838$ & $0.009$ & $0.043$ \\ 
  & 1.1nrd & $0.509$ & $0.022$ & $0.039$ & $0.544$ & $0.013$ & $0.023$ \\ 
  & 1.1nrd0 & $0.669$ & $0.020$ & $0.043$ & $0.675$ & $0.011$ & $0.025$ \\ 
  & 1.1SJ & $0.812$ & $0.017$ & $0.055$ & $0.756$ & $0.010$ & $0.030$ \\ 
  & 1.1ucv & $0.803$ & $0.018$ & $0.063$ & $0.834$ & $0.009$ & $0.042$ \\ 
  \hline
\multirow{12}{*}{\rotatebox{90}{Laplace}}   & 0.9nrd & $0.384$ & $0.024$ & $0.038$ & $0.388$ & $0.015$ & $0.023$ \\ 
  & 0.9nrd0 & $0.544$ & $0.021$ & $0.042$ & $0.556$ & $0.012$ & $0.025$ \\ 
  & 0.9SJ & $0.772$ & $0.018$ & $0.055$ & $0.713$ & $0.011$ & $0.030$ \\ 
  & 0.9ucv & $0.766$ & $0.019$ & $0.065$ & $0.831$ & $0.010$ & $0.043$ \\ 
  & 1nrd & $0.278$ & $0.026$ & $0.036$ & $0.284$ & $0.017$ & $0.022$ \\ 
  & 1nrd0 & $0.441$ & $0.023$ & $0.040$ & $0.425$ & $0.014$ & $0.024$ \\ 
  & 1SJ & $0.688$ & $0.020$ & $0.053$ & $0.628$ & $0.011$ & $0.029$ \\ 
  & 1ucv & $0.669$ & $0.020$ & $0.060$ & $0.778$ & $0.010$ & $0.040$ \\ 
  & 1.1nrd & $0.219$ & $0.029$ & $0.034$ & $0.206$ & $0.018$ & $0.021$ \\ 
  & 1.1nrd0 & $0.325$ & $0.025$ & $0.038$ & $0.350$ & $0.015$ & $0.023$ \\ 
  & 1.1SJ & $0.625$ & $0.021$ & $0.050$ & $0.569$ & $0.012$ & $0.028$ \\ 
  & 1.1ucv & $0.619$ & $0.021$ & $0.059$ & $0.738$ & $0.012$ & $0.039$ \\ 
  \hline
 \multirow{12}{*}{\rotatebox{90}{Uniform}}  & 0.9nrd & $0.803$ & $0.015$ & $0.058$ & $0.762$ & $0.009$ & $0.034$ \\ 
  & 0.9nrd0 & $0.769$ & $0.014$ & $0.063$ & $0.750$ & $0.009$ & $0.037$ \\ 
  & 0.9SJ & $0.734$ & $0.015$ & $0.077$ & $0.734$ & $0.009$ & $0.043$ \\ 
  & 0.9ucv & $0.756$ & $0.015$ & $0.087$ & $0.744$ & $0.010$ & $0.059$ \\ 
  & 1nrd & $0.812$ & $0.016$ & $0.055$ & $0.766$ & $0.009$ & $0.032$ \\ 
  & 1nrd0 & $0.794$ & $0.014$ & $0.059$ & $0.756$ & $0.009$ & $0.035$ \\ 
  & 1SJ & $0.769$ & $0.015$ & $0.073$ & $0.750$ & $0.008$ & $0.041$ \\ 
  & 1ucv & $0.784$ & $0.015$ & $0.083$ & $0.750$ & $0.009$ & $0.056$ \\ 
  & 1.1nrd & $0.812$ & $0.016$ & $0.052$ & $0.778$ & $0.009$ & $0.030$ \\ 
  & 1.1nrd0 & $0.819$ & $0.015$ & $0.056$ & $0.766$ & $0.009$ & $0.033$ \\ 
  & 1.1SJ & $0.787$ & $0.014$ & $0.070$ & $0.762$ & $0.008$ & $0.039$ \\ 
  & 1.1ucv & $0.803$ & $0.015$ & $0.080$ & $0.762$ & $0.009$ & $0.054$ \\
\hline 
\end{tabular} }
\end{table} 

To compare multivariate methods, we replicated the simulation study from Section~\ref{sec:ill_regression}. 
Table~\ref{tab:bw_regression} summarizes the median metrics considering the three data-generating mechanisms (regression function estimation). Overall, the performance is comparable across bandwidth selection methods (\texttt{pi}, \texttt{cv}) and scaling values ($c \in \{0.9, 1, 1.1\}$). A possible exception is the case of the Gaussian kernel with $c = 0.9$, which exhibits lower bias. Although the performance of the Gaussian kernel is inferior to the other kernel types.

\begin{table}[!t] \centering 
  \caption{\textbf{Simulations: sensitivity to bandwidth in regression function estimation.} Median across $100$ data sets for three different data generating mechanisms ($n_{train}=200$,$n_{test}=100)$. $env$ is a measure of coverage (the higher the better), $dev$ is an L1 bias (the lower the better), and $awd$ is the average width of the credible bands (the lower the better), $MSE$ is the sample mean squared error on test data (the lower the better). In KerP, column Ker refers to the kernel, column bw to the bandwidth:  pi stands for the bandwidth selection of \cite{wand1994multivariate}, and scv for smoothed cross-validation. The number in front of the bw method refers to the scaling factor $c$. }
  \label{tab:bw_regression}
	\scalebox{0.75}{\begin{tabular}{@{\extracolsep{5pt}} cl|cccc|cccc|cccc} 
\\[-1.8ex]\hline 
& & \multicolumn{4}{c}{DGM1: Gaussian $\epsilon$} & \multicolumn{4}{c}{DGM2: Student-t $\epsilon$} &\multicolumn{4}{c}{DGM3: Gauss-Exp Kernel}\\
Ker & Bw & $env$ & $dev$ & $awd$ & $MSE$ & $env$ & $dev$ & $awd$ & $MSE$ & $env$ & $dev$ & $awd$ & $MSE$ \\ 
\hline \\[-1.8ex] 
\multirow{6}{*}{\rotatebox{90}{Gaussian}} & 0.9pi & $0.845$ & $0.183$ & $0.677$ & $1.045$ & $0.850$ & $0.173$ & $0.662$ & $1.044$ & $0.880$ & $0.202$ & $0.806$ & $1.593$ \\ 
  & 0.9scv & $0.830$ & $0.184$ & $0.661$ & $1.045$ & $0.830$ & $0.174$ & $0.647$ & $1.047$ & $0.860$ & $0.210$ & $0.773$ & $1.597$ \\ 
  & 1pi & $0.810$ & $0.186$ & $0.662$ & $1.047$ & $0.820$ & $0.180$ & $0.646$ & $1.048$ & $0.860$ & $0.209$ & $0.781$ & $1.596$ \\ 
  & 1scv & $0.800$ & $0.191$ & $0.640$ & $1.049$ & $0.810$ & $0.181$ & $0.630$ & $1.049$ & $0.830$ & $0.216$ & $0.743$ & $1.599$ \\ 
  & 1.1pi & $0.795$ & $0.194$ & $0.647$ & $1.054$ & $0.800$ & $0.181$ & $0.631$ & $1.048$ & $0.850$ & $0.216$ & $0.765$ & $1.601$ \\ 
  & 1.1scv & $0.770$ & $0.195$ & $0.631$ & $1.053$ & $0.790$ & $0.187$ & $0.618$ & $1.052$ & $0.815$ & $0.226$ & $0.729$ & $1.602$ \\ 
  \hline
 \multirow{6}{*}{\rotatebox{90}{Laplace}} & 0.9pi & $0.910$ & $0.166$ & $0.736$ & $1.033$ & $0.910$ & $0.164$ & $0.722$ & $1.026$ & $0.920$ & $0.205$ & $0.882$ & $1.586$ \\ 
  & 0.9scv & $0.900$ & $0.163$ & $0.712$ & $1.028$ & $0.895$ & $0.161$ & $0.698$ & $1.026$ & $0.910$ & $0.201$ & $0.847$ & $1.579$ \\ 
  & 1pi & $0.900$ & $0.167$ & $0.712$ & $1.030$ & $0.900$ & $0.164$ & $0.701$ & $1.029$ & $0.920$ & $0.198$ & $0.861$ & $1.580$ \\ 
  & 1scv & $0.890$ & $0.164$ & $0.694$ & $1.033$ & $0.890$ & $0.161$ & $0.682$ & $1.032$ & $0.910$ & $0.200$ & $0.831$ & $1.579$ \\ 
  & 1.1pi & $0.890$ & $0.165$ & $0.689$ & $1.030$ & $0.880$ & $0.161$ & $0.680$ & $1.034$ & $0.910$ & $0.195$ & $0.833$ & $1.582$ \\ 
  & 1.1scv & $0.880$ & $0.168$ & $0.682$ & $1.035$ & $0.870$ & $0.164$ & $0.665$ & $1.033$ & $0.900$ & $0.204$ & $0.800$ & $1.584$ \\ 
  \hline
 \multirow{6}{*}{\rotatebox{90}{Uniform}} & 0.9pi & $0.890$ & $0.185$ & $0.769$ & $1.043$ & $0.900$ & $0.180$ & $0.756$ & $1.040$ & $0.870$ & $0.234$ & $0.943$ & $1.606$ \\ 
  & 0.9scv & $0.905$ & $0.175$ & $0.741$ & $1.042$ & $0.895$ & $0.173$ & $0.725$ & $1.041$ & $0.870$ & $0.224$ & $0.897$ & $1.603$ \\ 
  & 1pi & $0.910$ & $0.177$ & $0.737$ & $1.042$ & $0.900$ & $0.174$ & $0.726$ & $1.039$ & $0.865$ & $0.222$ & $0.914$ & $1.606$ \\ 
  & 1scv & $0.910$ & $0.169$ & $0.713$ & $1.036$ & $0.900$ & $0.166$ & $0.701$ & $1.041$ & $0.885$ & $0.211$ & $0.867$ & $1.594$ \\ 
  & 1.1pi & $0.910$ & $0.170$ & $0.719$ & $1.035$ & $0.900$ & $0.166$ & $0.698$ & $1.036$ & $0.870$ & $0.218$ & $0.880$ & $1.596$ \\ 
  & 1.1scv & $0.900$ & $0.165$ & $0.684$ & $1.035$ & $0.900$ & $0.160$ & $0.671$ & $1.035$ & $0.880$ & $0.204$ & $0.836$ & $1.593$ \\ 
\hline 
\end{tabular} }
\end{table} 

\section{Appendix - data analysis}

\subsection{Air quality data}

Figure~\ref{fig:airquality_sd} depicts the posterior standard deviation of the density estimates obtained with KerP and GauC. KerP is run with a Gaussian kernel with $(h_n)_{n \geq 1}$ chosen with the bandwidth multivariate method of \cite{wand1994multivariate}, $M=2000$, $B=500$, and $c=1.2$.

\begin{figure}[!t]
        \begin{center}
            \includegraphics[scale=0.9]{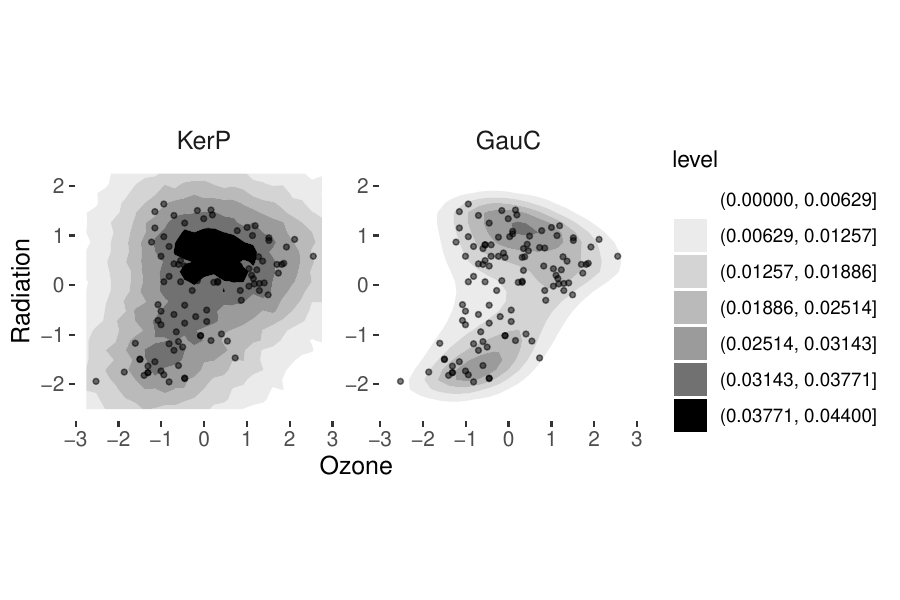}
        \end{center}
        \caption{\textbf{Air quality data: posterior standard deviations.} In each panel: contour plot of the posterior posterior standard deviations. Dots depict the observations. The two panels differ by the method: KerP with Gaussian kernel and bandwidth via the multivariate plug-in method of  \cite{wand1994multivariate} (left),  and Gaussian copula with the same parameters as in \cite{fong2021martingale}(right)}
        \label{fig:airquality_sd}
    \end{figure}

\subsection{LIDAR}

Figures~\ref{fig:lidar_density0} and \ref{fig:lidar_density-3} depict the condition density $p(y|x=0)$ and $p(y|x=-3)$ of the LIDAR regression data set. KerP is run with a Gaussian kernel with $(h_n)_{n \geq 1}$ chosen with the bandwidth multivariate method of \cite{wand1994multivariate}, $M=1000$, $B=600$, and $c=5$. The regression function in the manuscript is obtained with the same parameterization except for $c=2$. The same setup is used for the out-of-sample prediction exercise. 

  \begin{figure}[!t]
        \begin{center}
            \includegraphics[scale=0.8]{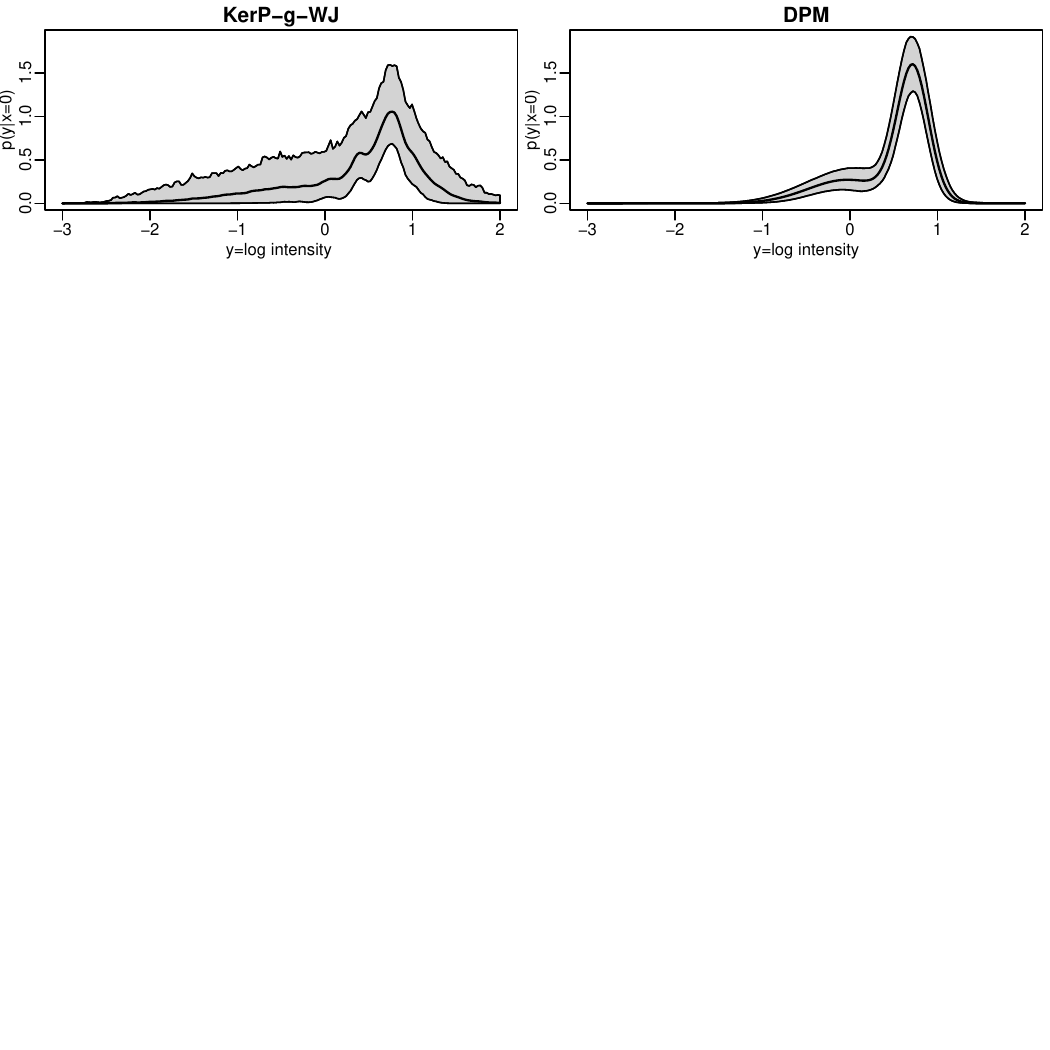}
        \end{center}
        \caption{\textbf{LIDAR: conditional density $p(y|x=0$).} In each panel: posterior mean $\hat{p}$ (solid line), and $95\%$ credible regions (gray shaded area). The two panels differ by the method: KerP with Gaussian kernel and bandwidth via the multivariate plug-in method of \cite{wand1994multivariate} (left), and DPM (right).}
        \label{fig:lidar_density0}
    \end{figure}

    \begin{figure}[!t]
        \begin{center}
            \includegraphics[scale=0.8]{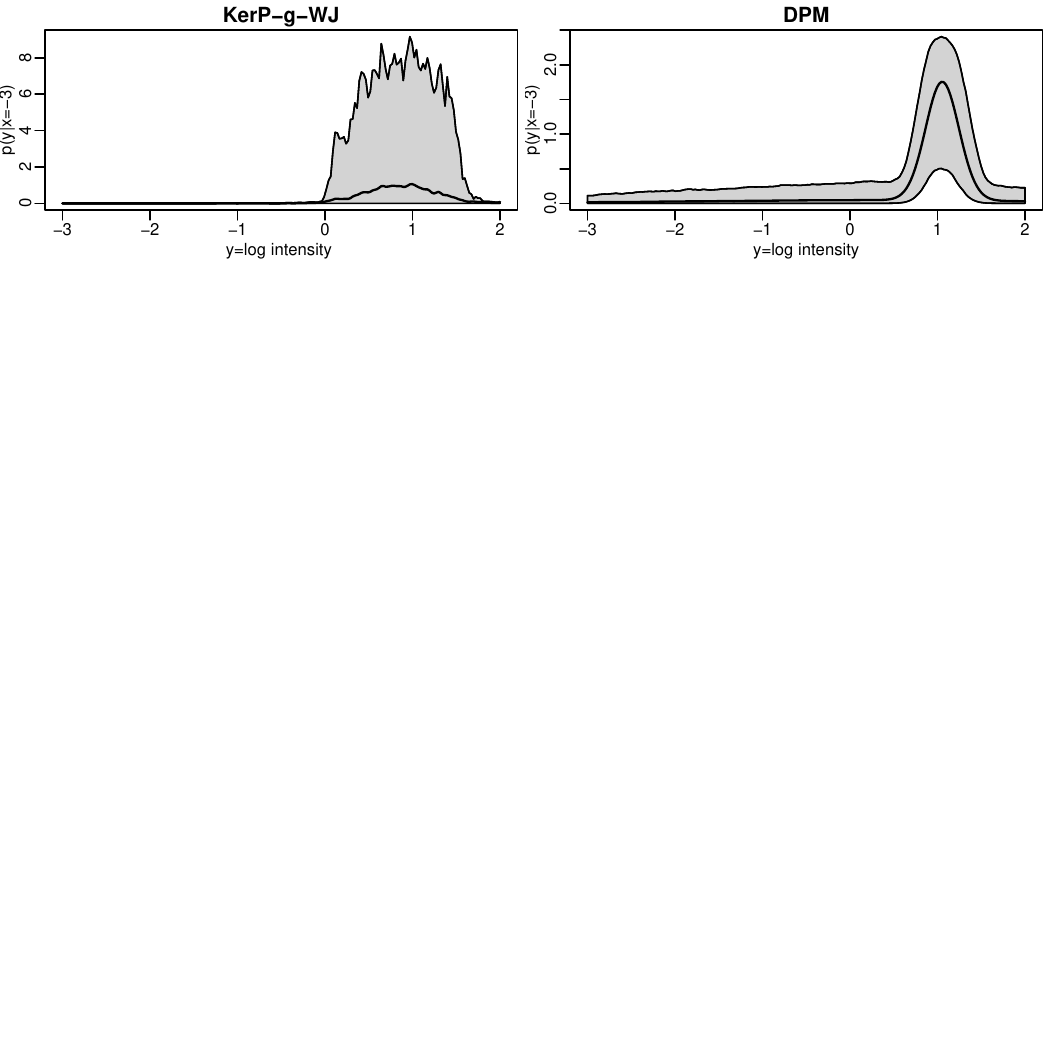}
        \end{center}
        \caption{\textbf{LIDAR: conditional density $p(y|x=-3$).} In each panel: posterior mean $\hat{p}$ (solid line), and $95\%$ credible regions (gray shaded area). The two panels differ by the method: KerP with Gaussian kernel and bandwidth via the multivariate plug-in method of \cite{wand1994multivariate} (left), and DPM (right). Note: the $y$-axis in the two panels is on a different scale.}
        \label{fig:lidar_density-3}
    \end{figure}

\end{document}